\documentclass[11pt]{amsart}
\usepackage[margin=1.2in]{geometry}
\usepackage{amsmath,amssymb,amsthm}
\usepackage[colorlinks=true,linkcolor=blue,citecolor=blue,urlcolor=blue]{hyperref}
\usepackage{graphicx}
\newtheorem{theorem}{Theorem}[section]
\newtheorem{proposition}[theorem]{Proposition}
\newtheorem{lemma}[theorem]{Lemma}
\newtheorem{corollary}[theorem]{Corollary}
\newtheorem{conjecture}[theorem]{Conjecture}
\theoremstyle{definition}
\newtheorem{definition}[theorem]{Definition}
\newtheorem{example}[theorem]{Example}
\newtheorem{question}[theorem]{Question}
\theoremstyle{remark}
\newtheorem{remark}[theorem]{Remark}

\newcommand{\Z}{\mathbb{Z}}
\newcommand{\Q}{\mathbb{Q}}

\newcommand{\R}{\mathbb{R}}
\newcommand{\C}{\mathbb{C}}
\newcommand{\Sph}{S^3}
\newcommand{\lk}{\operatorname{lk}}
\newcommand{\Fix}{\operatorname{Fix}}
\newcommand{\eqdot}{\doteq}
\newcommand{\qint}[1]{[#1]}
\newcommand{\Conway}{\nabla}
\newcommand{\eps}{\varepsilon}
\newcommand{\knotpoly}[1]{p_{#1}}

\usepackage{tikz}
\tikzset{every picture/.append style={line width=0.8pt}}
\newcommand{\wheel}[2][90]{%
\begin{tikzpicture}
  \def\rin{1.0}   
  \def\rout{2.0}  
  \fill[gray!15,even odd rule] (0,0) circle (\rin) (0,0) circle (\rout);
  \draw (0,0) circle (\rin);
  \draw (0,0) circle (\rout);
  \foreach \i in {0,...,\numexpr#2-1\relax}{%
    \pgfmathsetmacro{\spoke}{#1 + \i*360/#2}
    \draw (\spoke:\rin) -- (\spoke:\rout);
    \pgfmathsetmacro{\mid}{\spoke + 180/#2}
    \pgfmathsetmacro{\rot}{\mid}
    \pgfmathparse{int(mod(\i,2))}%
    \ifnum\pgfmathresult=0
      \node[rotate=\rot] at (\mid:{0.5*(\rin+\rout)}) {$\overline{R}$};
    \else
      \node[rotate=\rot] at (\mid:{0.5*(\rin+\rout)}) {$R$};
    \fi
  }
  \fill (0,0) circle (1.6pt);
  \node[anchor=north west,inner sep=1pt] at (2pt,-2pt) {$C_2$};
  \node at (0,{-\rout-0.55}) {$n=#2$};
\end{tikzpicture}}

\colorlet{annbg}{white}

\newcommand{\braidwheel}[3][90]{%
\begin{tikzpicture}
  \def\rin{0.7}
  \def\rout{2.5}
  \xdef\wordlen{0}%
  \foreach \g in {#3}{\pgfmathtruncatemacro{\tmp}{\wordlen+1}\xdef\wordlen{\tmp}}%
  \pgfmathsetmacro{\cellang}{360/#2}%
  \pgfmathsetmacro{\slot}{\cellang/\wordlen}%
  \foreach \i in {0,...,\numexpr#2-1\relax}{%
    \pgfmathsetmacro{\cellstart}{#1 + \i*\cellang}%
    \pgfmathtruncatemacro{\cellsign}{ifthenelse(mod(\i,2)==0,1,-1)}%
    \foreach \g [count=\jj from 0] in {#3}{%
      \pgfmathsetmacro{\ta}{\cellstart + \jj*\slot}%
      \pgfmathtruncatemacro{\geff}{\cellsign*\g}%
      \pgfmathtruncatemacro{\k}{abs(\geff)}%
      \pgfmathsetmacro{\rA}{\rin + 0.25*\k*(\rout-\rin)}%
      \pgfmathsetmacro{\rB}{\rin + 0.25*(\k+1)*(\rout-\rin)}%
      \pgfmathtruncatemacro{\spect}{ifthenelse(\k==1,3,1)}%
      \pgfmathsetmacro{\rS}{\rin + 0.25*\spect*(\rout-\rin)}%
      \draw (\ta:\rS) arc[start angle=\ta, end angle={\ta+\slot}, radius=\rS];
      \ifnum\geff>0
        \draw plot[domain=0:1,samples=21,variable=\t]
          ({\ta+\t*\slot}:{\rB + (\rA-\rB)*0.5*(1-cos(180*\t))});
        \draw[preaction={draw=annbg,line width=3.5pt}]
          plot[domain=0:1,samples=21,variable=\t]
          ({\ta+\t*\slot}:{\rA + (\rB-\rA)*0.5*(1-cos(180*\t))});
      \else
        \draw plot[domain=0:1,samples=21,variable=\t]
          ({\ta+\t*\slot}:{\rA + (\rB-\rA)*0.5*(1-cos(180*\t))});
        \draw[preaction={draw=annbg,line width=3.5pt}]
          plot[domain=0:1,samples=21,variable=\t]
          ({\ta+\t*\slot}:{\rB + (\rA-\rB)*0.5*(1-cos(180*\t))});
      \fi
    }%
  }%
\end{tikzpicture}}

\colorlet{gapbg}{white}

\pgfmathdeclarefunction{hvease}{3}{%
  \begingroup
  \pgfmathparse{ifthenelse(#2==1,
      ifthenelse(#3==1, 0.5*(1-cos(180*#1)), 1-cos(90*#1)),
      ifthenelse(#3==1, sin(90*#1), #1))}%
  \pgfmathsmuggle\pgfmathresult\endgroup}

\newcommand\dpiece[7]{%
  \pgfmathtruncatemacro{\exA}{1-#5}\pgfmathtruncatemacro{\exB}{1-#6}%
  \ifnum#7=1
    \draw[preaction={draw=gapbg,line width=\gw pt}]
      plot[domain=0:1,samples=25,variable=\t]
      ({\slotA + (#2+(#4-#2)*hvease(\t,#5,#6))*\slotW} :
       {\bandR + (#1+(#3-#1)*hvease(\t,\exA,\exB))*\bandW});
  \else
    \draw plot[domain=0:1,samples=25,variable=\t]
      ({\slotA + (#2+(#4-#2)*hvease(\t,#5,#6))*\slotW} :
       {\bandR + (#1+(#3-#1)*hvease(\t,\exA,\exB))*\bandW});
  \fi}

\newcommand\drawcrossing[1]{%
  \pgfmathtruncatemacro{\epse}{#1}
  \pgfmathsetmacro{\pxa}{ifthenelse(\mir==1,1-\rxa,\rxa)}%
  \pgfmathsetmacro{\pxb}{ifthenelse(\mir==1,1-\rxb,\rxb)}%
  \pgfmathsetmacro{\gw}{min(3.5, 9*min((\rxb-\rxa)*\bandW,
      (\ryb-\rya)*\slotW*0.01745*(\bandR+0.5*\bandW)))}%
  \ifnum\epse>0
    \dpiece{\pxb}{\rya}{\pxa}{\ryb}{\cfSE}{\cfNW}{0}
    \dpiece{\pxa}{\rya}{\pxb}{\ryb}{\cfSW}{\cfNE}{1}
  \else
    \dpiece{\pxa}{\rya}{\pxb}{\ryb}{\cfSW}{\cfNE}{0}%
    \dpiece{\pxb}{\rya}{\pxa}{\ryb}{\cfSE}{\cfNW}{1}%
  \fi}
\newcommand\tsig{\drawcrossing{1}}
\newcommand\tsigi{\drawcrossing{-1}}

\newcommand\THii[2]{%
  \pgfmathsetmacro{\xm}{0.5*(\rxa+\rxb)}%
  \begingroup\edef\rxb{\xm}\def\cfNE{1}\def\cfSE{1}#1\endgroup
  \begingroup\edef\rxa{\xm}\def\cfNW{1}\def\cfSW{1}#2\endgroup}
\newcommand\THiii[3]{%
  \pgfmathsetmacro{\xu}{\rxa+(\rxb-\rxa)/3}%
  \pgfmathsetmacro{\xv}{\rxa+2*(\rxb-\rxa)/3}%
  \begingroup\edef\rxb{\xu}\def\cfNE{1}\def\cfSE{1}#1\endgroup
  \begingroup\edef\rxa{\xu}\edef\rxb{\xv}%
    \def\cfNW{1}\def\cfSW{1}\def\cfNE{1}\def\cfSE{1}#2\endgroup
  \begingroup\edef\rxa{\xv}\def\cfNW{1}\def\cfSW{1}#3\endgroup}
\newcommand\TVii[2]{%
  \pgfmathsetmacro{\ym}{0.5*(\rya+\ryb)}%
  \begingroup\edef\ryb{\ym}\def\cfNW{0}\def\cfNE{0}#1\endgroup
  \begingroup\edef\rya{\ym}\def\cfSW{0}\def\cfSE{0}#2\endgroup}

\makeatletter
\def\inscount{\@ifnextchar[{\ins@count}{\ins@count[1]}}
\def\ins@count[#1]#2#3{%
  \pgfmathsetmacro{\tmpw}{\totw+#1}\xdef\totw{\tmpw}}
\def\insdraw{\@ifnextchar[{\ins@draw}{\ins@draw[1]}}
\def\ins@draw[#1]#2#3{%
  \pgfmathsetmacro{\slotA}{\curcellstart + \cumw*\cellang/\totw}%
  \pgfmathsetmacro{\slotW}{#1*\cellang/\totw}%
  \pgfmathtruncatemacro{\keff}{ifthenelse(\mir==1, 3-#2, #2)}%
  \pgfmathsetmacro{\bandR}{\rinX + \keff*(\routX-\rinX)/4}%
  \pgfmathsetmacro{\bandW}{(\routX-\rinX)/4}%
  \foreach \p in {1,2,3}{%
    \ifnum\p=\keff\else\ifnum\p=\numexpr\keff+1\relax\else
      \pgfmathsetmacro{\rS}{\rinX + \p*(\routX-\rinX)/4}%
      \draw (\slotA:\rS) arc[start angle=\slotA,
            end angle={\slotA+\slotW}, radius=\rS];
    \fi\fi}%
  \def\rxa{0}\def\rxb{1}\def\rya{0}\def\ryb{1}%
  \def\cfSW{0}\def\cfSE{0}\def\cfNW{0}\def\cfNE{0}%
  #3%
  \pgfmathsetmacro{\tmpw}{\cumw+#1}\xdef\cumw{\tmpw}}
\makeatother

\newcommand\hvwheel[3][90]{%
\begin{tikzpicture}
  \def\rinX{1.4}\def\routX{4.4}%
  \pgfmathsetmacro{\cellang}{360/#2}%
  \xdef\totw{0}%
  \begingroup\let\ins\inscount#3\endgroup
  \foreach \i in {0,...,\numexpr#2-1\relax}{%
    \pgfmathsetmacro{\curcellstart}{#1 + \i*\cellang}%
    \pgfmathtruncatemacro{\mir}{mod(\i,2)}%
    \xdef\cumw{0}%
    \let\ins\insdraw
    #3%
  }%
\end{tikzpicture}}
\def\JimCell{%
  \ins{1}{\TVii{\tsigi}{\tsigi}}%
  \ins{2}{\THii{\tsig}{\TVii{\tsig}{\tsig}}}%
  \ins[0.55]{1}{\THiii{\tsigi}{\tsigi}{\tsigi}}%
}

\title[A proof of the mod 4 Kawauchi Conjecture]{A proof of the mod 4 Kawauchi Conjecture}

\author{Jim Conant}
\thanks{An initial version of this manuscript was prepared in the course of an extended research conversation between Jim Conant and Claude Fable 5 (an AI assistant by Anthropic), though it has been extensively revised and edited. Every statement, computation, and citation has been verified by the author (with the exception of several computational examples in Appendix B).}

\begin{document}

\begin{abstract}
Kawauchi conjectured that the Conway polynomial of an amphicheiral knot factors as $\Conway_K(z)=f(z)f(-z)$ for some integer polynomial $f(z)$. In joint work with Hartley, he showed this was true for strongly amphicheiral knots, and Hartley used the JSJ decomposition of the knot exterior to generalize to all negative amphicheiral knots. Ermotti--Hongler--Weber were the first to publish a counterexample to the general case. Independently, in 2006 the author had conjectured a statement which is equivalent to the statement that  $\Conway_K \equiv f(z)f(-z) \pmod 4$ for amphicheiral knots, based on certain patterns he noticed in finite type invariants. In this paper we prove this mod 4 version of Kawauchi's original conjecture as a consequence of a stronger integral statement.
\end{abstract}

\maketitle

\section{Introduction}\label{sec:intro}

\subsection{The conjecture}
Recall that a knot $K\subset \Sph$ is \emph{amphicheiral} if it is ambient isotopic to its mirror image $\bar K$, \emph{positive} or \emph{negative} amphicheiral according to whether the isotopy can be chosen to preserve or reverse the orientation of the knot, and \emph{strongly} (positive/negative) amphicheiral if the mirror symmetry is realized by an orientation-reversing involution of the pair $(\Sph,K)$.

Write $\Conway_K(z)\in\Z[z^2]$ for the Conway polynomial, $w=z^2$, and $\Delta_K(t)$ for the Alexander polynomial, related by $\Delta_K(t)\eqdot \Conway_K(z)$ under $z=t^{1/2}-t^{-1/2}$, where $\eqdot$ means up to multiplication by units $t^{\pm k}$.

Hartley and Kawauchi \cite{HK} proved that a strongly positive amphicheiral knot has $\Delta_K(t)=f(t)^2$; equivalently $\Conway_K(z)=\varphi(z^2)^2$. Hartley \cite{H1} proved that every negative amphicheiral knot satisfies
\begin{equation}\label{eq:split}
\Conway_K(z)=f(z)f(-z), \qquad f\in\Z[z].
\end{equation}
Kawauchi \cite{Kaw} conjectured that \eqref{eq:split} holds for all amphicheiral knots; Ermotti, Hongler and Weber \cite{EHW} produced a positive amphicheiral counterexample. On the other hand, in 2006 the author \cite{Con} made a conjecture, arising from the study of finite-type invariants modulo 2, equivalent to the following weakening.

\begin{conjecture}[\cite{Con}]\label{conj:main}
For every amphicheiral knot $K$ there is $f\in\Z[z]$ with
\[
\Conway_K(z)\;\equiv\; f(z)f(-z) \pmod 4 .
\]
\end{conjecture}

Conjecture~\ref{conj:main} was verified in \cite{CM} for periodically amphicheiral knots built from braids where the symmetry preserves the braid structure; that paper also proved a reduction theorem following an argument of Hartley\cite{H1} showing that the general case follows from the case of hyperbolic positive amphicheiral knots, whose orientation-reversing symmetry may be taken to be of finite order by Mostow rigidity.

\subsection{Main results}
The present paper closes the remaining case and, in doing so, proves an integral statement which is strictly stronger than the mod-$4$ congruence and which explains the appearance of the modulus $4$.

\begin{figure}
\wheel{8}\qquad \wheel{4}\qquad \wheel{2}
\caption{Template for positive amphicheiral knots invariant under standard rotary reflection of order $n=2^a$. $R$ is a tangle, and $\overline R$ is the reflection of $R$ through the plane of the paper.}\label{fig:stdform}
\end{figure}

Throughout, fix the standard model of an orientation-reversing finite symmetry: for $a\geq 2$ and odd $c$, the \emph{rotary reflection}
\[
\rho\colon \Sph\subset\C^2\to\Sph,\qquad \rho(z_1,z_2)=\big(e^{2\pi i c/2^a}z_1,\ \bar z_2\big),
\]
of order $2^a$, with \emph{reflected axis} $C_2=\{z_1=0\}=\Fix(\rho^2)$. This can be visualized by taking stereographic projection along $\operatorname{Re}(z_2)$, so that the symmetry in $\mathbb R^3$ rotates the first two coordinates around the $z$ axis and reflects the third through the plane of the paper, the $xy$-plane.

In \S\ref{sec:rotary} we show (using geometrization of finite group actions \cite{DL}) that every finite-order orientation-reversing symmetry of a nontrivial knot which preserves the knot's orientation reduces, after passing to an odd power and conjugating, either to this model or to the strongly positive amphicheiral case, and that the assumption $a\geq2$ implies $K\cap C_2=\emptyset$ automatically. 

See Figure~\ref{fig:stdform} for a schematic of knots invariant under rotary reflections. \cite{CM} proved Conjecture~\ref{conj:main} in the case that the tangle $R$ is a braid.

Let $\ell=\lk(K,C_2)$, oriented so $\ell>0$. Then $\ell$ must be odd (Lemma~\ref{lem:elleven}). Define the quantum integer, with its oft-needed sign as
\[
\qint{\ell}\;=\;\frac{t^{\ell/2}-t^{-\ell/2}}{t^{1/2}-t^{-1/2}}\;\in\;\Z[w],
\qquad \eps_\ell=(-1)^{(\ell-1)/2},
\]
so $\qint{\ell}_{z=0}=\ell$. The expression $\eps_\ell[\ell]$ will be referred to as a \emph{signed quantum integer}.

The set up is a \emph{quotient tower}. The involution $r=\rho^{2^{a-1}}$ is the $\pi$-rotation about $C_2$; quotienting by $r$ is a double branched cover of $\Sph$ along the unknot $C_2$ and reproduces the standard configuration with $a$ replaced by $a-1$ (Proposition~\ref{prop:tower}). Iterating produces knots
\[
K=K_a \longrightarrow K_{a-1}\longrightarrow\cdots\longrightarrow K_1,
\]
each invariant under a rotary reflection of order $2^k$ with the same axis and the same linking number $\ell$, the base $K_1$ being strongly positive amphicheiral. Let $L_k=K_k\cup C_2$ with two-variable Alexander polynomial $\Delta_{L_k}(t,y)$, where $t,y$ correspond to the meridians of $K_k$ and $C_2$.

\begin{theorem}[Palindromic normal form; Theorem~\ref{thm:normalform}]\label{thm:A}
For $1\le k\le a-1$ there is $Q_k\in\Z[u,v]$ with
\[
\Delta_{L_k}(t,y)\;\eqdot\; Q_k\big(t+t^{-1},\,y+y^{-1}\big),
\]
normalized so that $Q_k(w+2,\,2)=\qint{\ell}\,\Conway_{K_k}(w)$ \textup{(}note $t+t^{-1}=w+2$\textup{)}.
\end{theorem}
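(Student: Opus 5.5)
Since $K_k\cup C_2$ is a two-component link, $\Delta_{L_k}(t,y)$ is a genuine Laurent polynomial. The plan is to prove two symmetries for it and then read off the normalization from the Torres formula.

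The first symmetry is Torres symmetry, which holds for every link:
\[
\Delta_{L_k}(t^{-1},y^{-1})\eqdot\Delta_{L_k}(t,y).
\]
The second comes from the rotary reflection $\rho_k$ of order $2^k\ge 2$ that $K_k$ carries. For $k\ge 2$ this is the standard model; for $k=1$ the level-one symmetry still preserves $C_2$ setwise. The map $\rho_k$ reverses the orientation of $\Sph$ and preserves the orientation of $K_k$. Since $\ell=\lk(K_k,C_2)$ must be preserved up to the sign $\pm$ induced by orientation reversal, $\rho_k$ reverses the orientation of $C_2$.

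An orientation-reversing homeomorphism of $\Sph$ carrying $L_k$ to itself therefore gives
\[
\Delta_{L_k}(t,y)\eqdot\Delta_{L_k}(t^{\pm1},y^{\mp1})\cdot(\text{sign}).
\]
The reason is that $\rho_k$ maps meridians to meridians with orientation twisted according to whether the ambient and component orientations are preserved. On the meridian of $K_k$ it acts by $t\mapsto t^{-1}$: ambient orientation is reversed and the knot orientation is kept. On the meridian of $C_2$ it acts by $y\mapsto y$: both orientations are reversed. Hence
\[
\Delta_{L_k}(t,y)\eqdot\Delta_{L_k}(t^{-1},y).
\]
Combining this with Torres symmetry also gives $\Delta_{L_k}(t,y)\eqdot\Delta_{L_k}(t,y^{-1})$.

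The next step is to turn these $\eqdot$ statements into exact invariance. We choose the representative $\Delta=t^{\alpha}y^{\beta}P$ that is centered, in the sense that its Newton polygon is symmetric under $(t,y)\mapsto(t^{-1},y^{-1})$. This uses the Torres/Conway-potential refinement: the Conway potential function is exactly symmetric up to the sign $(-1)^{\mu}$. A symmetric Newton polygon forces $\Delta(t^{-1},y)=\pm\Delta(t,y)$ exactly. If the sign is $-1$, then $\Delta$ vanishes at $t=1$. However, the Torres formula gives
\[
\Delta_{L_k}(1,y)\eqdot\frac{y^{\ell}-1}{y-1}\,\Delta_{C_2}(y)=\frac{y^\ell-1}{y-1}\neq0.
\]
So the sign is $+1$, and likewise for $y\mapsto y^{-1}$.

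One subtlety must be handled here. Exact invariance under $t\mapsto t^{-1}$ requires the exponents in $t$ to be integers, which is a parity condition. In the Conway potential, half-integer exponents can appear, and the claim that $Q_k$ lies in $\Z[u,v]$ needs the $t$-degrees to lie in $\Z$ after centering. I expect this to follow from $\ell$ being odd (Lemma~\ref{lem:elleven}) together with the potential function $\nabla(t,y)$ having the parity $(t^{1/2}y^{1/2})^{\ell-1}$ up to the usual correction.

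I expect this parity bookkeeping to be the main obstacle. Getting exact rather than up-to-units invariance, with integral exponents in both variables, is the delicate part. I would try to handle it with Hartley's Conway potential formalism: show $\nabla_{L_k}(t^{-1},y)=\nabla_{L_k}(t,y)$ exactly, then define $\Delta:=(t^{1/2}-t^{-1/2})(y^{1/2}-y^{-1/2})\nabla$ divided appropriately, and check its parity.

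Once $\Delta$ is a Laurent polynomial invariant under both inversions independently, it is a polynomial in $t+t^{-1}$ and $y+y^{-1}$. This is the standard symmetric-function fact, applied one variable at a time. That gives $Q_k$.

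For the normalization we use the potential form of Torres, $\nabla_L(t,1)$ relating to $[\ell]\nabla_K(t)$. At $y=1$ we have $v=y+y^{-1}=2$, and the Torres formula gives
\[
Q_k(t+t^{-1},2)\eqdot\frac{t^{\ell}-1}{t-1}\,\Delta_{K_k}(t).
\]
With the centered representatives and the symmetric normalization of $[\ell]$, this becomes $[\ell]\,\Conway_{K_k}(w)$ up to sign. We then fix the overall sign of $Q_k$ so that the value at $w=0$ is $+\ell$, using $\Conway_{K_k}(0)=1$ and $[\ell]_{z=0}=\ell$.
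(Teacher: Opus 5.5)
Your proposal is correct. It differs from the paper's proof at exactly the point you flagged as the obstacle: the parity of the centering shifts.

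\textbf{The paper's route.} It starts from the same two $\eqdot$-symmetries as you do: Proposition~\ref{prop:symmetry} and Torres duality. Setting $y=1$ kills the sign and the $t$-shift of the axis palindromicity. It then proves the $y$-shift $m$ is even by setting $y=-1$ and invoking $\Delta_{L_k}(t,-1)\neq0$ (Corollary~\ref{cor:nonvanishing}). That nonvanishing comes from the covering factorization at level $k+1$, which is the only reason the theorem is stated for $k\le a-1$. The $t$-parity is obtained separately, from $q(t,1)=\pm t^{\beta}\qint{\ell}\knotpoly{k}$ with $\qint{\ell}\knotpoly{k}$ symmetric.

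\textbf{Your route.} You never use the tower: the parity in both variables comes from the Torres specializations together with $\ell$ odd. So your argument proves the normal form at every level, including $k=a$, i.e.\ for $K\cup C_2$ itself. (The appendix's $x^2-((T-2)^2-2)x+1$ is indeed palindromic.)

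\textbf{Closing your parity step.} You do not need Hartley's potential function for the step you left as an expectation. For a two-component link the potential function is just a symmetrized $\Delta_L$ in the square-root variables, with no factor $(t^{1/2}-t^{-1/2})(y^{1/2}-y^{-1/2})$. An elementary argument suffices.
\begin{itemize}
\item Your centered representative $D$ has its $y$-exponents in a single coset of $\Z$ inside $\tfrac12\Z$, and is exactly invariant under each inversion separately.
\item By Torres, $D(1,y)=\pm y^{g}\,\frac{y^{\ell/2}-y^{-\ell/2}}{y^{1/2}-y^{-1/2}}$. The quotient is symmetric with integral exponents because $\ell$ is odd.
\item Exact symmetry then forces $g=0$. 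Since $D(1,y)\neq0$, the coset of $y$-exponents must be $\Z$ itself.
\item The same argument with $D(t,1)=\pm t^{f}\qint{\ell}\Conway_{K_k}$ handles $t$. This is precisely the paper's ``$\alpha'$ is even'' step.
\end{itemize}

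\textbf{What each route buys.} The paper gains only economy of means: it stays with $\eqdot$-statements plus nonvanishing facts it needs later anyway. Your version is more self-contained, and slightly more general because it also covers the top level $k=a$.
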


The $y$-palindromicity here is \emph{not} a formal property of link polynomials: it is the composite of Torres duality $\Delta(t^{-1},y^{-1})\eqdot\Delta(t,y)$ with the semilinear symmetry $\Delta(t^{-1},y)\eqdot\Delta(t,y)$ induced by the rotary reflection itself, which reverses the ambient orientation and the axis while preserving the knot. However, this palindromicity allows for a cute fact modulo $4$. Standard covering space analysis applied to the double cover around the axis $C_2$  implies that $\Delta_{L_k}(t,y^2)=\Delta_{L_{k-1}}(t,y)\Delta_{L_{k-1}}(t,-y)$. Substituting $y=1$ recovers the knot up to a factor of $[\ell]$, and $(y^p+y^{-p})|_{y=1}\equiv (y^p+y^{-p})_{y=-1}\mod 4$, so that we have $$\Conway_{K_k}(z) \equiv \varepsilon_\ell[\ell] (\Conway_{K_{k-1}}(z))^2 \mod 4.$$

This expression factors mod 4 as $f(z)f(-z)$ due to a cyclotomic identity of potential independent interest (Lemma~\ref{lem:qint}): for every odd $\ell$ there is $B\in\Z[\xi]$, $\xi^2=w+4$, with
\[
B(\xi)\,B(-\xi)\;=\;\eps_\ell\,\qint{\ell}
\]
\emph{exactly over $\Z$}; reducing mod $4$, where $\xi^2\equiv z^2$, this exhibits $\eps_\ell\qint{\ell}$ as $f(z)f(-z)$. This is the second place in which the modulus $4$ is needed: 
modulo $4$, the two quadratic twists $\Z[z]\subset\Z[s^{\pm1}]\supset\Z[\xi]$ (where $z=s-s^{-1}$, $\xi=s+s^{-1}$, $s=t^{1/2}$) become indistinguishable: $\xi^2=z^2+4\equiv z^2$.
This neatly establishes Conjecture~\ref{conj:main} for knots invariant under $2^a$-rotary reflections, but we can do better:

\begin{theorem}[Corollary~\ref{cor:order4}]\label{cor:intro-order4}
If $K$ is invariant under an orientation-reversing symmetry of order $2^a$ ($a\geq 2$) preserving the orientation of $K$, then there are $M,E\in\Z[w]$ with $M(0)$ odd and
\[
\varepsilon_\ell\qint{\ell}\,\Conway_K \;=\; M(w)^2 - 4\,E(w)^2
\]
exactly over $\Z$. 
\end{theorem}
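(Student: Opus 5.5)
The plan is to apply the covering relation once, at the top of the quotient tower, and to use the palindromic normal form (Theorem~\ref{thm:A}) one level down, at $k=a-1$. Let $L_a=K\cup C_2$. The double branched cover along $C_2$ that takes $K=K_a$ to $K_{a-1}$ gives
\[
\Delta_{L_a}(t,y^2)\;\eqdot\;\Delta_{L_{a-1}}(t,y)\,\Delta_{L_{a-1}}(t,-y),
\]
as recalled in the introduction (Proposition~\ref{prop:tower}). The Torres condition gives $\Delta_{L_a}(t,1)\eqdot\qint{\ell}\Delta_K(t)$, so setting $y=1$ gives
\[
\qint{\ell}\,\Conway_K(w)\;\eqdot\;\Delta_{L_{a-1}}(t,1)\,\Delta_{L_{a-1}}(t,-1).
\]
Theorem~\ref{thm:A} applies at level $k=a-1$ because $a-1\ge 1$.

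Next, write $u=t+t^{-1}=w+2$ and $v=y+y^{-1}$. By Theorem~\ref{thm:A}, $\Delta_{L_{a-1}}(t,y)\eqdot Q_{a-1}(u,v)$ with $Q_{a-1}\in\Z[u,v]$. Split $Q_{a-1}$ into its even and odd parts in $v$:
\[
Q_{a-1}(u,v)=A(u,v^2)+v\,B(u,v^2),\qquad A,B \text{ integral}.
\]
Replacing $y$ by $-y$ replaces $v$ by $-v$, so the product above becomes $A(u,v^2)^2-v^2B(u,v^2)^2$. At $y=1$ we have $v=2$, and the product is $A(u,4)^2-4B(u,4)^2$. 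Set
\[
M(w)=A(w+2,4),\qquad E(w)=B(w+2,4).
\]
Both lie in $\Z[w]$, and we obtain $\pm\,\qint{\ell}\,\Conway_K=M^2-4E^2$, up to a unit still to be determined.

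The step I expect to need the most care is pinning down this unit, since $\eqdot$ allows factors $\pm t^iy^j$. Powers of $t$ and $y$ are harmless. Both sides are symmetric under $t\mapsto t^{-1}$: the left because $\Conway_K\in\Z[w]$ and $\qint{\ell}\in\Z[w]$, the right because it is a polynomial in $u$. Hence after the substitution $y=\pm1$ any leftover monomial factor must be $\pm1$. However, $y^j$ evaluated at $y=-1$ contributes a sign, so the sign really must be computed rather than ignored.

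To fix the sign, evaluate at $w=0$ ($t=1$). The left side becomes $\pm\ell\,\Conway_K(0)=\pm\ell$. The right side is $M(0)^2-4E(0)^2$. Since $\ell$ is odd (Lemma~\ref{lem:elleven}), $M(0)$ must be odd, and then $M(0)^2-4E(0)^2\equiv1\pmod 4$. So the sign is the one making $\pm\ell\equiv1\pmod4$, which is exactly $\eps_\ell=(-1)^{(\ell-1)/2}$. This gives
\[
\eps_\ell\qint{\ell}\,\Conway_K=M(w)^2-4E(w)^2,
\]
exactly over $\Z$, with $M(0)$ odd, which is the statement of the corollary.
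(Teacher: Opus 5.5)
Your proof is correct and is essentially the paper's. The paper also specializes Proposition~\ref{prop:covering} at $y=1$ to get $\eps_\ell\qint{\ell}\Conway_K=Q_{a-1}(u,2)\,Q_{a-1}(u,-2)$, and its $M=\tfrac12\big(Q_{a-1}(u,2)+Q_{a-1}(u,-2)\big)$ and $E=\tfrac14\big(Q_{a-1}(u,2)-Q_{a-1}(u,-2)\big)$ are exactly your $A(u,4)$ and $B(u,4)$. The differences are cosmetic: the paper fixes the sign through the telescoping theorem ($R_{a-1}(0)=\pm1$ together with $R_{a-1}(0)\equiv\ell\pmod 4$) rather than your direct observation that $M(0)^2-4E(0)^2\equiv1\pmod4$, and the covering factorization you quote is Proposition~\ref{prop:covering}, not Proposition~\ref{prop:tower}.
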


By analyzing the Alexander polynomial of a general positive amphicheiral knot using the JSJ decomposition, we are able to prove the following restriction on the Conway polynomials of general positive amphicheiral knots over the integers.

\begin{theorem}[Theorem~\ref{thm:conjecture}]\label{thm:D}
For any positive amphicheiral knot, we have an exact integer identity 
\[\left(\prod_i \eps_{\ell_i}[m_i\ell_i]\right)\Conway_K=\left(\prod_i[m_i]\right)\left(M^2-4E^2\right),\] for some odd positive integers $\ell_i,m_i$.  
\end{theorem}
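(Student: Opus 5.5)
We prove Theorem~\ref{thm:D} by a JSJ reduction in the spirit of Hartley's argument \cite{H1}, as refined in \cite{CM}. The basic input is Theorem~\ref{cor:intro-order4} for the finite-order pieces. The key structural fact we need is that the set of Conway polynomials of the form $\left(\prod_i[m_i]\right)(M^2-4E^2)/\prod_i\eps_{\ell_i}[m_i\ell_i]$ is closed under multiplication, since
\[
(M_1^2-4E_1^2)(M_2^2-4E_2^2)=(M_1M_2+4E_1E_2)^2-4(M_1E_2+E_1M_2)^2 .
\]
This is the Brahmagupta identity for the norm form of $\Z[\sqrt{4}]$. Oddness of $M(0)$ is also preserved. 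So it suffices to decompose $\Conway_K$ multiplicatively into pieces, each of the required shape.

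First, we decompose the exterior of $K$ along its JSJ tori. The amphicheiral symmetry $h$ acts on the JSJ tree, and by the standard argument it fixes a root vertex $V$ (the piece containing $\partial N(K)$ cannot be swapped). Satellite formulas then express $\Conway_K$ as a product of the Alexander polynomial of the root pattern evaluated at the winding numbers, times Conway polynomials of companions substituted at $t^{m}$.

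Companions come in two kinds.
\begin{itemize}
\item Companions permuted freely by $h$ occur in orbits. For an orbit of length $2$, $h$ maps a companion $J$ to its mirror, contributing $\Conway_J(z)\Conway_{\bar J}(z)$ (suitably substituted), which is a square $N^2=N^2-4\cdot 0^2$. For an orbit of even length greater than $2$, pairing gives squares as well. Orbits of odd length force the stabilizer to contain an orientation-reversing element, so they reduce to the next case.
\item Companions fixed by a power of $h$ are themselves positive amphicheiral knots (with smaller JSJ complexity), handled by induction. Here the substitution $t\mapsto t^{m}$ for odd winding number $m$ must be tracked.
\end{itemize}

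Substituting $t\mapsto t^m$ into an identity $\eps_\ell[\ell]\Conway=M^2-4E^2$ turns $[\ell]$ into $[\ell]_{t^m}=[m\ell]/[m]$, and $M,E$ remain polynomials in $w$ because $w(t^m)=[m]^2w$. This is exactly the origin of the factors $[m_i\ell_i]$ and $[m_i]$ in the statement. We also need $\eps_\ell$ to be unchanged, which holds since $m$ is odd.

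Second, we treat the root piece $V$.
\begin{itemize}
\item If $V$ is hyperbolic, then by Mostow rigidity $h$ is isotopic to a finite-order map. By \S\ref{sec:rotary} this reduces to either a rotary reflection of order $2^a$ or the strongly positive amphicheiral case. In the first case, Theorem~\ref{cor:intro-order4} (for the associated link or pattern) applies. In the second, Hartley--Kawauchi gives a square, with $\ell=1$.
\item If $V$ is Seifert fibered, it is a composing space or a cable space. Connected sum multiplies Conway polynomials, so multiplicativity handles the composing space. A cable space pattern has Conway polynomial $[m]$-type torus-knot factors, which are never amphicheiral unless trivial, so they occur paired with mirrors and again produce squares.
\end{itemize}

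The main obstacle is the hyperbolic root with companions attached. Here the finite symmetry acts on a link $K\cup(\text{companion tori cores})$, not a knot, so we need a version of Theorem~\ref{cor:intro-order4} for the pattern link evaluated at the winding numbers. I would first try to rerun the quotient tower argument of Theorem~\ref{thm:A} for the multivariable Alexander polynomial of the pattern link with variables specialized $t_j\mapsto t^{m_j}$. The palindromic normal form should persist, since the symmetry and Torres duality act compatibly on all variables. If so, the mod-$4$/Brahmagupta argument yields $M^2-4E^2$ with the denominators $[m_i]$ appearing from Torres-type factors.
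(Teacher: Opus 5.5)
Your overall architecture is the paper's (Steps 1--4 of the proof of Theorem~\ref{thm:conjecture}). It has the same ingredients: an equivariant companionship factorization, non-fixed companions pairing into squares, fixed companions being positive amphicheiral, the substitution $t\mapsto t^{m}$ sending $[\ell]\mapsto[m\ell]/[m]$ and $w\mapsto[m]^2w$, and Brahmagupta closure. But the argument is not complete. The case you single out as the main obstacle, a hyperbolic root piece with companions attached, is left conditional on an unproven multivariable version of the quotient tower (``should persist'', ``if so''). That is exactly the step that must produce the inputs to Corollary~\ref{cor:order4}.

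The obstacle comes from writing the root contribution as the multivariable polynomial of the pattern link evaluated at the winding numbers. Use instead the one-variable satellite formula $\Delta_K(t)\eqdot\Delta_{K_0}(t)\prod_j\Delta_{J_j}(t^{n_j})$. Here $K_0$ is obtained by untying the companions, i.e.\ refilling each companion exterior by a solid torus whose meridian is that companion's longitude. The finite-order isometry of the root supplied by Mostow rigidity is homotopic to $h$. So it permutes the companion tori as $h$ does, carries companion longitudes to companion longitudes (the slopes null-homologous in the companion exteriors, which $h$ permutes), and preserves the meridian slope of $K$. Hence it extends over the filling solid tori to a finite-order, orientation-reversing diffeomorphism of $S^3$ preserving $K_0$ with its orientation.

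Lemma~\ref{lem:taxonomy}, followed by Lemma~\ref{lem:HKsquare} or Corollary~\ref{cor:order4}, then applies to the \emph{knot} $K_0$ directly, with $m=1$; a trivial $K_0$ contributes $1$. The cores of the filling tori never enter. This is the role of the paper's atoms $K_i$, which it obtains from Hartley's analysis and \cite[Thm.~2.6]{CM}. In particular, winding numbers enter only through the companion factors $\Delta_J(t^{n})$. The $[m_i]$ arise solely from the substitution step, as you correctly said earlier, not from ``Torres-type factors'' at the root.

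Two smaller points. First, the statement requires the $m_i$ to be odd, since this is what puts $[m_i]$ and $[m_i\ell_i]$ in $\Z[w]$. You assume this without argument; the paper takes it from \cite{H1}. Second, a cable-space root cannot be ``paired with mirrors'', because the root is fixed by $h$. Rather, $h$ extends over the refilled companion to an orientation-reversing symmetry of the untied pattern, a torus knot. That torus knot is therefore amphicheiral, hence trivial, and contributes $1$.
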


A little algebra gives the mod $4$ Kawauchi conjecture as a corollary.
\begin{corollary}
The Conway polynomial of every amphicheiral knot is congruent to $f(z)f(-z)$ modulo $4$.
\end{corollary}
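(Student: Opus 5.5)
The plan is to derive the corollary from Theorem~\ref{thm:D} together with Hartley's theorem for negative amphicheiral knots. The first step is the case split. If $K$ is negative amphicheiral, Hartley \cite{H1} already gives $\Conway_K=f(z)f(-z)$ exactly, so there is nothing to prove. Otherwise $K$ is positive amphicheiral, and Theorem~\ref{thm:D} gives
\[
\Big(\prod_i \eps_{\ell_i}[m_i\ell_i]\Big)\Conway_K=\Big(\prod_i[m_i]\Big)(M^2-4E^2).
\]
It then remains to turn this identity into a mod-$4$ factorization of $\Conway_K$ itself.

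The second step is to handle the factor $M^2-4E^2$. Reducing mod $4$ gives $M^2-4E^2\equiv M(w)^2$. Since $M\in\Z[w]$ with $w=z^2$, we have $M(z^2)=M((-z)^2)$, so $M^2$ already has the form $f(z)f(-z)$ with $f(z)=M(z^2)$. For the quantum-integer factors, Lemma~\ref{lem:qint} supplies $B$ with $B(\xi)B(-\xi)=\eps_\ell[\ell]$, and modulo $4$ we have $\xi^2\equiv z^2$, so each $\eps_\ell[\ell]$ is congruent to some $g(z)g(-z)$. Any product of expressions of the form $f(z)f(-z)$ is again of that form. Note that $[m\ell]$, $[m]$ and $\eps_{m\ell}$ are all available, and that $\eps_{m\ell}=\eps_m\eps_\ell$ for odd $m,\ell$.

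The main obstacle is the third step: dividing out the quantum-integer factors modulo $4$. We cannot divide by $[m_i\ell_i]$ in $\Z[w]/4$, since it is not a unit (its constant term is odd, but it has positive degree). I would instead work in the power series ring $(\Z/4)[[w]]$, where every series with odd constant term is a unit. Set
\[
A=\prod_i \eps_{\ell_i}[m_i\ell_i],\qquad C=\prod_i[m_i].
\]
Both have odd constant term, so $\Conway_K\equiv C A^{-1}M^2 \pmod 4$ in $(\Z/4)[[w]]$.

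To show this is of the form $f(z)f(-z)$, observe that for $h\in\Z[w]$ with odd constant term, $h(w)\equiv \pm h(w)^{-1}\cdot h(w)^2$, so up to squares $h$ and $h^{-1}$ are interchangeable. Hence $CA^{-1}\equiv CA\cdot(A^{-1})^2$, and it suffices that $CA$ is congruent to a product $g(z)g(-z)$ times a square of a power series in $w$. The sign $\prod_i\eps_{\ell_i}$ needs care: I would absorb it using the identity $\eps_{m}[m]\,\eps_{\ell}[\ell]$ versus $\eps_{m\ell}[m\ell]$, i.e. group the factors as $\prod_i\eps_{m_i\ell_i}[m_i\ell_i]\cdot\prod_i\eps_{m_i}[m_i]$. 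Each group is a product of signed quantum integers, and the first step shows each of these is $\equiv g(z)g(-z)$.

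Finally, I need to pass from a power-series identity back to polynomials. If $\Conway_K\equiv F(z)F(-z)$ with $F\in(\Z/4)[[\ldots]]$, I would argue that $F$ can be taken to be a polynomial. This is the delicate point. The route I would try first is to note that the even part $F_e$ and odd part $F_o$ satisfy $F(z)F(-z)=F_e^2-F_o^2$, and then truncate $F_e$ and $F_o$ at high degree, using that $\Conway_K$ is a polynomial. If that fails, the fallback is to ask whether the paper's $E$ and $M$ already absorb the quotient, i.e. to extract a polynomial identity $A\,\Conway_K=C\,(\ldots)$ and clear denominators. That would require $A$ to divide $C(M^2-4E^2)$ suitably, with the factors $[m_i]$ dividing $[m_i\ell_i]$.
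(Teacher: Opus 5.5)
Steps 1 and 2 and your power-series manipulation are sound. You invert $A$ in $(\Z/4)[[w]]$, write $CA^{-1}=CA\cdot(A^{-1})^2$, and regroup $CA=\prod_i\eps_{m_i\ell_i}[m_i\ell_i]\cdot\eps_{m_i}[m_i]$; this is the same sign bookkeeping the paper uses. Together these correctly give $\Conway_K\equiv F(z)F(-z)\pmod 4$ for some $F\in(\Z/4)[[z]]$.

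\textbf{The gap.} The last step, which you flag but do not close, is a genuine gap. Truncating $F$ (or $F_e,F_o$) at degree $N$ gives a product that agrees with $\Conway_K$ only modulo $z^{N+1}$. Nothing in your argument controls the coefficients of degree $>N$ in the truncated product, and the polynomiality of $\Conway_K$ alone does not control them either. Your fallback is circular. $A$ trivially divides $C(M^2-4E^2)$ exactly, since the quotient is $\Conway_K$. The real question is whether $X\in(\Z/4)[w]$ with $sX\in S$ and $s\in S$ forces $X\in S$.

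\textbf{The missing idea.} What you need is the mod-$2$ rigidity behind Lemma~\ref{lem:rigidity}: modulo $4$, $f(z)f(-z)$ depends only on $f\bmod 2$. Indeed, $(f+2G)(z)\,(f+2G)(-z)=f(z)f(-z)+2\big(h(z)+h(-z)\big)+4G(z)G(-z)$ with $h(z)=f(z)G(-z)$, and $h(z)+h(-z)$ has even coefficients. This works verbatim when $G$ is a power series.

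Now reduce your identity mod $2$. This gives $\bar F^{\,2}=\overline{\Conway_K}$ in $\mathbb{F}_2[[z]]$. Squaring in characteristic $2$ sends $\sum a_iz^i$ to $\sum a_iz^{2i}$, so $\bar F$ must be a polynomial. Hence any polynomial lift $f$ of $\bar F$ satisfies $\Conway_K\equiv f(z)f(-z)\pmod 4$. So truncation at $N\ge\deg\bar F$ does succeed, but only because the discarded coefficients are even.

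\textbf{Comparison with the paper.} With this patch your route is a valid alternative to the paper's, which never leaves polynomials. The paper multiplies part (4) of Theorem~\ref{thm:conjecture} by $\prod_i\eps_{m_i}$ to get $s\,\Conway_K\in S$, where $s=\prod_i\eps_{m_i\ell_i}[m_i\ell_i]\in S$. It then cancels $s$ using Lemma~\ref{lem:reduce}. That lemma rests on the same rigidity, together with unique factorization in $\mathbb{F}_2[z]$ and the fact that $s$ (having an odd coefficient) is not a zero divisor in $(\Z/4)[z]$. Your inversion in $(\Z/4)[[w]]$ replaces the unique-factorization and zero-divisor argument. The mod-$2$ rigidity, however, is the one ingredient neither route can avoid.
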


\subsection{Examples}\label{subsec:evidence}
Conway polynomials for an infinite class of braid examples with order 4 symmetry are computed in \cite{CM}, while the Ermotti--Hongler--Weber knot forms a non-braid order $4$ example.
Both can be tested against Theorem~\ref{cor:intro-order4}. The \cite{CM} examples have
\[
\Conway_K = X\big(4-X(w+3)\big), \qquad X=F_n^2F_m^2,
\]
($F_j$ = Fibonacci polynomials, $n,m$ odd), where we notice $[3]=w+3$. Let $T=\qint3 X$. We have $-[3]\Conway_K=T(4-T)=(T-2)^2-4$, which we recognize as an instance of Theorem~\ref{cor:intro-order4} with $E=1$. We reprove this result in the appendix, as well as giving some $5$ strand braid computations where $E\neq 1$ becomes possible. The Ermotti--Hongler--Weber knot, with $\ell=3$, also satisfies Theorem~\ref{cor:intro-order4} on the nose, with explicit $M,E$ recorded in \S\ref{sec:examples}.

\subsection{More detail on the $2^a$ case}
Although
we already have enough to prove the mod $4$
Kawauchi conjecture and its integral strengthening, the Conway polynomials of order $2^a$ periodically positive amphicheiral knots have a lot more structure, which we hint at in the following three theorems. Recall that $Q_k$ is the palindromic Conway-like normalization of the 2-variable Alexander polynomial of the link $L_k$.

\begin{theorem}[Theorem~\ref{thm:telescope}]\label{thm:B}
Set $\widehat R_k(w) := \eps_\ell\, Q_k(w+2,\,-2)\in\Z[w]$. Then $\widehat R_k(0)=1$, and
\[
\Conway_{K_{k+1}} \;=\; \Conway_{K_k}\cdot \widehat R_k \quad (1\le k\le a-1),
\qquad\text{hence}\qquad
 \Conway_K \;=\; \varphi(w)^2\,\prod_{k=1}^{a-1}\widehat R_k(w)
\]
exactly in $\Z[w]$, where $\Conway_{K_1}=\varphi^2$ is the Hartley--Kawauchi square of the base of the tower.
\end{theorem}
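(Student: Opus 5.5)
We want $\Conway_{K_{k+1}}=\Conway_{K_k}\,\widehat R_k$ with $\widehat R_k(w)=\eps_\ell Q_k(w+2,-2)$; everything else follows from this.

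The basic tool is the covering identity stated in the introduction:
\[
\Delta_{L_{k+1}}(t,y^2)\eqdot\Delta_{L_k}(t,y)\,\Delta_{L_k}(t,-y).
\]
It comes from viewing $L_{k+1}$ as the double branched cover of $L_k$ along $C_2$ (Proposition~\ref{prop:tower}). Concretely, the complement of $L_{k+1}$ is the unbranched double cover of the complement of $L_k$ associated to the homomorphism $t\mapsto 1$, $y\mapsto -1$. The standard Fox-calculus computation of Alexander polynomials of abelian covers then gives the product over $y\mapsto\pm y$.

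By Theorem~\ref{thm:A}, both sides can be rewritten in palindromic form. Since $y^2+y^{-2}=(y+y^{-1})^2-2$, we get
\[
Q_{k+1}(u,\,v^2-2)\;=\;\pm\,Q_k(u,v)\,Q_k(u,-v).
\]
The unit ambiguity disappears because both sides are symmetric functions of the correct type, and the remaining sign is fixed at a convenient point.

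Next substitute $v=2$, i.e.\ $y=1$. The left side becomes $Q_{k+1}(u,2)$, which is $\qint{\ell}\Conway_{K_{k+1}}$ by the normalization in Theorem~\ref{thm:A}. The right side is $Q_k(u,2)\,Q_k(u,-2)=\qint{\ell}\Conway_{K_k}\,Q_k(w+2,-2)$.

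The same $\ell$ appears at every level, so we may cancel the factor $\qint{\ell}$ in the domain $\Z[w]$. This leaves
\[
\Conway_{K_{k+1}}=\pm\Conway_{K_k}\,Q_k(w+2,-2).
\]

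To pin down the sign and show $\widehat R_k(0)=1$, evaluate at $w=0$, i.e.\ $t=1$. Both Conway polynomials equal $1$ there, so $Q_k(2,-2)=\pm1$. Our claim is that this value is exactly $\eps_\ell$ under the normalization of Theorem~\ref{thm:A}.

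I would compute it via the Torres condition. It gives $\Delta_{L_k}(1,y)\eqdot\frac{y^{\ell}-1}{y-1}$, so $Q_k(2,v)$ is the palindromic normalization of $\frac{y^\ell-1}{y-1}$ as a polynomial in $v=y+y^{-1}$. This is a Chebyshev-type polynomial with value $\ell$ at $v=2$. At $v=-2$, i.e.\ $y=-1$, the symmetric form $\frac{y^{\ell/2}-y^{-\ell/2}}{y^{1/2}-y^{-1/2}}$ evaluates to $(-1)^{(\ell-1)/2}=\eps_\ell$.

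The sign convention must be consistent with $Q_k(u,2)=\qint{\ell}\Conway$ at $u=2$, which requires $Q_k(2,2)=\ell$; this is the convention just used. With that, $\widehat R_k(0)=1$, and the global sign in the product identity must be $+$ to match the values at $w=0$.

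Telescoping the identity over $k=1,\dots,a-1$ gives
\[
\Conway_K=\Conway_{K_1}\prod_{k=1}^{a-1}\widehat R_k.
\]
Since $K_1$ is strongly positive amphicheiral, Hartley--Kawauchi \cite{HK} gives $\Conway_{K_1}=\varphi^2$, which completes the formula.

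The main obstacle is making the covering identity exact rather than only up to $\eqdot$. This requires tracking the normalization of $Q_k$ under $y\mapsto-y$. One must check that $Q_k(u,-v)$ is the correct normalized form of $\Delta_{L_k}(t,-y)$: a monomial $y^{j}$ picks up the sign $(-1)^j$, and since $\ell$ is odd (Lemma~\ref{lem:elleven}) the half-integer shifts in the symmetrization contribute precisely the factor $\eps_\ell$. I would handle this by working in Conway-symmetrized form with the variables $t^{1/2},y^{1/2}$ throughout. The remaining global unit $\pm1$ is then fixed by the evaluation at $w=0$ done above.
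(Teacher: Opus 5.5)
Your proof is correct and follows the paper's skeleton: specialize the covering identity at $y=1$, use Torres, cancel $\qint{\ell}$, fix the sign at $w=0$, and telescope from the Hartley--Kawauchi base. The difference is in how you determine $R_k(0)=Q_k(2,-2)$.

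The paper never computes $Q_k$ at $t=1$. It first gets $R_k(0)=\pm1$ by evaluating $\Conway_{K_{k+1}}=\pm\Conway_{K_k}R_k$ at $w=0$. It then uses Lemma~\ref{lem:Rcongruence}: odd powers of $v$ contribute multiples of $2^{j+1}$, so $Q_k(u,2)\equiv Q_k(u,-2)\pmod 4$. This gives $R_k(0)\equiv\ell\pmod 4$, which singles out $\eps_\ell$ among $\pm1$.

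You instead use the Torres condition in the axis variable, with $\Delta_{C_2}=1$, to pin down the whole specialization $Q_k(2,v)=c_\ell(v)$, where $c_\ell(y+y^{-1})=\sum_{|j|\le(\ell-1)/2}y^j$ and the sign is fixed by $Q_k(2,2)=\ell$. You then read off $R_k(0)=c_\ell(-2)=\eps_\ell$ directly.

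The paper's argument is purely arithmetic, needs only integrality of $Q_k$, and the same congruence then drives Theorem~\ref{thm:mod4}. Yours uses one more topological input, the unknottedness of the axis. In exchange it explains where $\eps_\ell$ comes from and does not need $R_k(0)=\pm1$ in advance. It is exactly the evaluation the paper itself performs later to fix the sign $\delta$ in the proof of Theorem~\ref{thm:cyclores}.

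Two small corrections.

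First, Theorem~\ref{thm:normalform} is only proved for levels $k\le a-1$; its proof needs $\Delta_{L_k}(t,-1)\neq0$, which comes from the level above. So $Q_{k+1}$ is not available when $k+1=a$. Since all you use is $Q_{k+1}(u,2)=\qint{\ell}\Conway_{K_{k+1}}$, replace it by the Torres specialization $\Delta_{L_{k+1}}(t,1)\eqdot\qint{\ell}\Conway_{K_{k+1}}$, as the paper does.

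Second, your closing ``main obstacle'' is not one. The identity $\Conway_{K_{k+1}}=\pm\Conway_{K_k}R_k$ only needs to hold up to sign, and your $w=0$ evaluation already fixes that sign. Also, $Q_k(u,-v)$ is just the substitution $v\mapsto -v$: there are no half-integer shifts to track, and $\eps_\ell$ enters only through $c_\ell(-2)$.
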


\begin{theorem}[Mod 4; Theorem~\ref{thm:mod4}]\label{thm:C}
$\widehat R_k \equiv \eps_\ell\qint{\ell}\,\Conway_{K_k} \pmod 4$ for each $k$, and consequently
\[
\Conway_K \;\equiv\; \big(\eps_\ell\qint{\ell}\big)^{2^{a-1}-1}\,\varphi^{2^{a}}\pmod 4 .
\] 
\end{theorem}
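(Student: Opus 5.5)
We prove the congruence $\widehat R_k \equiv \eps_\ell\qint{\ell}\,\Conway_{K_k} \pmod 4$ directly from the palindromic normal form (Theorem~\ref{thm:A}), and then feed it into the telescoping identity of Theorem~\ref{thm:B}.

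\textbf{Step 1: the congruence $Q_k(u,-2)\equiv Q_k(u,2) \pmod 4$.} Write $Q_k(u,v)=\sum_j q_j(u)\,v^j$ with $v=y+y^{-1}$. Then
\[
Q_k(u,-2)-Q_k(u,2)=\sum_{j\ \mathrm{odd}} q_j(u)\big((-2)^j-2^j\big)=-2\sum_{j\ \mathrm{odd}} 2^{j}q_j(u),
\]
and every term on the right is divisible by $4$. This is just the observation made in the introduction that $(y^p+y^{-p})$ takes values congruent mod $4$ at $y=\pm1$, expressed through the polynomial $Q_k$ in $v$. Setting $u=w+2$ and using the normalization $Q_k(w+2,2)=\qint{\ell}\Conway_{K_k}$, we get
\[
\widehat R_k=\eps_\ell Q_k(w+2,-2)\equiv \eps_\ell\qint{\ell}\,\Conway_{K_k}\pmod 4 .
\]
The only subtlety is that $Q_k$ must be an honest element of $\Z[u,v]$ with exactly the stated normalization, not merely defined up to sign or units. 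I take this from Theorem~\ref{thm:A}, where the normalization is fixed by the evaluation at $v=2$. The consistency check $\widehat R_k(0)=1$ (Theorem~\ref{thm:B}) agrees with $\eps_\ell\,\ell\equiv 1\pmod 4$ at $w=0$.

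\textbf{Step 2: the recursion.} By Theorem~\ref{thm:B}, $\Conway_{K_{k+1}}=\Conway_{K_k}\widehat R_k$. Combining this with Step 1 gives
\[
\Conway_{K_{k+1}}\equiv \eps_\ell\qint{\ell}\,\Conway_{K_k}^2 \pmod 4 .
\]

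\textbf{Step 3: induction on $k$.} Start from $\Conway_{K_1}=\varphi^2$ and put $\alpha=\eps_\ell\qint{\ell}$. Suppose
\[
\Conway_{K_k}\equiv \alpha^{2^{k-1}-1}\varphi^{2^k} \pmod 4 .
\]
Then
\[
\Conway_{K_{k+1}}\equiv \alpha\cdot\alpha^{2^k-2}\varphi^{2^{k+1}}=\alpha^{2^k-1}\varphi^{2^{k+1}} \pmod 4 .
\]
Reading the tower so that $K=K_a$ sits at the top, taking $k=a$ gives the stated formula $\Conway_K\equiv \alpha^{2^{a-1}-1}\varphi^{2^a}\pmod 4$.

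\textbf{Main obstacle: the index range.} Theorem~\ref{thm:A} is stated for $1\le k\le a-1$, and Theorem~\ref{thm:B} uses $\widehat R_k$ for $k$ up to $a-1$. Those are exactly the indices the recursion needs to reach $K_a$, so no congruence for $\widehat R_a$ is required and the induction closes.
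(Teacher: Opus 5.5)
Your proposal is correct and matches the paper's proof. Step~1 is exactly Lemma~\ref{lem:Rcongruence}: odd powers of $v$ contribute multiples of $2^{j+1}$, so $Q_k(u,2)\equiv Q_k(u,-2)\pmod 4$. Steps~2--3 are the paper's combination of Theorem~\ref{thm:telescope} with induction from $\Conway_{K_1}=\varphi^2$, evaluated at $k=a$, and your handling of the index range $1\le k\le a-1$ is also right.
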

This is yet another way to prove $\Conway_K \equiv f(z)f(-z)\pmod 4$ for knots invariant under the standard rotary reflection of order $2^a$, 
again using that $\varepsilon_\ell[\ell]=f(z)f(-z)\mod 4$. 

Finally, we present a beautiful decomposition of the Conway polynomial in terms of roots of unity and the polynomial $Q_1$ at the base of the tower. Here is the simplest statement, but compare Theorem~\ref{thm:cyclores} for a further elaboration.

\begin{theorem}
Let $K$ be a $2^a$ periodically positive amphicheiral knot in the standard rotary model,
$a\geq2$, with axis linking number $\ell$, and let $Q_1(u,v)$ be the palindromic normal form
\textup{(}Theorem~\textup{\ref{thm:normalform}}\textup{)} of the Alexander polynomial of the first quotient link
$L_1$. Then
\begin{equation}
\eps_\ell\qint\ell\,\Conway_K
\;=\;\prod_{\zeta^{2^{a-1}}=1} Q_1\big(u,\ \zeta+\zeta^{-1}\big)
\end{equation}
where the product runs over all $2^{a-1}$-st roots of unity.
\end{theorem}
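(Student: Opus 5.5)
We plan to iterate the covering relation $\Delta_{L_k}(t,y^2)\eqdot\Delta_{L_{k-1}}(t,y)\,\Delta_{L_{k-1}}(t,-y)$ quoted in the introduction, working throughout with the palindromic normal forms $Q_k$ of Theorem~\ref{thm:A}. The top link $L_a=K\cup C_2$ is not itself covered by Theorem~\ref{thm:A}, since that theorem only treats $k\le a-1$. So we treat the top level only through its specialization at $y=1$, where Torres gives $\Delta_{L_a}(t,1)\eqdot\qint{\ell}\Delta_K(t)$.

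\textbf{Step 1: translate the covering relation into the $v$-variable.} Write $v=y+y^{-1}$. Then
\[
y^2+y^{-2}=v^2-2 .
\]
The two lifts $\pm y$ correspond to $\pm v$. Hence the covering relation becomes
\[
Q_k(u,\,v^2-2)\;\eqdot\;\pm\,Q_{k-1}(u,v)\,Q_{k-1}(u,-v),
\]
valid as long as both $Q_k$ and $Q_{k-1}$ are defined. For the top step, with $k=a$, we only use the consequence at $y=1$. There $v=2$ maps to $v^2-2=2$, and the relation gives
\[
\qint{\ell}\Conway_K \;=\; \pm\, Q_{a-1}(u,2)\,Q_{a-1}(u,-2).
\]

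\textbf{Step 2: unfold the tower.} The key identity is the factorization
\[
\prod_{\zeta^{2^{j}}=1}(v-\zeta-\zeta^{-1})\,\big|_{v=y+y^{-1}} \;=\; y^{-2^{j-1}}\,(y^{2^j}-1)\ \text{(up to units)} .
\]
Equivalently, the map $v\mapsto v^2-2$ sends the set $\{\zeta+\zeta^{-1}:\zeta^{2^{j}}=1\}$ to $\{\eta+\eta^{-1}:\eta^{2^{j-1}}=1\}$, and each fibre is $\pm$ a single value. Inducting downward, we substitute $Q_{k}(u,v^2-2)=\pm Q_{k-1}(u,v)Q_{k-1}(u,-v)$ at $v$ equal to the square roots of the previous level. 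Starting from the evaluation points $v\in\{2,-2\}$ at level $a-1$, this yields
\[
\qint\ell\Conway_K=\pm\prod_{\zeta^{2^{a-1}}=1}Q_1(u,\zeta+\zeta^{-1}) .
\]
Here we use $\zeta\mapsto\zeta^2$, which is $2$-to-$1$ from the $2^{j}$-th roots onto the $2^{j-1}$-th roots, together with $\pm\sqrt{\eta+\eta^{-1}+2}=\pm(\zeta+\zeta^{-1})$ when $\zeta^2=\eta$. Concretely,
\[
(\zeta+\zeta^{-1})^2-2=\zeta^2+\zeta^{-2}.
\]
The multiset of level-$j$ evaluation points is therefore exactly $\{\zeta+\zeta^{-1}\}$ over all $2^{j}$-th roots, counted correctly: the two square roots $\pm(\zeta+\zeta^{-1})$ come from $\zeta$ and $-\zeta$.

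\textbf{Step 3: fix the sign.} The main obstacle is controlling the unit ambiguity $\eqdot$ and the resulting global sign, so that the result comes out exactly $\eps_\ell$. Theorem~\ref{thm:A} pins $Q_k$ only up to sign via $Q_k(w+2,2)=\qint\ell\Conway_{K_k}$, and $y$-palindromicity removes the monomial ambiguity, leaving only $\pm$. We determine this sign by evaluating at $u=2$, i.e. $w=0$, where both sides are integers.

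The left side gives $\eps_\ell\,\ell$. On the right, the factor $\zeta=1$ gives $Q_1(2,2)=\ell$. For the remaining factors, the one-variable Torres/Hosokawa evaluation $\Delta_{L_1}(1,y)$ equals $\qint{\ell}$ in $y$, up to sign and units. This shows
\[
Q_1(2,\zeta+\zeta^{-1})=\pm\,\frac{\zeta^{\ell/2}-\zeta^{-\ell/2}}{\zeta^{1/2}-\zeta^{-1/2}} .
\]
The product of these values over $\zeta\neq1$ is a Gauss-type product whose value is $\prod_{\zeta\ne1}\frac{\sin(\pi\ell j/2^{a-1})}{\sin(\pi j/2^{a-1})}$. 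Since $\ell$ is odd, this product is $\pm1$, and it evaluates to $\eps_\ell$ by the classical Gauss lemma count of sign changes.

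Finally, the $u$-dependence only multiplies by a fixed unit sign. Both sides are polynomials agreeing up to a global $\pm$, so matching at $w=0$ gives exact equality. If the normalization of $Q_1$ at $v=\zeta+\zeta^{-1}$ involves a $\pm$ not determined by $v=2$, we would instead absorb it using Theorem~\ref{thm:B}'s normalization $\widehat R_k(0)=1$, which is consistent with the $\zeta=-1$ factor $\eps_\ell Q_1(u,-2)=\widehat R_1$.
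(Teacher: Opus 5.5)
Your proposal is correct and is essentially the paper's proof. You iterate the covering factorization down the tower, working in the $Q_k$ variables via $v\mapsto v^2-2$ where the paper first proves Lemma~\ref{lem:regrep} for $\Delta$ and then sets $y=1$, and you fix the global sign at $t=1$ by Torres in the axis variable; the sign there is uniform in $\zeta$, being the single sign in $Q_1(2,y+y^{-1})=\pm y^{-(\ell-1)/2}\frac{y^\ell-1}{y-1}$ already forced to be $+$ by $Q_1(2,2)=\ell$, so your closing hedge via Theorem~\ref{thm:B} is unnecessary. Minor slips: your unused ``key identity'' should read $y^{-2^j}(y^{2^j}-1)^2$, and the asserted Gauss-lemma sine count can be replaced by the paper's one-line argument that $\zeta\mapsto\zeta^\ell$ permutes the nontrivial roots and $\prod_{\zeta\neq1}\zeta=-1$, giving $\prod_{\zeta\neq1}\zeta^{-(\ell-1)/2}\frac{\zeta^\ell-1}{\zeta-1}=\eps_\ell$.
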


\subsection*{Acknowledgments} This paper was developed in an extended research dialogue with Claude (Anthropic; model Claude Fable 5, July 2026), which proposed the quotient-tower strategy, the level-wise torsion identities, and drafted an initial version of the manuscript, which has since been heavily revised. All results produced by Claude, with the exception of specific example calculations in Appendix B, were independently verified by the author, who bears sole responsibility for the correctness of all results in this paper. 

\section{Split polynomials modulo 4}\label{sec:S}

Fix $w=z^2$. 

\begin{definition}\label{def:S}
Let $S\subset \Z_4[w]$ be the set of reductions mod $4$ of polynomials $f(z)f(-z)$, $f\in\Z[z]$.
\end{definition}

Conjecture~\ref{conj:main} states that the Conway polynomial of an amphicheiral knot lies in $S$. 

\begin{lemma}\label{lem:normform}
$S=\{\,g(w)^2-w\,h(w)^2 \bmod 4 \;:\; g,h\in\Z[w]\,\}$, and $S$ is closed under multiplication. In particular $S$ contains all squares $g(w)^2$. 
\end{lemma}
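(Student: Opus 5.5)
Split $f\in\Z[z]$ into its even and odd parts, $f(z)=g(z^2)+z\,h(z^2)$ with $g,h\in\Z[w]$. Then $f(-z)=g(w)-z\,h(w)$, so
\[
f(z)f(-z)=g(w)^2-z^2h(w)^2=g(w)^2-w\,h(w)^2
\]
exactly over $\Z$. Conversely, any pair $g,h\in\Z[w]$ gives $f(z)=g(z^2)+zh(z^2)$, and this identity shows $g^2-wh^2=f(z)f(-z)$. Reducing mod $4$ proves the first claim: $S$ is precisely the set of reductions of $g^2-wh^2$. The only care needed is that $S$ is defined as the set of reductions of exact integral products. The identity is already exact, so no lifting issue arises.

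For closure under multiplication I will use the norm form from $\Z[z]$ to $\Z[w]$. Given $f_1(z)f_1(-z)$ and $f_2(z)f_2(-z)$, set $f=f_1f_2\in\Z[z]$. Then
\[
f(z)f(-z)=\big(f_1(z)f_1(-z)\big)\big(f_2(z)f_2(-z)\big).
\]
Hence the set of integral split polynomials is closed under products, and so is its image mod $4$. Equivalently, in $(g,h)$ coordinates this is the Brahmagupta identity
\[
(g_1^2-wh_1^2)(g_2^2-wh_2^2)=(g_1g_2+wh_1h_2)^2-w(g_1h_2+g_2h_1)^2.
\]
There is one subtlety: an element of $S$ is a class mod $4$. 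Its representative may differ from $g^2-wh^2$ by a multiple of $4$, but the product of the chosen exact representatives $g_i^2-wh_i^2$ reduces to the product of the classes. So closure holds in $\Z_4[w]$.

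Finally, squares are obtained by taking $h=0$, i.e.\ $f(z)=g(z^2)$, which gives $f(z)f(-z)=g(w)^2$. I expect no real obstacle. The only point to state carefully is that the correspondence $f\leftrightarrow(g,h)$ is a bijection $\Z[z]\leftrightarrow\Z[w]^2$. This guarantees that every pair $(g,h)$, not only those coming from some $f$, is realized.
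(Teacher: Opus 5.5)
Your proposal is correct and is essentially the paper's proof: the even/odd decomposition $f(z)=g(z^2)+zh(z^2)$ gives $f(z)f(-z)=g(w)^2-wh(w)^2$ in both directions, and closure under multiplication comes from $(f_1f_2)(z)(f_1f_2)(-z)=f_1(z)f_1(-z)\,f_2(z)f_2(-z)$. Your remarks on the Brahmagupta identity and on choosing representatives mod $4$ are harmless additions the paper omits.
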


\begin{proof}
Writing $f(z)=g(z^2)+zh(z^2)$ gives $f(z)f(-z)=g(w)^2-wh(w)^2$, and conversely. Multiplicative closure follows directly from Definition~\ref{def:S}.
\end{proof}

This is a ``norm form'' for members of $S$. Write $\Z[z]\cong \Z[w]\oplus z\Z[w]$, whereupon multiplication by an element of $\Z[z]$ is a $2\times2$ matrix, and the determinants of such matrices modulo 4 form the class $S$. 

It turns out that testing membership in $S$ is quite straightforward: there is a unique mod 2 witness.

\begin{lemma}\label{lem:rigidity}
There is a bijection $\Z_2[z]\to S\subseteq \Z_4[w]$ given by $f\mapsto \tilde f(z)\tilde f(-z)$ for any lift $\tilde f\in \Z_4[z].$
\end{lemma}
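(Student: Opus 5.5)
The proof splits into three parts: check that the map is well defined, then surjective, then injective. Throughout we use the norm form of Lemma~\ref{lem:normform}: for $\tilde f\in\Z_4[z]$ write $\tilde f(z)=g(z^2)+z\,h(z^2)$, so that $\tilde f(z)\tilde f(-z)=g(w)^2-w\,h(w)^2$.

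\emph{Well-definedness.} Any two lifts of $f\in\Z_2[z]$ differ by $2e$ for some $e\in\Z_4[z]$. Expanding,
\[
(\tilde f+2e)(z)\,(\tilde f+2e)(-z)=\tilde f(z)\tilde f(-z)+2\big(X(z)+X(-z)\big)+4e(z)e(-z),\qquad X(z)=e(z)\tilde f(-z).
\]
Now $X(z)+X(-z)$ is twice the even part of $X$, so it is divisible by $2$, and the middle term is therefore $\equiv 0\pmod 4$. The last term also vanishes mod $4$. Hence the image is independent of the lift. It lies in $S$ because we may always choose the lift with integer coefficients.

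\emph{Surjectivity.} This is immediate from Definition~\ref{def:S}. Every element of $S$ is the reduction of some $f(z)f(-z)$ with $f\in\Z[z]$, and this is the image of $f \bmod 2$.

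\emph{Injectivity.} This is the only step with content, and I expect it to already work modulo $2$. Reduce the norm form $g(w)^2-w\,h(w)^2$ mod $2$. Over $\Z_2$ the Frobenius gives $g(w)^2=g(w^2)$ and $h(w)^2=h(w^2)$, so the image reduces mod $2$ to
\[
g(w^2)+w\,h(w^2)\in\Z_2[w].
\]
The even-degree part of this polynomial determines $g \bmod 2$ and the odd-degree part determines $h \bmod 2$. Therefore $f\equiv g(z^2)+z\,h(z^2)\pmod 2$ is recovered from its image, and even from that image's reduction mod $2$. This gives injectivity and completes the bijection. As a by-product, the unique mod-$2$ witness of an element $P\in S$ can be read off from $P \bmod 2$ by splitting it into even and odd powers of $w$.
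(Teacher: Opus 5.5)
Your proof is correct and follows essentially the paper's argument. The well-definedness computation is the same. Your injectivity step is the paper's observation that $\tilde f(z)\tilde f(-z)\equiv f(z)^2 \pmod 2$, combined with injectivity of squaring on $\Z_2[z]$; your even/odd splitting in $w$ just writes the inverse of Frobenius explicitly. You also spell out surjectivity, which the paper leaves implicit in Definition~\ref{def:S}.
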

\begin{proof}
This map is well defined: $f+2g\mapsto f(z)f(-z)+2\big(f(z)g(-z)+f(-z)g(z)\big)$, and $f(z)g(-z)+f(-z)g(z)=h(z)+h(-z)$ is clearly divisible by $2$. Moreover, given $s=f(z)f(-z)\in S$, modulo 2 $f$ is a square root of $s$. Squaring is injective (and linear) in the ring $\Z_2[z]$, so $f \pmod 2$ is uniquely determined.
\end{proof}

For $p\in\mathbb{Z}_4[z]$ let
$\bar p\in\mathbb{F}_2[z]$ denote its reduction modulo $2$.
We call $\bar f\in \Z_2[z]$ the \emph{witness} to the element $f(z)f(-z)\in S$.

\begin{lemma}\label{lem:reduce}
Let $s\in S$ with at least one odd coefficient. Suppose $sf(z)\in S$. Then $f(z)\in S$.
\end{lemma}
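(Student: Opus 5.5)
The plan is to use the witness bijection of Lemma~\ref{lem:rigidity} twice: first to build a candidate witness for $f$ by division in $\Z_2[z]$, and then to cancel $s$ in $\Z_4[w]$. Here $f$ is an element of $\Z_4[w]$ (a polynomial in $w=z^2$).

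\emph{Step 1: a witness for $f$ mod $2$.} Write $s\equiv a(z)a(-z)$ and $sf\equiv b(z)b(-z) \pmod 4$ with $a,b\in\Z[z]$. Reducing mod $2$, where $a(-z)\equiv a(z)$, gives $\bar s=\bar a^2$ and $\bar s\,\bar f=\bar b^2$ in $\Z_2[z]$. Since $s$ has an odd coefficient, $\bar s\neq 0$, so $\bar a\neq 0$. From $\bar a^2\mid \bar b^2$ in the UFD $\Z_2[z]$ we get $\bar a\mid\bar b$. So write $\bar b=\bar a\,\bar c$ for some $\bar c\in\Z_2[z]$, and choose any lift $c\in\Z[z]$.

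\emph{Step 2: compare with the witness of $sf$.} Consider the product $(ac)(z)(ac)(-z)=s\cdot c(z)c(-z) \pmod 4$. Its witness is $\overline{ac}=\bar a\bar c=\bar b$, the same witness as $sf$. By the injectivity in Lemma~\ref{lem:rigidity}, the element of $S$ is determined by its mod-$2$ witness, so
\[
s\,f\equiv s\,c(z)c(-z)\pmod 4,\qquad\text{i.e.}\qquad s\,\big(f-c(z)c(-z)\big)\equiv 0 \pmod 4 .
\]

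\emph{Step 3: cancel $s$.} It remains to show that $s$ is not a zero divisor in $\Z_4[w]$; this is the only point needing care. Suppose $sg\equiv 0\pmod 4$ with $g\in\Z_4[w]$.
\begin{itemize}
\item Reducing mod $2$ gives $\bar s\bar g=0$ in the domain $\Z_2[w]$. Since $\bar s\ne0$, we get $\bar g=0$, so $g=2g'$.
\item Then $2sg'\equiv 0\pmod 4$ means $\bar s\,\bar g'=0$, so again $\bar g'=0$ and hence $g\equiv0\pmod 4$.
\end{itemize}
Applying this with $g=f-c(z)c(-z)$ yields $f\equiv c(z)c(-z)\pmod 4$, so $f\in S$.

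I expect no serious obstacle. The key inputs are the uniqueness of the witness (Lemma~\ref{lem:rigidity}) and the hypothesis that $s$ has an odd coefficient, which is used twice: to divide $\bar b$ by $\bar a$, and to cancel $s$ mod $4$.
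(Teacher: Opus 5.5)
Your proof is correct and follows the paper's argument essentially step for step: divide witnesses in the UFD $\Z_2[z]$ to get $\bar c=\bar b/\bar a$, lift it, apply Lemma~\ref{lem:rigidity} to obtain $s\,\big(f-c(z)c(-z)\big)=0$ in $\Z_4[w]$, and cancel $s$ using the same two-stage mod-$2$ argument that $s$ is not a zero divisor. (The property of Lemma~\ref{lem:rigidity} you need in Step 2 is that the map is well defined, i.e.\ independent of the choice of lift, rather than its injectivity; the claim as you state it is nonetheless correct.)
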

\begin{proof}
Note $\bar s\neq 0$ since $s$ has an odd coefficient.

First we show $s$ is not a zero divisor.
Suppose $sX=0$. If $\bar X\neq 0$, then $\bar s\,\bar X\neq 0$ because
$\mathbb{F}_2[z]$ is an integral domain, contradicting $sX=0$. So $\bar X=0$;
write $X=2Y$. Then $0=sX=2sY$ forces $sY\equiv 0\pmod 2$, i.e.\
$\bar s\,\bar Y=0$, whence $\bar Y=0$ and $X=2Y=0$ in $\mathbb{Z}_4[z]$.

Let $\bar g$ be the witness to $s$ and $\bar h$ the witness to $sf$, with $g$ and $h$ mod $4$ lifts. Then since these are mod 2 square roots, we have $\bar g^2\bar f=\bar h^2$. $\Z_2[z]$ is a UFD, so $\bar g\,|\,\bar h$. Set $\bar c=\bar h/\bar g$ so that $\bar f=\bar c^2$.

Now choose any lift $c\in\mathbb{Z}_4[z]$ of $\bar c$. Then $gc\equiv h\pmod 2$, so
Lemma~\ref{lem:rigidity} gives
\[
sf \;=\; h(z)h(-z) \;=\; (gc)(z)\,(gc)(-z)
   \;=\; g(z)g(-z)\cdot c(z)c(-z) \;=\; s\cdot c(z)c(-z).
\]
Therefore $s\bigl(f-c(z)c(-z)\bigr)=0$, and since $s$ is not a zero divisor, we conclude
$f=c(z)c(-z)\in S$.
\end{proof}

The following identity is the arithmetic heart of the mod-$4$ phenomenon. Let $s=t^{1/2}$, and inside $\Z[s^{\pm1}]$ consider
\[
z=s-s^{-1},\qquad \xi=s+s^{-1},\qquad \xi^2=z^2+4=w+4 .
\]
The involution $\sigma\colon s\mapsto -s^{-1}$ fixes $z$ and negates $\xi$; the involution $s\mapsto s^{-1}$ fixes $\xi$ and negates $z$. The fixed ring of $s\mapsto s^{-1}$ is $\Z[\xi]$ and the fixed ring of $\sigma$ is $\Z[z]$ (both standard: $\Z[s^{\pm1}]$ is free of rank 2 over either subring, with $s^2=\xi s-1=zs+1$).

\begin{lemma}[Signed quantum integers split]\label{lem:qint}
Let $\ell\ge1$ be odd and $\qint{\ell}=(s^\ell-s^{-\ell})/(s-s^{-1})\in\Z[w]$. Define
\[
B \;:=\; s^{-(\ell-1)/2}\,\frac{s^{\ell}-1}{s-1}\;=\;s^{-(\ell-1)/2}\big(1+s+\cdots+s^{\ell-1}\big).
\]
Then $B\in\Z[\xi]$, of degree $(\ell-1)/2$ in $\xi$, and
\[
B(\xi)\,B(-\xi)\;=\;\eps_\ell\,\qint{\ell},\qquad \eps_\ell=(-1)^{(\ell-1)/2},
\]
exactly in $\Z[w]$. Consequently $\eps_\ell\qint{\ell}\bmod 4\in S$.
\end{lemma}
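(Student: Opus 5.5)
The approach is to exhibit $B$ as a palindromic Laurent polynomial in $s$, apply $\sigma$ to get $B(-\xi)$, and multiply. Write $m=(\ell-1)/2$, so that
\[
B=s^{-m}+s^{-m+1}+\cdots+s^{m}
\]
is a symmetric Laurent polynomial in $s$. It is therefore fixed by $s\mapsto s^{-1}$, and so lies in $\Z[\xi]$. Concretely, I would group terms as $1+\sum_{j=1}^{m}(s^j+s^{-j})$. Each $s^j+s^{-j}$ is a monic integer polynomial in $\xi$ of degree $j$ (a Chebyshev-type recursion, $s^{j+1}+s^{-j-1}=\xi(s^j+s^{-j})-(s^{j-1}+s^{1-j})$), so $B$ has degree exactly $m$ in $\xi$ with leading coefficient $1$.

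Next I would identify $B(-\xi)$ with $\sigma(B)$. Since $\sigma$ is a ring automorphism of $\Z[s^{\pm1}]$ sending $\xi\mapsto-\xi$, applying it to $B=P(\xi)$ gives $P(-\xi)$. Now compute directly, using $(-1)^{-j}=(-1)^j$:
\[
\sigma(B)=\sum_{j=-m}^{m}(-s^{-1})^{j}=\sum_{j=-m}^{m}(-1)^j s^{j}.
\]
The product is then a routine telescoping identity:
\[
B\,\sigma(B)=s^{-2m}\,\frac{s^{\ell}-1}{s-1}\cdot\frac{(-s)^{\ell}-1}{(-s)-1}
=s^{-2m}\,\frac{s^{\ell}-1}{s-1}\cdot\frac{s^{\ell}+1}{s+1}
=s^{-2m}\,\frac{s^{2\ell}-1}{s^{2}-1}.
\]
Here I use that $\ell$ is odd, so $(-s)^\ell=-s^\ell$ and signs cancel in numerator and denominator. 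Since $2m=\ell-1$, we get
\[
\frac{s^{2\ell}-1}{s^{\ell-1}(s^2-1)}=\frac{s^\ell-s^{-\ell}}{s-s^{-1}}=\qint{\ell}.
\]

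This computation gives $B\sigma(B)=\qint{\ell}$ with no sign, so I need to recheck where $\eps_\ell$ enters, and this is the step I expect to be the only real obstacle. The culprit is the identification $\sigma(s^{-m})=(-1)^m s^{m}$: I should instead write
\[
\sigma(B)=\sigma(s)^{-m}\,\frac{\sigma(s)^\ell-1}{\sigma(s)-1}
\]
literally with $\sigma(s)=-s^{-1}$. That gives $(-1)^m s^{m}\cdot\frac{-s^{-\ell}-1}{-s^{-1}-1}$, and multiplying out yields $(-1)^m\qint{\ell}$. My naive termwise calculation must be reconciled with this. I would check $\ell=3$ explicitly: $B=\xi+1$, $B(-\xi)=1-\xi$, and the product is $1-\xi^2=-(w+3)=-\qint3$. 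This confirms $\eps_\ell$ and shows the error was in treating $B(-\xi)$ as $\sigma(B)$ termwise without care. In the write-up I would therefore do the cleanest safe version: verify $B(-\xi)=-s^{-m}\ldots$ via the substitution $s\mapsto -s$, which also sends $\xi\mapsto-\xi$ on the ring $\Z[s^{\pm1}]$ and is simpler. Under $s\mapsto -s$ we get $B(-\xi)=(-1)^m\sum_j(-1)^j s^j$, and the telescoping above then gives exactly $\eps_\ell\qint{\ell}$.

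Finally, the consequence mod $4$ follows by reduction. Write $B=g(\xi^2)+\xi h(\xi^2)$, so that $B(\xi)B(-\xi)=g(w+4)^2-(w+4)h(w+4)^2$. Reducing mod $4$ gives $g(w)^2-wh(w)^2$, which lies in $S$ by Lemma~\ref{lem:normform}.
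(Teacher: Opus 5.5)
Your overall route is the paper's. $B$ is a symmetric Laurent polynomial in $s$, hence lies in $\Z[\xi]$. $B(-\xi)$ is obtained by applying an automorphism of $\Z[s^{\pm1}]$ that negates $\xi$, and the product is then computed in closed form. The mod $4$ statement follows from writing $B=g(\xi^2)+\xi h(\xi^2)$. Your literal computation $\sigma(B)=\sigma(s)^{-m}\frac{\sigma(s)^\ell-1}{\sigma(s)-1}=(-1)^m s^{m}\frac{-s^{-\ell}-1}{-s^{-1}-1}$ is exactly the paper's proof and is correct. Your degree argument, that each $s^j+s^{-j}$ is monic of degree $j$ in $\xi$, supplies a claim the paper only asserts. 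However, your diagnosis of the sign discrepancy is wrong, and the ``safe version'' you say you would write up rests on a false identity.

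The termwise formula $\sigma(B)=\sum_{j=-m}^{m}(-1)^j s^j$ is correct. The sign was lost when you summed it: $\sum_{j=-m}^{m}(-s)^j=(-s)^{-m}\frac{(-s)^\ell-1}{(-s)-1}=(-1)^m s^{-m}\frac{s^\ell+1}{s+1}$, and you wrote $s^{-m}$ where $(-s)^{-m}$ belongs. With that factor restored, your telescoping gives $B\,\sigma(B)=(-1)^m s^{-2m}\frac{s^{2\ell}-1}{s^2-1}=\eps_\ell\qint{\ell}$ directly, with nothing left to reconcile.

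Your proposed fix then introduces a compensating error. The substitution $s\mapsto -s$ and $\sigma$ differ by $s\mapsto s^{-1}$, which fixes $\Z[\xi]$, so they agree on $B$. Hence $s\mapsto -s$ gives $B(-\xi)=\sum_j(-1)^j s^j$, not $(-1)^m\sum_j(-1)^j s^j$. Already for $\ell=3$ one has $B(-\xi)=1-\xi=-s^{-1}+1-s$, whereas your formula gives $\xi-1$. The spurious $(-1)^m$ in this step cancels the $(-1)^m$ dropped in the telescoping, so the final write-up would reach $\eps_\ell\qint{\ell}$ only through two cancelling errors. Discard that version and write up the literal-substitution computation, or the termwise formula with the geometric sum evaluated correctly.
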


\begin{proof}
This was proven in \cite{CM}, Lemma 3.8, by referring to properties of Lucas polynomials. We reprove it from scratch here.

$B$ is a Laurent polynomial in $s$ invariant under $s\mapsto s^{-1}$, hence lies in $\Z[\xi]$. Since $\sigma(\xi)=-\xi$ and $\sigma$ fixes $\Z[w]$, we compute $B(-\xi)=\sigma(B)$. With $\ell$ odd,
\[
\sigma(B)=(-s^{-1})^{-(\ell-1)/2}\,\frac{(-s^{-1})^{\ell}-1}{-s^{-1}-1}
=\eps_\ell\, s^{(\ell-1)/2}\cdot\frac{-s^{-\ell}-1}{-s^{-1}-1}
=\eps_\ell\, s^{-(\ell-1)/2}\,\frac{s^{\ell}+1}{s+1}.
\]
Hence
\[
B\,\sigma(B)=\eps_\ell\, s^{-(\ell-1)}\,\frac{(s^{\ell}-1)(s^{\ell}+1)}{(s-1)(s+1)}
=\eps_\ell\,\frac{s^{\ell}-s^{-\ell}}{s-s^{-1}}=\eps_\ell\qint{\ell}.
\]
For the last claim write $B=g(\xi^2)+\xi h(\xi^2)$; then
\[
\eps_\ell\qint{\ell}=B(\xi)B(-\xi)=g(w+4)^2-(w+4)\,h(w+4)^2\equiv g(w)^2-w\,h(w)^2\pmod 4,
\]
using $(w+4)^k\equiv w^k\pmod4$. By Lemma~\ref{lem:normform}, $\eps_\ell\qint{\ell}\bmod4\in S$.
\end{proof}

\begin{remark}
The identity of Lemma~\ref{lem:qint} says that the signed quantum integer $\eps_\ell[\ell]$ is a norm from the ``wrong'' quadratic twist $\Z[\xi]$; the two twists $\Z[z]$ and $\Z[\xi]$ are the same modulo~$4$ because $\xi^2-z^2=4$. 
\end{remark}

\begin{example}\
Tracing through the definitions, $\qint{\ell}=s^{\ell-1}+s^{\ell-3}+\cdots+s^{-(\ell-1)}$.
As observed in \cite{Agle}, the symmetric powers can be expressed
$s^{2k}+s^{-2k}=L_{2k}(z)$ where $L_{2k}(z)$ is a Lucas polynomial of order $k$.
Hence $\qint{\ell}=L_{\ell-1}+L_{\ell-3}+\cdots+L_2+L_0$.
We have
\begin{align*}
L_0(z)&=1, & L_2(z)&=z^2+2, & L_4(z)&=z^4+4z^2+2,\\
L_6(z)&=z^6+6z^4+9z^2+2, & L_8(z)&=z^8+8z^6+20z^4+16z^2+2.
\end{align*}
This allows us to compute
\begin{itemize}
\item $-\qint3=-w-3=-(\xi^2-4)-3=(\xi+1)(-\xi+1)$
\item $\qint5=w^2+5w+5=\xi^4-3\xi^2+1=(\xi^2+\xi-1)(\xi^2-\xi-1)$
\item $-\qint7=-w^3-7w^2-14w-7=-\xi^6+5\xi^4-6\xi^2+1
       =(\xi^3+\xi^2-2\xi-1)(-\xi^3+\xi^2+2\xi-1)$
\item $\qint9=w^4+9w^3+27w^2+30w+9
       =(\xi^4+\xi^3-3\xi^2-2\xi+1)(\xi^4-\xi^3-3\xi^2+2\xi+1)$
\end{itemize}
\end{example}

\section{Conventions on link polynomials}\label{sec:conventions}

For an ordered oriented $2$-component link $L=K\cup A\subset\Sph$ with exterior $V$, we write $\Delta_L(t,y)\in\Z[t^{\pm1},y^{\pm1}]$ for the two-variable Alexander polynomial, where $t$ and $y$ are the classes of the oriented meridians of $K$ and $A$ in $H_1(V)\cong\Z^2$. It is well defined up to units $\pm t^\alpha y^\beta$; we write $\eqdot$ for equality up to such units. We use two classical facts \cite{Tor}:
\begin{align}
&\text{(Torres specialization)}\qquad \Delta_L(t,1)\;\eqdot\;\qint{\ell}\;\Conway_K(z), \qquad \ell=\lk(K,A),\label{eq:torres}\\
&\text{(Torres duality)}\qquad\quad\ \ \Delta_L(t^{-1},y^{-1})\;\eqdot\;\Delta_L(t,y).\label{eq:duality}
\end{align}
(Here \eqref{eq:torres} is the usual $\Delta_L(t,1)\eqdot\frac{t^\ell-1}{t-1}\Delta_K(t)$ rewritten via $\frac{t^\ell-1}{t-1}=t^{(\ell-1)/2}\qint{\ell}$.) 

We note that the $1$-variable Alexander polynomial has an elementary
interpretation as the order of the \emph{Alexander module}, the first homology
of the infinite cyclic cover of the knot complement viewed as a
$\Q[t^{\pm1}]$-module. This is a finitely generated torsion module over a PID,
so it decomposes as a direct sum of cyclic modules
$\bigoplus_i \Q[t^{\pm1}]/(a_i(t))$, and $\Delta_K(t)\eqdot\prod_i a_i(t)$
(over $\Q$ the unit ambiguity also admits nonzero rational multiples; the
integral normalization comes from a presentation matrix over $\Z[t^{\pm1}]$).
For links the corresponding module is the first homology of the universal
abelian cover $\tilde V$, but the coefficient ring is no longer a PID and the
module need not decompose into cyclics; classically one extracts $\Delta_L$
instead as the greatest common divisor of the maximal minors of a presentation
matrix. We prefer the torsion point of view, which packages the same
information more functorially: the cellular chain complex $C_*(\tilde V)$ is a
finite based complex of free $\Z[t^{\pm},y^{\pm}]$-modules, the \emph{Alexander
complex} of $V$, and for $2$-component links (unlike knots, where a factor
$t-1$ intervenes) its Reidemeister torsion satisfies
$\tau(V)\eqdot\Delta_L(t,y)$ over the field of fractions of
$\Z[t^{\pm},y^{\pm}]$, whenever $\Delta_L\neq0$ \cite{Mil,Tur}. In particular
a homeomorphism of exteriors carries one Alexander complex to the other, \emph{semilinearly} over the induced
map $\theta$ on $H_1$ (i.e. additively, with scalars twisted through $\theta$:
$\tilde F_*(\lambda\, c)=\theta(\lambda)\,\tilde F_*(c)$), which is the source
of the symmetry \eqref{eq:functoriality} below. In fact, Milnor reproves the Torres
duality result above as a special case of the duality theorem for Reidemeister
torsion applied to the Alexander chain complex, which is the main point of his
paper.

If $F\colon V\to V$ is a homeomorphism inducing $\theta$ on $H_1(V)$, then functoriality of the Alexander module $H_1(\tilde V)$ gives the semilinear symmetry
\begin{equation}\label{eq:functoriality}
\Delta_L\big(\theta(t),\theta(y)\big)\;\eqdot\;\Delta_L(t,y).
\end{equation}

Note the identity of subrings of $\Z[t^{\pm1}]$:
\begin{equation}\label{eq:uw}
u:=t+t^{-1}=w+2,\qquad\text{so } \Z[u]=\Z[w].
\end{equation}

\section{Rotary reflections}\label{sec:rotary}
We begin this section by proving that a positive amphicheiral knot realized by a finite order amphicheiral symmetry can be put in standard rotary reflection form in such a way that the knot avoids the rotation axis. This allows us to do computations with the $2$ component link $K\cup C_2$.

In fact, we don't quite prove this, leaving open the question of whether a strongly positive amphicheiral knot can intersect the rotation axis. Luckily we won't need this fact anyway.
\begin{lemma}\label{lem:taxonomy}
Let $K\subset\Sph$ be a nontrivial knot invariant under an orientation-reversing diffeomorphism $h$ of finite order $n$ with $h|_K$ orientation-preserving. Write $n=2^aq$, $q$ odd, $a\ge1$, and set $g:=h^q$, an orientation-reversing diffeomorphism of order $2^a$ with $g|_K$ orientation-preserving. Then:
\begin{enumerate}
\item $g|_K$ is free \textup{(}acts freely on $K$\textup{)};
\item if $a=1$, then $\Fix(g)=S^0$ and $K$ is strongly positive amphicheiral;
\item If $a\geq2$, then $g$ is conjugate in $\mathrm{Diff}(\Sph)$ to the standard rotary reflection $\rho(z_1,z_2)=(e^{2\pi ic/2^a}z_1,\bar z_2)$ for some odd $c$, and $K\cap C_2=\emptyset$ where $C_2=\Fix(g^2)$.
\end{enumerate}
\end{lemma}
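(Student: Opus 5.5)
The plan is to linearize $g$ using the geometrization of finite group actions, classify orientation-reversing orthogonal maps of $\Sph$ of $2$-power order, and then use one elementary fact about circle maps to locate $K$ relative to the fixed sets.

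\emph{Step 1: linearize and classify.} By \cite{DL}, the smooth action of the cyclic group $\langle g\rangle$ on $\Sph$ is smoothly conjugate to an orthogonal action. So we may assume $g\in O(4)$ with $\det g=-1$ and $g^{2^a}=1$.

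An element of $O(4)$ with determinant $-1$ has real eigenvalues $+1$ and $-1$ (one each in its real part), together with a rotation block $e^{i\theta}$ on a complementary plane. Choosing coordinates adapted to this decomposition gives
\[
g(z_1,z_2)=(e^{i\theta}z_1,\bar z_2).
\]
The order of $g$ is the least common multiple of $2$ and the order of $e^{i\theta}$. Hence:
\begin{itemize}
\item if $a\ge2$, then $e^{i\theta}$ is a primitive $2^a$-th root of unity, that is, $\theta=2\pi c/2^a$ with $c$ odd;
\item if $a=1$, then $\theta\in\{0,\pi\}$.
\end{itemize}

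The fixed sets are as follows. For $\theta=0$, $\Fix(g)=\{z_2\in\R\}$ is a $2$-sphere. Otherwise $\Fix(g)=\{z_1=0,\ z_2\in\R\}\cong S^0$. Moreover $\Fix(g^2)=C_2=\{z_1=0\}$ whenever $g^2\neq1$, which holds when $a\ge2$.

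\emph{Step 2: the circle lemma.} Every power $g^j$ preserves $K$ and is orientation-preserving on $K$. A finite-order orientation-preserving homeomorphism of $S^1$ that has a fixed point is the identity, since it is conjugate to a rotation. I will use this repeatedly: if $g^j$ fixes a point of $K$, then $K\subset\Fix(g^j)$.

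\emph{Step 3: deduce the three parts.}

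For (1): if $g$ fixes a point of $K$, then $K\subset\Fix(g)$. Now $\Fix(g)$ is either $S^0$, which cannot contain a circle, or an $S^2$. A knot lying in a $2$-sphere is unknotted, contradicting nontriviality. So $g|_K$ is free.

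For (2): when $a=1$ we must still exclude the case $\theta=0$. There $\Fix(g)=S^2$ and, by (1), $K$ misses it. So the connected set $K$ lies in one complementary ball. But $g$ interchanges the two balls, so $g(K)\neq K$, a contradiction. Hence $\theta=\pi$ and $\Fix(g)=S^0$. Then $g$ is an orientation-reversing involution of $(\Sph,K)$ preserving the orientation of $K$, which is exactly strong positive amphicheirality.

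For (3): when $a\ge2$, Step 1 already gives conjugacy to $\rho$ with $c$ odd. Suppose $x\in K\cap C_2$. Then $g^2$ fixes $x$, so by Step 2 $K\subset\Fix(g^2)=C_2$. Thus $K=C_2$ is unknotted, again a contradiction.

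\emph{Main obstacle.} The only real difficulty is Step 1: making sure \cite{DL} applies as stated to smooth cyclic actions including orientation-reversing ones, and correctly sorting $O(4)$ elements of determinant $-1$ into normal form. The remaining steps are formal consequences of the circle lemma.
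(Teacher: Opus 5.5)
Your proof is correct and follows the same route as the paper: linearize via \cite{DL}, write $g=R(\theta)\oplus\mathrm{diag}(1,-1)$, and use that a finite-order orientation-preserving circle homeomorphism with a fixed point is the identity. Your ball-swapping argument for the pure reflection $\theta=0$ is in fact a cleaner justification of the paper's brief claim that a reflection ``would force $K$ to be planar.''

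The one thing to add is that (1) is meant, and later used (e.g.\ for $r=g^{2^{a-1}}$ in Proposition~\ref{prop:tower}), as freeness of every nontrivial power $g^j$, not just of $g$. Your argument gives this once you note that for $a\ge2$ and $0<j<2^a$, $\Fix(g^j)$ is $S^0$ for odd $j$ and $C_2$ for even $j$. Both lie in $C_2$, which is disjoint from $K$ by (3).
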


\begin{proof}
By \cite{DL}, the finite cyclic group $\langle g\rangle$ is conjugate to a subgroup of $O(4)$; assume $g\in O(4)$. An orientation-reversing orthogonal transformation of $\R^4$ of order $2^a$ has, in a suitable orthonormal basis, the form $R(\theta)\oplus\mathrm{diag}(1,-1)$ with $\theta=2\pi c/2^a$. If $a=1$, we could have $c=0$, which would be a reflection through a plane, but that would force $K$ to be planar, hence trivial. So $c$ must be nonzero, and 
the order condition forces $c$ odd. This is a standard rotary reflection, with $\Fix(g)=\{\pm e_3\}\cong S^0\subset C_2$.

(1) Suppose $g^j(x)=x$ for some $x\in K$, $0<j<2^a$. Then $g^j|_K$ is a finite-order orientation-preserving homeomorphism of a circle with a fixed point, hence $g^j|_K=\mathrm{id}_K$ and $K\subset\Fix(g^j)$. Given that $g$ is (conjugate to) a standard rotary reflection, the possible fixed point sets of its iterates are $S^0$ for odd powers and $C_2$ for even powers. This would mean $K$ is a subset of one of those two sets, which is impossible for $S^0$ and would imply $K$ is unknotted for $C_2$, also impossible. Thus $g|_K$ is free.

(2) We already ruled out a pure reflection for $g$, hence $g$ is a standard rotary reflection of order $2$. A strongly positive amphicheiral knot is preserved by an orientation reversing involution that preserves strand direction, which describes $g$.

(3) We've already established that $g$ can be taken to be a standard rotary reflection. The fact that $K\cap C_2=\emptyset$ follows from (1), since a point in the intersection would be a fixed point of $g^2\neq\mathrm{id}$. 
\end{proof}

We henceforth fix the standard model with its two invariant circles: the \emph{rotated core} $C_1=\{z_2=0\}$ and the \emph{reflected axis} $C_2=\{z_1=0\}$. Note $g|_{C_2}$ is a reflection of the circle (two fixed points), so $g$ reverses any orientation of $C_2$; consequently $\lk(K,C_2)$ is well defined up to sign, and we orient $C_2$ so that
\[
\ell:=\lk(K,C_2)>0 .
\]

\begin{lemma}[Linking number]\label{lem:elleven}
In the standard model of order $2^a$,
 $\ell$ is odd.
\end{lemma}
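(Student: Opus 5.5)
The plan is to use the half-turn about the axis together with the freeness statement in Lemma~\ref{lem:taxonomy}(1), and to compute $\ell$ as a winding number.

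\textbf{Linking number as a winding number.} Since $K\cap C_2=\emptyset$ by Lemma~\ref{lem:taxonomy}(3), the coordinate $z_1$ does not vanish along $K$. The complement $\Sph\setminus C_2=\{z_1\neq 0\}$ deformation retracts onto the rotated core $C_1$. Under this retraction, the class of a loop in $H_1(\Sph\setminus C_2)\cong\Z$ is the degree of $\arg z_1$ restricted to the loop, and this degree equals the linking number with $C_2$ up to the global sign fixed by orienting $C_2$. So it suffices to show that, for a parametrization $\gamma\colon[0,1]\to K$ of the oriented knot, the total change of $\arg z_1(\gamma(\tau))$ is $2\pi$ times an odd integer.

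\textbf{The half-turn.} Set $r=\rho^{2^{a-1}}$. Because $c$ is odd, $e^{2\pi i c\,2^{a-1}/2^a}=e^{\pi i c}=-1$. Because $2^{a-1}$ is even (this is where $a\ge2$ is used), the conjugation on $z_2$ is applied an even number of times and so cancels. Hence $r(z_1,z_2)=(-z_1,z_2)$. This is the $\pi$-rotation about $C_2$; it preserves $K$ and preserves its orientation, since $\rho|_K$ does. By Lemma~\ref{lem:taxonomy}(1), $\rho$ acts freely on $K$, so $r$ restricts to a fixed-point-free, orientation-preserving involution of the circle $K$. Such an involution is conjugate to the half-rotation of the circle.

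\textbf{The parity count.} Pick $x\in K$ and let $\alpha$ be the oriented arc of $K$ from $x$ to $r(x)$. Then $r(\alpha)$ is the oriented arc from $r(x)$ back to $r^2(x)=x$, and $K=\alpha\cup r(\alpha)$. Since $z_1(r(x))=-z_1(x)$, the change of $\arg z_1$ along $\alpha$ is $\pi+2\pi m$ for some integer $m$. The map $r$ multiplies $z_1$ by the constant $-1$, so $\arg z_1$ changes by exactly the same amount along $r(\alpha)$. The total change around $K$ is therefore $2\pi(2m+1)$, so $\ell=|2m+1|$ is odd.

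The only point needing care is the identification of $\lk(K,C_2)$ with the degree of $\arg z_1$ on $K$. I would justify it by noting that a meridian of $C_2$ is a small circle in the $z_1$-coordinate, so it has $\arg z_1$-degree $\pm1$, and $H_1(\Sph\setminus C_2)$ is generated by that meridian. Any sign ambiguity is harmless, since only the parity of $\ell$ is being claimed.
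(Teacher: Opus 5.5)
Your proof is correct and follows essentially the same route as the paper. Both pass to the half-turn $r=\rho^{2^{a-1}}$, use freeness to write $K=\alpha\cup r(\alpha)$, and observe that $\arg z_1$ increases by the same amount $\pi+2\pi m$ along each arc, so the winding number $\ell$ is odd. You also check explicitly that $r(z_1,z_2)=(-z_1,z_2)$, which uses $a\ge 2$ and $c$ odd, and you justify identifying $\lk(K,C_2)$ with the degree of $\arg z_1$; the paper leaves both of these implicit.
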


\begin{proof}
Replace $g$ by a suitable power so that it acts as rotation by $\pi$ around the axis $C_2$.
The map $\arg z_1$ fibers $\Sph\smallsetminus C_2$ over $S^1$ by disk pages, and $\ell$ equals the winding number of $\arg z_1$ along $K$. Since $g|_K$ is free of order $2$ (Lemma~\ref{lem:taxonomy}(1)) and orientation-preserving, it is conjugate to the rotation of the parameterizing circle by $\pi$. Let $A$ be the arc from a chosen base point $p$ to $g(p)$ in the positive direction of $K$; the translates $A, gA$, cover $K$. The function $\arg z_1$ increases by exactly $\pi$ under $g$, so the increase along each translate $A, gA$ equals a fixed real number $\theta$ with $\theta\equiv \pi \pmod{2\pi}$. (The path along the knot will in general wind around the knot some number of times and may backtrack. The reader is invited to trace out an arc from some $p$ to its $\pi$ rotation in Figure~\ref{fig:ehw_knot}.) Summing,
\[
2\pi\ell \;=(\text{increase along } A)+(\text{increase along } gA)\;=\;2\theta\;=\;2(\pi  + 2\pi m)
\]
for some $m\in\Z$, whence $\ell\equiv 1\pmod{2}$.
\end{proof}

\begin{remark}\label{rem:permutation}
An alternative, purely combinatorial proof that $\ell$ is odd: the number of components of a closed tangle about $C_2$ and the linking number with $C_2$ are invariant under crossing changes of $K$ with itself, so one may equivariantly change crossings until $K$ is braided about $C_2$, transverse to the pages. The ($1/2^{a-1}$)-turn symmetry exhibits the total monodromy permutation of the $\ell$ strand points as a product of $2^a$ words of the same number of letters, so it is even; connectivity of $K$ makes it an $\ell$-cycle, of sign $(-1)^{\ell-1}$. Hence $\ell$ is odd. 
\end{remark}

\section{The quotient tower}\label{sec:tower}

\begin{proposition}[One step of the tower]\label{prop:tower}
Let $a\ge2$, let $(\Sph,K,g)$ be the standard model of order $2^a$ with $K\cap C_2=\emptyset$, $g|_K$ free and orientation-preserving, and let $r:=g^{2^{a-1}}$, the $\pi$-rotation about $C_2$. Then the quotient of $\Sph$ by $\langle r\rangle$ is again $\Sph$, the quotient map $p$ is the double cover branched along $C_2$, and:
\begin{enumerate}
\item $K':=p(K)$ is a knot, and $p|_K\colon K\to K'$ is a free double cover;
\item the induced map $g'$ on the quotient is the standard rotary reflection of order $2^{a-1}$ with the same axis $C_2$ and rotation parameter $c\bmod 2^{a-1}$; $g'|_{K'}$ is free and orientation-preserving, and $K'\cap C_2=\emptyset$;
\item $\lk(K',C_2)=\lk(K,C_2)=\ell$ \textup{(}with the orientations induced by $p$\textup{)}.
\end{enumerate}
\end{proposition}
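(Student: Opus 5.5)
The plan is to work in explicit coordinates. Since $g^{2^{a-1}}$ multiplies $z_1$ by $e^{\pi i c}=-1$ and acts on $z_2$ by $\bar{\bar z}_2=z_2$ (an even power of conjugation), we have $r(z_1,z_2)=(-z_1,z_2)$. This is the $\pi$-rotation about $C_2=\{z_1=0\}$. For the quotient I will use the map
\[
p(z_1,z_2)=\frac{(z_1^2,\ \lambda(|z_1|) z_2)}{\|(z_1^2,\lambda(|z_1|) z_2)\|},
\]
or, more cleanly, the join description: write $\Sph=C_1 * C_2$. Then $r$ acts as $z_1\mapsto -z_1$ on the $C_1$ factor and trivially on $C_2$. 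Hence $\Sph/r\cong (C_1/\pm)*C_2\cong S^1*S^1=\Sph$, with $p$ given on the first factor by $z_1\mapsto z_1^2/|z_1|$. This is the standard double cover branched along the unknot $C_2$. Its branch set is $\Fix(r)=C_2$, which maps homeomorphically to the axis $C_2$ downstairs.

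Next I compute $g'$. Since $g$ commutes with $r$, it descends to $g'$. In the join coordinates, $g'$ acts on the first factor by $w_1=z_1^2/|z_1|\mapsto e^{2\pi i\cdot 2c/2^a}w_1=e^{2\pi i c/2^{a-1}}w_1$, and on $C_2$ by conjugation. This is the standard rotary reflection of order $2^{a-1}$ with parameter $c$ (reduced mod $2^{a-1}$) and the same axis. It is orientation-reversing because $p$ is a local orientation-preserving diffeomorphism off $C_2$ and $g$ reverses orientation. When $a-1=1$ we get the order-$2$ model of Lemma~\ref{lem:taxonomy}(2), which is still of the stated form.

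For (1), note that $K\cap C_2=\emptyset$ (Lemma~\ref{lem:taxonomy}(3)), so $p$ restricted to a neighborhood of $K$ is an honest covering. By Lemma~\ref{lem:taxonomy}(1), $r|_K$ is a free orientation-preserving involution of the circle $K$. So $K/r$ is a circle and $p|_K$ is a free double cover onto the embedded circle $K'$. Embeddedness holds because $p$ identifies exactly the $r$-orbits, and $K$ is $r$-invariant. The orientation of $K$ descends because $r|_K$ preserves it. Since $g$ commutes with $r$, the statement that $g'|_{K'}$ is orientation-preserving follows directly. Also $K'\cap C_2=p(K)\cap p(C_2)=\emptyset$, because $p^{-1}(C_2)=C_2$.

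Freeness of $g'|_{K'}$ is the step needing care. Suppose $g'^j$ fixed a point of $K'$ with $0<j<2^{a-1}$. Then $g^j$ or $g^jr=g^{j+2^{a-1}}$ would fix a point of $K$, with exponent strictly between $0$ and $2^a$. This contradicts Lemma~\ref{lem:taxonomy}(1).

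For (3), I use the page description from Lemma~\ref{lem:elleven}: $\lk(K,C_2)$ is the winding number of $\arg z_1$ along $K$, divided by $2\pi$. Under $p$ we have $\arg w_1=2\arg z_1$, so $\arg w_1$ winds $2\ell$ times along $K$, counted with $2\pi$ per turn. But $K$ traverses $K'$ twice, so the winding along $K'$ is $\ell$. The orientation of $C_2$ is the one induced by the homeomorphism $p|_{C_2}$, which is compatible with orienting the pages. This makes the sign consistent and gives $\lk(K',C_2)=\ell$.

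I expect the main obstacle to be making the join/branched-cover identification rigorous, including smoothness of $g'$ near $C_2$. However, only a homeomorphism-level conjugacy to the standard model is needed for everything downstream, so the join coordinates suffice.
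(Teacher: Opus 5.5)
Your proposal is correct and follows essentially the paper's route. Your join map $z_1\mapsto z_1^2/|z_1|$ is the paper's $(z_1^2,z_2)$ rescaled to the sphere, freeness again comes from the hypothesis on $g|_K$, and your doubling-of-$\arg z_1$ winding computation for (3) is the paper's signed count of intersections with a page in another guise. Your explicit lift of a fixed point of $g'^j$ to a fixed point of $g^j$ or $g^{j+2^{a-1}}$, with $0<j<2^{a-1}$, spells out a step the paper only asserts.
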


\begin{proof}
In the coordinates of the model, $r(z_1,z_2)=(-z_1,z_2)$, and the map $(z_1,z_2)\mapsto (z_1^2,z_2)$ (rescaled to the unit sphere) identifies $\Sph/\langle r\rangle$ with $\Sph$, branched along $C_2=\{z_1=0\}$. The induced symmetry sends $z_1^2\mapsto e^{2\pi i(2c)/2^a}z_1^2=e^{2\pi ic/2^{a-1}}z_1^2$ and $z_2\mapsto\bar z_2$, which is the standard model of order $2^{a-1}$ (if $a-1=1$ this is the strongly positive amphicheiral involution $(w_1,z_2)\mapsto(-w_1,\bar z_2)$ with $\Fix=S^0$), where $w_1=z_1^2$.

(1) $r|_K$ is free by Lemma~\ref{lem:taxonomy}(1), so $K'=K/\langle r\rangle$ is a circle and $p|_K$ is a free double cover. (2) $g'$ acts on the circle $K'=K/\langle r\rangle$ as the rotation induced by $g|_K$, i.e.\ with rotation number $c'\bmod 2^{a-1}$ on the quotient parameter: free, orientation-preserving, of full order $2^{a-1}$. Disjointness from $C_2$ is clear since $K\cap C_2=\emptyset$ and $p^{-1}(C_2)=C_2$.

(3) Let $D$ be a page of the open book of the quotient sphere with binding $C_2$, meeting $K'$ transversely. Its preimage $p^{-1}(D)$ is the union of the two pages of the upstairs open book lying over it, and $p$ maps $K\cap p^{-1}(D)$ bijectively onto pairs $\{x,r(x)\}$ over each point of $K'\cap D$: each intersection point downstairs has exactly two preimages, one on each upstairs page, forming a free $r$-orbit. Since $r$ preserves the orientations of $K$ and of the pages, signs are preserved, and counting an \emph{upstairs} page gives $\ell$ points with signs, matching the downstairs count: $\lk(K',C_2)=\ell$.
\end{proof}

\begin{definition}[The tower]\label{def:tower}
Given $(\Sph,K,g)$ as above with $g$ of order $2^a$, define $(\Sph,K_k,g_k)$ for $k=a,a-1,\dots,1$ by $(\Sph,K_a,g_a)=(\Sph,K,g)$ and $(\Sph,K_{k-1},g_{k-1})=$ quotient of $(\Sph,K_k,g_k)$ by $r_k:=g_k^{2^{k-1}}$, as in Proposition~\ref{prop:tower}. Set $L_k=K_k\cup C_2$ (ordered, with $C_2$ oriented so $\ell>0$), $V_k=\Sph\smallsetminus\nu(L_k)$, and $\knotpoly{k}:=\Conway_{K_k}(w)\in\Z[w]$, $\knotpoly{k}(0)=1$.
\end{definition}

By Proposition~\ref{prop:tower}, every level is again standard with the same $\ell$, and the base $K_1$ is invariant under a strongly positive amphicheiral involution.

\begin{lemma}[Base of the tower]\label{lem:HKsquare}
$\knotpoly{1}=\Conway_{K_1}=\varphi(w)^2$ for some $\varphi\in\Z[w]$ with $\varphi(0)=\pm1$.
\end{lemma}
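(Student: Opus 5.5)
The plan is to reduce the statement to the theorem of Hartley and Kawauchi \cite{HK} quoted in the introduction. By Definition~\ref{def:tower} and Proposition~\ref{prop:tower}(2), the base $K_1$ is invariant under the quotient map $g_1$, which is the standard rotary reflection of order $2$, namely $(w_1,z_2)\mapsto(-w_1,\bar z_2)$. This is an orientation-reversing involution of $\Sph$, and by Proposition~\ref{prop:tower}(2) its restriction to $K_1$ is free and orientation-preserving. So $(\Sph,K_1)$ is strongly positive amphicheiral, which is the content of Lemma~\ref{lem:taxonomy}(2).

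There is one hypothesis to check: Lemma~\ref{lem:taxonomy} assumes $K$ nontrivial. If $K_1$ is the unknot, we simply take $\varphi=1$. Otherwise \cite{HK} applies and gives $\Delta_{K_1}(t)\eqdot f(t)^2$ for some $f\in\Z[t^{\pm1}]$.

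Next I would convert this into the Conway normalization. Since $\Delta_{K_1}$ is symmetric, $f^2$ is symmetric up to a unit. So $f$ is symmetric up to a unit $\pm t^{j/2}$, that is, $f\eqdot \pm s^{m}F$ with $F$ invariant under $s\mapsto s^{-1}$ (here $s=t^{1/2}$). The factor $s^m$ must have $m$ even, because $\Delta$ is an integral Laurent polynomial in $t$ and the degree span of $f^2$ is even. This step needs some care: it requires $F\in\Z[t^{\pm1}]$ with $F(t^{-1})=F(t)$, and such $F$ lies in $\Z[t+t^{-1}]=\Z[u]=\Z[w]$ by \eqref{eq:uw}. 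We then get $\Conway_{K_1}=\pm\varphi(w)^2$.

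The sign is forced by $\Conway(0)=1$: we have $\pm\varphi(0)^2=1$, so the sign is $+$ and $\varphi(0)=\pm1$. This is the only real obstacle, and it is mild. It amounts to making sure the unit ambiguity in \cite{HK}'s equality $\Delta=f^2$ can be absorbed so that $\varphi$ is an honest element of $\Z[w]$. Alternatively, one could cite \cite{HK} directly in its Conway form $\Conway_K(z)=\varphi(z^2)^2$, as quoted in the introduction, and check only $\varphi(0)=\pm1$ from $\Conway(0)=1$.
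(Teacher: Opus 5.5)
Your reduction to \cite{HK} is fine, but there is a genuine gap in the conversion step. From $\Delta_{K_1}\eqdot f^2$ and the symmetry of $\Delta_{K_1}$ you only get $f(t^{-1})=\pm t^{k}f(t)$. Your rewriting $f\eqdot\pm s^mF$ with $F$ \emph{invariant} under $s\mapsto s^{-1}$ silently discards the minus sign. Moreover, your reason for $m$ being even is vacuous, since the $t$-span of $f^2$ is always twice that of $f$.

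Both omitted cases genuinely occur for squares:
\begin{itemize}
\item $f=t-1=s\,z$ has anti-invariant $F=z$ and $f^2\eqdot w$;
\item $f=1+t=s\,\xi$ has $m=1$ and $f^2\eqdot w+4$.
\end{itemize}
These are symmetric polynomials that are squares up to units, yet neither is $\pm\varphi(w)^2$ with $\varphi\in\Z[w]$. So the implication you assert is false as stated, and the final sign is not ``the only real obstacle.''

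What excludes these cases is the normalization $\Delta_{K_1}(1)=\pm1$. It must be used at this step, not only at the end. It gives $f(1)^2=1$, so $f(1)$ is odd.
\begin{itemize}
\item In the anti-symmetric case, setting $t=1$ in $f(t^{-1})=-t^kf(t)$ gives $f(1)=0$.
\item In the symmetric case with $m$ odd, setting $t=-1$ in $f(t^{-1})=t^{-m}f(t)$ gives $f(-1)=0$. Hence $(1+t)\mid f$ and $f(1)$ is even.
\end{itemize}
With these exclusions, the rest of your argument goes through: $F\in\Z[t^{\pm1}]$ is symmetric, hence lies in $\Z[u]=\Z[w]$.

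The paper disposes of both cases at once by using $\sigma\colon s\mapsto -s^{-1}$ in place of $s\mapsto s^{-1}$. With $P=s^df(s^2)$ and $P^2=\Conway_{K_1}$, one has $\sigma(P)=\pm P$. The anti-invariant case $P\in\xi\Z[z]$ gives $\Conway_{K_1}(0)\in 4\Z$. In the invariant case, write $P=g(w)+z\,h(w)$; then $P^2\in\Z[w]$ forces $gh=0$, while $P(0)^2=1$ forces $g\neq 0$, so $h=0$.

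Your fallback of quoting \cite{HK} ``in Conway form'' is not an independent route. The paper attributes that rephrasing to \cite{Con}, and supplying it is the entire content of this lemma.
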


\begin{proof}
By Hartley--Kawauchi \cite{HK}, $\Delta_{K_1}(t)\eqdot f(t)^2$ for some $f\in\Z[t]$. (If $K_1$ is the unknot the claim is trivial.) The rephrasing in terms of $\Conway$ appears in \cite{Con}; we include the one-paragraph argument. Write $\Conway_{K_1}(z)=\varepsilon\, t^{d}f(t)^2$ in $\Z[s^{\pm1}]$, $s=t^{1/2}$; evaluating at $t=1$ (i.e.\ $z=0$) gives $\varepsilon f(1)^2=1$, so $\varepsilon=1$ and $P(s):=s^{d}f(s^2)\in\Z[s^{\pm1}]$ satisfies $P^2=\Conway_{K_1}$. Applying the involution $\sigma(s)=-s^{-1}$, which fixes $\Conway_{K_1}\in\Z[w]$, gives $\sigma(P)=\pm P$. The fixed ring of $\sigma$ is $\Z[z]$ and the anti-invariants form the free $\Z[z]$-module generated by $\xi=s+s^{-1}$. In the anti-invariant case $P=\xi\,q(z)$ and $\Conway_{K_1}=(w+4)q^2$, contradicting $\Conway_{K_1}(0)=1$. Hence $P=P(z)\in\Z[z]$ with $P(z)^2\in\Z[w]$ and $P(0)^2=1$; comparing parities, $P$ is an even polynomial: $P=\varphi(w)$.
\end{proof}

\section{Level-wise identities}\label{sec:levelwise}

Throughout this section fix $1\le k\le a$ and write $L=L_k$, $V=V_k$, $\Delta=\Delta_{L_k}(t,y)$, suppressing $k$. Note $\Delta\neq0$: by \eqref{eq:torres}, $\Delta(t,1)\eqdot\qint{\ell}\knotpoly{k}\neq0$ since $\ell\neq0$.

\subsection{The covering factorization}\label{subsec:covering}

\begin{proposition}\label{prop:covering}
For $2\le k\le a$,
\[
\Delta_{L_k}(t,\,y^2)\;\eqdot\;\Delta_{L_{k-1}}(t,\,y)\cdot\Delta_{L_{k-1}}(t,\,-y),
\]
up to units $\pm t^\alpha y^\beta$.
\end{proposition}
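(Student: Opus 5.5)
The approach is to treat $V_k$ as an unbranched double cover of $V_{k-1}$ and compute Reidemeister torsion through the covering, using the torsion description of $\Delta_L$ from \S\ref{sec:conventions}.

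\textbf{Step 1: the covering and its effect on $H_1$.} Removing the branch locus $C_2$ and the knot, the branched cover $p$ of Proposition~\ref{prop:tower} restricts to an unbranched double cover $p\colon V_k\to V_{k-1}$ with deck group $\langle r\rangle$. I will compute $p_*\colon H_1(V_k)\cong\Z^2\to H_1(V_{k-1})\cong\Z^2$ on meridians.
\begin{itemize}
\item A meridian $\mu_K$ of $K_k$ maps homeomorphically onto a meridian of $K_{k-1}$, because $p|_K$ is a free double cover and $p$ is a local homeomorphism near $K$. So $t\mapsto t$.
\item A meridian of $C_2$ upstairs double-covers a meridian downstairs, since the cover branches along $C_2$. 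So $y_k\mapsto y_{k-1}^2$.
\end{itemize}
The cover corresponds to the index-two subgroup of $\pi_1(V_{k-1})$ given by the kernel of the character $\chi\colon H_1(V_{k-1})\to\Z/2$, with $t\mapsto 0$ and $y\mapsto 1$. To check this, note that $\chi$ counts linking with $C_2$ mod $2$, and the cover is classified by the meridian of $C_2$ not lifting.

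\textbf{Step 2: transfer of the Alexander complex.} The universal abelian cover $\tilde V_k$ is then an intermediate cover of $V_{k-1}$, namely the $\Z^2$-cover with group $\ker\chi\cong\Z\langle t\rangle\oplus\Z\langle y^2\rangle$. So $\tilde V_k$ is the cover of $V_{k-1}$ corresponding to the composite $\pi_1(V_{k-1})\to\Z^2$. More precisely, $\tilde V_{k-1}$ and $\tilde V_k$ coincide as spaces: the universal abelian cover of $V_{k-1}$ restricted to the subgroup $\ker\chi$ is an abelian cover of $V_k$ with group $\Z^2$. I will check that it is the universal one, i.e.\ that $p_*$ has image exactly $\ker\chi$. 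Hence
\[
C_*(\tilde V_k)\;=\;\operatorname{Res}^{\Lambda}_{\Lambda'} C_*(\tilde V_{k-1}),\qquad \Lambda=\Z[t^{\pm},y^{\pm}],\ \Lambda'=\Z[t^{\pm},y^{\pm2}],
\]
with upstairs variable $Y=y^2$. $\Lambda$ is free of rank $2$ over $\Lambda'$ with basis $1,y$.

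\textbf{Step 3: the torsion norm formula.} For a based acyclic complex over the fraction field $F$ of $\Lambda$, restricting scalars to $F'=\operatorname{Frac}\Lambda'$ multiplies torsion by the norm map: $\tau_{F'}=N_{F/F'}(\tau_F)$ up to units. This holds because a matrix over $F$ viewed over $F'$ has determinant equal to the norm of its determinant. Since $F/F'$ is the Galois quadratic extension $y\mapsto -y$, we get
\[
N(\Delta_{L_{k-1}}(t,y))=\Delta_{L_{k-1}}(t,y)\Delta_{L_{k-1}}(t,-y).
\]
Combining this with $\tau(V_j)\eqdot\Delta_{L_j}$ (valid since both $\Delta$'s are nonzero by \eqref{eq:torres}, as noted at the start of this section) and $Y=y^2$ gives the claim up to units of $\Lambda'$. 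The unit ambiguity $\pm t^\alpha y^{\beta}$ is absorbed.

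\textbf{Main obstacle.} I expect the main obstacle to be Step 2's identification that $p_*(H_1(V_k))=\ker\chi$ and that the pulled-back cover is the universal abelian cover of $V_k$ rather than a quotient of it. I would verify this by checking that $p_*$ is injective with image of index $2$ (determinant $2$ from Step 1), which follows from the meridian computation together with $H_1(V_k)$ being generated by meridians.

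A secondary check is that a cellular basis lifted from $V_{k-1}$ gives a legitimate geometric basis for $V_k$. This holds because lifting cells of a CW structure on $V_{k-1}$ gives a CW structure on $V_k$ whose cells are the lifts $e,\,ye$. That is precisely the $\Lambda'$-basis used in the restriction of scalars.
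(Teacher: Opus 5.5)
Your argument is correct. Steps 1--2 match the paper's identification of the covers (Lemma~\ref{lem:sharedcover}). The real difference is in the key algebraic step.

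The paper cites the Shapiro-type theorem \cite[Thm.~3.8]{FV}: the invariant of a finite cover equals that of the base twisted by the regular representation. It then splits the regular representation of $\Z/2$ over $\Q(t,y)$ into the trivial and sign characters. Finally, it argues separately that the cover's invariant, computed in the pulled-back lattice $\langle t,y^2\rangle$, really is $\Delta_{L_k}(t,y^2)$.

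You prove this special case by hand:
\begin{itemize}
\item the Alexander complex of $V_k$ is literally the restriction of scalars of that of $V_{k-1}$ along $\Lambda'\subset\Lambda$;
\item since $\Lambda\otimes_{\Lambda'}F'=F'[y]/(y^2-Y)=F$, acyclicity and the lifted basis $\tilde e,\,y\tilde e$ carry over;
\item $\det_{F'}$ of a restricted $F$-linear map is $N_{F/F'}$ of its $F$-determinant, and $N_{F/F'}(f)=f(t,y)f(t,-y)$.
\end{itemize}
These are two faces of one fact: $F\otimes_{F'}F\cong F\times F$ via $y\mapsto\pm y$ is exactly the splitting of the regular representation.

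Your version buys the following:
\begin{itemize}
\item it is self-contained and needs no twisted invariants;
\item the lattice issue disappears, because the upstairs variable $Y=y^2$ acts on the very same complex;
\item it gives the slightly sharper ambiguity $\pm t^{\alpha}y^{2\beta}$;
\item it extends directly to Lemma~\ref{lem:regrep}, using $\Lambda'=\Z[t^{\pm1},y^{\pm N}]$ and the Kummer extension over $\Q(\zeta_N)$.
\end{itemize}
The paper's citation buys brevity and the general framework (any finite cover, any representation, cf.\ \cite{HKL}).

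Two small points.
\begin{itemize}
\item What makes $W\to V_k$ the universal abelian cover is injectivity of $p_*$ on $H_1(V_k)$. That its image is $\ker\chi$ is automatic. Your last paragraph correctly supplies the injectivity check.
\item You should say why the meridian of $C_2$ goes to $y^2$ rather than $y^{-2}$. The reason is that $p$ preserves orientation and the orientation of $C_2$ downstairs is the one induced from upstairs.
\end{itemize}
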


This is a standard principle. Compare ~\cite[Corollary 4.2]{HKL}  which computes the Alexander polynomial of a cyclic cover of a knot as the product of Alexander polynomials of the knot evaluated at the $p$th roots of unity times the pth root of the variable. Here we take the double cover along the axis direction, yielding a product over the two square roots of unity similar to that story. The same mechanism underlies the classical congruences
for periodic knots and links (Fox, Murasugi) and Hartley’s treatment of free periods \cite{H2},
the orientation-preserving sibling of the present setup. Before proving Proposition~\ref{prop:covering}, we carefully set up the covering spaces and the homology of their universal abelian covers.

\subsubsection*{Fixed structures}
Write $V=V_k$, $V'=V_{k-1}$, and $p\colon V\to V'$ for the restriction of the branched covering of Proposition~\ref{prop:tower}. Since $r_k$ acts freely on $V$ (its fixed set $C_2$ has been removed) and $p^{-1}(V')=V$, the map $p$ is a free double covering of compact $3$-manifolds with boundary. Fix once and for all a triangulation $T'$ of $V'$; all covers of $V'$ below carry the lifted triangulations, and all torsions are computed from the resulting simplicial chain complexes, so that no invariance of torsion under change of CW structure is ever invoked (any two choices of $T'$ are related through common subdivisions, under which all quantities below transform identically).

Let $\phi\colon\pi_1(V')\twoheadrightarrow H_1(V')=\langle t\rangle\oplus\langle y\rangle\cong\Z^2$ be the abelianization, the generators being the oriented meridians of $K_{k-1}$ and $C_2$, and let $W\to V'$ be the corresponding universal abelian cover, with a fixed identification $\mathrm{Deck}(W/V')=\langle t,y\rangle$. Write $B:=\Z[t^{\pm1},y^{\pm1}]$ and $A:=\Z[t^{\pm1},x^{\pm1}]\subset B$ with $x:=y^2$, so that $B=A\oplus yA$ is free of rank $2$ over $A$.

\begin{lemma}[Identification of the covers]\label{lem:sharedcover}
\leavevmode
\begin{enumerate}
\item The double cover $p\colon V\to V'$ is classified by the character $\chi\colon\pi_1(V')\to\Z/2$, $\chi(t)=0$, $\chi(y)=1$ \textup{(}mod-$2$ linking with $C_2$\textup{)}; equivalently $\pi_1(V)=\phi^{-1}\langle t,y^2\rangle$.
\item On first homology, $p_*\colon H_1(V)=\langle t\rangle\oplus\langle x\rangle\to H_1(V')$ sends the meridian of $K_k$ to $t$ and the meridian of $C_2$ upstairs to $y^2$; in particular $p_*$ is injective with image $\langle t,y^2\rangle$.
\item The composite $W\to V'$ factors as $W\to V\xrightarrow{p} V'$, and $W\to V$ is the universal abelian cover of $V$, with $\mathrm{Deck}(W/V)=\langle t,y^2\rangle$ identified with $H_1(V)$ by $t\mapsto t$, $x\mapsto y^2$.
\end{enumerate}
\end{lemma}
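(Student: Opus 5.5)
The plan is to prove (2) first, since (1) and (3) follow from it by covering-space theory.

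For (2), work with the branched cover $p\colon S^3\to S^3$, $(z_1,z_2)\mapsto(z_1^2,z_2)$ (rescaled), restricted to the exteriors. I will compute $p_*$ on the two meridians by local models. A small meridian disk of $K_k$ misses $C_2$, and $r_k$ acts freely near $K_k$, so it maps homeomorphically onto a meridian disk of $K_{k-1}$. The orientation is preserved because $p$ preserves ambient orientation and, by Proposition~\ref{prop:tower}, the orientation of the knot. Hence this meridian goes to $t$.

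A meridian circle $\mu$ of $C_2$ is a small loop $\{|z_1|=\epsilon\}$ at fixed $z_2$. It maps to the loop $z_1^2$, which winds twice around $C_2$. With the orientations of $C_2$ induced as in Proposition~\ref{prop:tower}, this gives $y^2$.

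Since $H_1(V)\cong\Z^2$ has basis these two meridians, $p_*$ is the map $(m,n)\mapsto t^m y^{2n}$. It is injective with image $\langle t,y^2\rangle$. The one point needing care is that the sign convention for the upstairs orientation of $C_2$ is consistent with $\ell>0$ at both levels. That consistency is exactly Proposition~\ref{prop:tower}(3), since $\lk(K_k,C_2)=\lk(K_{k-1},C_2)=\ell$.

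For (1): a connected double cover $V\to V'$ corresponds to the index-two subgroup $p_*\pi_1(V)\subset\pi_1(V')$. Because $\Z/2$ is abelian, this subgroup contains the commutator subgroup. It is therefore $\phi^{-1}$ of an index-two subgroup of $H_1(V')$, namely the image of $H_1(V)$ under $p_*$. To see this, note that the image of $\pi_1(V)$ in $H_1(V')$ equals $p_*H_1(V)$, and the preimage of this image is the subgroup itself, because the subgroup already contains the commutator subgroup. By (2) the image is $\langle t,y^2\rangle$, so the classifying character is $\chi(t)=0$, $\chi(y)=1$. This character is the mod-$2$ linking number with $C_2$.

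For (3): the composite $\pi_1(V)\hookrightarrow\pi_1(V')\xrightarrow{\phi}H_1(V')$ has kernel $\ker\phi\cap\pi_1(V)=\ker\phi=[\pi_1V',\pi_1V']$. We must compare this with $[\pi_1V,\pi_1V]$. The composite factors as $\pi_1(V)\to H_1(V)\xrightarrow{p_*}H_1(V')$, and $p_*$ is injective by (2). Hence the kernel is exactly $[\pi_1V,\pi_1V]$.

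It follows that the cover $W\to V'$ corresponding to $\ker\phi$ factors through $V$, and $W\to V$ is the cover corresponding to $[\pi_1V,\pi_1V]$, that is, the universal abelian cover. Its deck group is $\pi_1(V)/[\pi_1V,\pi_1V]=H_1(V)$. Under inclusion into $\mathrm{Deck}(W/V')=H_1(V')$, this deck group is $\langle t,y^2\rangle$ via $p_*$, which gives the stated identification $x\mapsto y^2$.

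The main obstacle is the meridian computation in (2), specifically pinning down the orientation and the factor $2$ near the branch locus. I would handle it by the explicit coordinate model $z_1\mapsto z_1^2$ near $C_2$ rather than by abstract arguments.
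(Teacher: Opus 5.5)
Your proposal is correct and follows essentially the same route as the paper: the local model $z_1\mapsto z_1^2$ computes the meridian images in (2), and for (3) $\phi|_{\pi_1(V)}$ factors through $H_1(V)$ via the injective $p_*$, so its kernel is $[\pi_1V,\pi_1V]$. The only difference is the order in (1). You deduce (1) from (2) by noting that the index-two subgroup contains $[\pi_1V',\pi_1V']$ and hence equals $\phi^{-1}(p_*H_1(V))$, whereas the paper reads off $\chi$ directly from whether each meridian lifts to a loop or to an arc.
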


\begin{proof}
(1) A free double quotient by the involution $r_k$ is classified by the character sending a loop to its $r_k$-parity; a meridian of $K_{k-1}$ lifts to a loop (the covering is trivial near $K$, which is disjoint from the branch locus), while a meridian of $C_2$ lifts to an arc, by the local model below. (2) Near a point of $C_2$, choose local coordinates in which $p$ is $(z_1,\zeta)\mapsto(z_1^2,\zeta)$, $z_1\in\C$, $\zeta\in\R$, with $C_2=\{z_1=0\}$. The meridian $\{|z_1|=\epsilon\}$ upstairs maps to the meridian $\{|w_1|=\epsilon^2\}$ downstairs by angle-doubling, a loop of degree $2$; orientations are preserved since $p$ is a local orientation-preserving map away from the branch locus and the meridian orientation conventions upstairs and downstairs are both induced by the (compatible) orientations of $C_2$. The meridian of $K_k$ maps homeomorphically to a meridian of $K_{k-1}$. This proves (2), and completes (1).
(3) By (1), $\pi_1(W)=\ker\phi\subset\phi^{-1}\langle t,y^2\rangle=\pi_1(V)$, so $W\to V$ is a covering with deck group $\phi(\pi_1(V))=\langle t,y^2\rangle$. The surjection $\phi|\colon\pi_1(V)\to\langle t,y^2\rangle\cong\Z^2$ factors through $H_1(V)\cong\Z^2$ via $p_*$, which is injective by (2); a surjection $\Z^2\to\Z^2$ factoring through an injection of $\Z^2$ is an isomorphism at both stages, so $\ker(\phi|_{\pi_1(V)})$ equals the commutator subgroup of $\pi_1(V)$; that is, $W\to V$ is the universal abelian cover, with the stated deck identification.
\end{proof}


\begin{proof}[Proof of Proposition~\ref{prop:covering}]
We recall the Shapiro-type theorem for twisted Alexander polynomials and torsions, \cite[Thm.~3.8]{FV}: for a finite cover $\widehat N\to N$, the invariants of $\widehat N$ agree with those of $N$ twisted by the induced representation. As called out specifically in that paper, in its simplest form, we have an epimorphism $\gamma\colon\pi_1(N)\to G$ onto a finite group, in which case the polynomial of $N$ twisted by the regular representation $\Z[G]\circ\gamma$ is the untwisted polynomial of the $\gamma$-cover. 
Apply this with $N=V_{k-1}$, $\widehat N=V_k$, $G=\Z/2$ and $\gamma=\chi$ the mod-$2$ linking character of Lemma~\ref{lem:sharedcover}(1). Two supplements are then needed. 
First, over $\Q(t,y)$ (characteristic $\ne2$) the regular representation of $\Z/2$ splits as $\mathbf1\oplus\mathrm{sgn}$, and the $\mathrm{sgn}$-twist is the substitution $y\mapsto-y$ on coefficients, giving the product $\Delta_{L_{k-1}}(t,y)\Delta_{L_{k-1}}(t,-y)$. 
Secondly, and more subtly, the cover's polynomial is produced by \cite{FV} with respect to the \emph{pulled-back} variable lattice $\phi\circ p_*$, with image $\langle t,y^2\rangle$; Lemma~\ref{lem:sharedcover}(2)--(3) identifies this sublattice with the full abelianization $H_1(V_k)=\langle t,x\rangle$ via $x=y^2$, so that no information is collapsed and the left-hand side is indeed the two-variable polynomial $\Delta_{L_k}(t,y^2)$. 
(Extension of scalars along $\Z[t^{\pm},x^{\pm}]\subset\Z[t^{\pm},y^{\pm}]$ does not alter gcd's, the only irreducible split by $y\mapsto-y$ in the Laurent ring being the unit $y$.) Note the indeterminacy in \cite[Thm.~3.8]{FV} is up to multiplication by units $\pm t^\Z y^\Z$. 
\end{proof}
\begin{corollary}\label{cor:nonvanishing}
For $1\le k\le a-1$, $\Delta_{L_k}(t,-1)\neq0$.
\end{corollary}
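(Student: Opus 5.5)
We derive this directly from Proposition~\ref{prop:covering} applied one level up, combined with the Torres specialization \eqref{eq:torres}. Fix $1\le k\le a-1$. Then $2\le k+1\le a$, so Proposition~\ref{prop:covering} at level $k+1$ gives
\[
\Delta_{L_{k+1}}(t,y^2)\;\eqdot\;\Delta_{L_k}(t,y)\,\Delta_{L_k}(t,-y)
\]
in $\Z[t^{\pm1},y^{\pm1}]$, up to a unit $\pm t^\alpha y^\beta$. This is an identity of Laurent polynomials, so we may substitute $y=1$. The unit becomes $\pm t^\alpha$, which is nonzero, and we get
\[
\Delta_{L_{k+1}}(t,1)\;\eqdot\;\Delta_{L_k}(t,1)\,\Delta_{L_k}(t,-1).
\]

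Next apply \eqref{eq:torres} to the left side. By Proposition~\ref{prop:tower}(3) the linking number of $K_{k+1}$ with $C_2$ is still $\ell$, so
\[
\Delta_{L_{k+1}}(t,1)\eqdot\qint{\ell}\,\knotpoly{k+1}.
\]
This is nonzero in $\Z[t^{\pm1}]$: $\qint{\ell}$ takes the value $\ell\neq0$ at $t=1$, and $\knotpoly{k+1}(0)=1$. Since $\Z[t^{\pm1}]$ is an integral domain, neither factor on the right of the previous display can vanish. In particular $\Delta_{L_k}(t,-1)\neq0$. As a by-product, $\Delta_{L_k}(t,1)\eqdot\qint\ell\,\knotpoly k$ is also confirmed to be nonzero, which the paper already noted.

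The only delicate point is that Proposition~\ref{prop:covering} is an equality of honest elements of the Laurent ring up to units. It is not merely an equality of ideals or of gcds after some localization in which $y+1$ is inverted; that would make the specialization $y=1$ illegitimate. The proof of Proposition~\ref{prop:covering} via \cite[Thm.~3.8]{FV} gives the identity in $\Q(t,y)$ up to units $\pm t^\Z y^\Z$. Both sides are Laurent polynomials, so the identity holds in $\Z[t^{\pm1},y^{\pm1}]$ itself, and specialization at $y=1$ is a ring homomorphism. Beyond this check, which is the main obstacle in the argument, the proof is immediate.
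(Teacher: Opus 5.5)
Your proof is correct and is the paper's argument: specialize Proposition~\ref{prop:covering} at level $k+1$ to $y=1$, then use Torres to see that the left side $\qint{\ell}\knotpoly{k+1}$ is nonzero, which forces $\Delta_{L_k}(t,-1)\neq0$. Your remark that the covering identity holds in the Laurent ring up to units $\pm t^\alpha y^\beta$, so that setting $y=1$ is legitimate, is a sound check that the paper leaves implicit.
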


\begin{proof}
Specialize Proposition~\ref{prop:covering} (at level $k+1$) at $y=1$:
$\Delta_{L_{k+1}}(t,1)\eqdot\Delta_{L_k}(t,1)\,\Delta_{L_k}(t,-1)$, and the left side is $\qint{\ell}\knotpoly{k+1}\neq0$.
\end{proof}

\subsection{Symmetry and duality}

\begin{proposition}\label{prop:symmetry}
$\Delta_{L_k}(t^{-1},y)\;\eqdot\;\Delta_{L_k}(t,y)$.
\end{proposition}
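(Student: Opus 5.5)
The plan is to apply the semilinear functoriality \eqref{eq:functoriality} to the homeomorphism $F:=g_k|_{V_k}$ of the exterior $V=V_k$. Here $g_k$ is the rotary reflection of order $2^k$ at level $k$. For $k=1$ it is the strongly positive amphicheiral involution $(w_1,z_2)\mapsto(-w_1,\bar z_2)$, which also preserves $C_2=\{w_1=0\}$ and $K_1$. In every case $g_k$ preserves $K_k$ and $C_2$ setwise, so it restricts to a self-homeomorphism of $V$. It then suffices to compute the induced automorphism $\theta$ of $H_1(V)=\langle t\rangle\oplus\langle y\rangle$ and to check that $\theta(t)=t^{-1}$ and $\theta(y)=y$. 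Once this is done, \eqref{eq:functoriality} gives $\Delta(t^{-1},y)\eqdot\Delta(t,y)$ directly.

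The computation of $\theta$ uses the rule that a meridian of an oriented component is determined by the orientation of that component together with the ambient orientation: it is the loop linking the component $+1$. Consider first the meridian of $K_k$. The map $g_k$ preserves the orientation of $K_k$, by Lemma~\ref{lem:taxonomy} and Proposition~\ref{prop:tower}(2), but it reverses the orientation of $\Sph$. Since linking numbers change sign under an orientation-reversing map, $g_k$ carries the positive meridian of $K_k$ to a loop with linking number $-1$ with $K_k$ and $0$ with $C_2$, so $\theta(t)=t^{-1}$. Now consider the meridian of $C_2$. As noted after Lemma~\ref{lem:taxonomy}, $g_k|_{C_2}$ is a reflection of the circle, so it reverses the orientation of $C_2$. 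The image $\mu'$ of a positive meridian $\mu$ of $C_2$ therefore satisfies $\lk(\mu',-C_2)$ with respect to the reversed ambient orientation equal to $\lk(\mu,C_2)=1$. The two sign changes cancel, giving $\lk(\mu',C_2)=+1$, and $\mu'$ is still unlinked from $K_k$. Hence $\theta(y)=y$. One can also confirm this in coordinates: near $C_2=\{z_1=0\}$, $g$ acts on the normal disk $z_1$ by a rotation (degree $+1$) and on the axis direction by reflection.

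The step that needs care is the cross-term bookkeeping, namely confirming that $\theta$ is diagonal. Since $H_1(V)$ is detected by linking numbers with the two components, I would evaluate $\theta$ by computing $\lk(F(\mu),K_k)$ and $\lk(F(\mu),C_2)$ for each meridian $\mu$, using $\lk_{\bar\Sph}(F\alpha,F\beta)=-\lk(\alpha,\beta)$ for an orientation-reversing $F$. The off-diagonal linking numbers are $0$ and are preserved up to sign, so they stay $0$. The main obstacle I expect is the sign conventions, in particular making sure that the orientation reversal of $C_2$ by $g$ really cancels the ambient reversal rather than compounding it. The local model check above is how I would settle that. After this, the conclusion is immediate from \eqref{eq:functoriality}, and it holds up to units $\pm t^\alpha y^\beta$.
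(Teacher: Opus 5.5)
Your proposal is correct and matches the paper's proof: restrict $g_k$ to $V_k$, use the ambient orientation reversal to get $g_*t=t^{-1}$, and use the cancellation of that reversal against the reversal of $C_2$ to get $g_*y=y$, then apply \eqref{eq:functoriality}. Your additional checks go slightly beyond what the paper writes out but do not change the argument: the off-diagonal linking numbers vanish, the local model near $C_2$ confirms the sign, and the case $k=1$ is covered by the involution $(w_1,z_2)\mapsto(-w_1,\bar z_2)$.
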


\begin{proof}
The rotary reflection $g_k$ preserves $L_k$ and restricts to a self-homeomorphism of $V_k$; we compute its action on $H_1(V_k)$. For the meridian $\mu$ of $K_k$: $\lk\big(g_*\mu,K_k\big)=\lk\big(g_*\mu,g(K_k)\big)=-\lk(\mu,K_k)=-1$, since $g$ reverses the ambient orientation and preserves $K_k$ with its orientation; hence $g_*t=t^{-1}$. For the meridian $\mu_A$ of $C_2$: $g$ reverses the orientation of $C_2$, so $\lk(g_*\mu_A,C_2)=-\lk(g_*\mu_A,g(C_2))=+\lk(\mu_A,C_2)=+1$, hence $g_*y=y$. Now apply \eqref{eq:functoriality}.
\end{proof}

Combining Proposition~\ref{prop:symmetry} with Torres duality \eqref{eq:duality}:

\begin{corollary}[Axis palindromicity]\label{cor:palindromic}
$\Delta_{L_k}(t,y^{-1})\;\eqdot\;\Delta_{L_k}(t,y)$.
\end{corollary}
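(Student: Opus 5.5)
The plan is to compose the two symmetries already in hand. Proposition~\ref{prop:symmetry} gives $\Delta_{L_k}(t^{-1},y)\eqdot\Delta_{L_k}(t,y)$, and Torres duality \eqref{eq:duality} gives $\Delta_{L_k}(t^{-1},y^{-1})\eqdot\Delta_{L_k}(t,y)$. The idea is to substitute one into the other, remembering that both hold as identities in $\Z[t^{\pm1},y^{\pm1}]$ up to units $\pm t^\alpha y^\beta$.

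\textbf{Step 1.} Apply the ring automorphism $t\mapsto t^{-1}$ (fixing $y$) to the duality relation. It sends units $\pm t^\alpha y^\beta$ to units, so it preserves $\eqdot$. This gives
\[
\Delta_{L_k}(t,y^{-1})\eqdot\Delta_{L_k}(t^{-1},y).
\]

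\textbf{Step 2.} The right-hand side is $\eqdot\Delta_{L_k}(t,y)$ by Proposition~\ref{prop:symmetry}. Since $\eqdot$ is transitive (the units form a group), we conclude
\[
\Delta_{L_k}(t,y^{-1})\eqdot\Delta_{L_k}(t,y).
\]

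The only point needing care is that these symmetries are legitimate identities in the Laurent ring, not merely valid after some specialization. For duality this is Milnor's theorem, valid since $\Delta_{L_k}\neq0$, as noted at the start of \S\ref{sec:levelwise}. For Proposition~\ref{prop:symmetry} it is the functoriality \eqref{eq:functoriality} applied to $g_k$, which induces the automorphism $t\mapsto t^{-1}$, $y\mapsto y$ of $H_1(V_k)$. Both are already established, so no real obstacle is expected; at most one should remark that substitution automorphisms of $\Z[t^{\pm1},y^{\pm1}]$ respect the unit indeterminacy.
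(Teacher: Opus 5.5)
Your proposal is correct and is exactly the paper's argument: the corollary is obtained by composing Torres duality \eqref{eq:duality} with the semilinear symmetry $\Delta_{L_k}(t^{-1},y)\eqdot\Delta_{L_k}(t,y)$ of Proposition~\ref{prop:symmetry}. Your explicit remark that the substitution $t\mapsto t^{-1}$ carries units $\pm t^\alpha y^\beta$ to units is the only bookkeeping needed, and the paper leaves it implicit.
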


We emphasize that Corollary~\ref{cor:palindromic} is where the symmetry enters the algebra: duality alone inverts both variables simultaneously, and only its composition with the semilinear symmetry of Proposition~\ref{prop:symmetry} separates the axis variable.

\subsection{Palindromic normal form}

\begin{theorem}\label{thm:normalform}
Let $1\le k\le a-1$. There exists $Q_k\in\Z[u,v]$ with
\[
\Delta_{L_k}(t,y)\;\eqdot\;Q_k\big(t+t^{-1},\,y+y^{-1}\big),
\]
and $Q_k$ may be normalized \textup{(}replacing $Q_k$ by $-Q_k$ if necessary\textup{)} so that
\[
Q_k(w+2,\,2)\;=\;\qint{\ell}\;\knotpoly{k}(w) \qquad\text{exactly in }\Z[w] .
\]
\end{theorem}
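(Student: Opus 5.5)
The plan is to upgrade the two ``up to units'' symmetries, Proposition~\ref{prop:symmetry} and Corollary~\ref{cor:palindromic}, to \emph{exact} invariances of a single well-chosen representative $\Delta_0=\pm t^\alpha y^\beta\Delta_{L_k}$. Once that is done, I would use the fact that the subring of $\Z[t^{\pm1},y^{\pm1}]$ fixed by the Klein four-group generated by $t\mapsto t^{-1}$ and $y\mapsto y^{-1}$ is $\Z[t+t^{-1}]\otimes\Z[y+y^{-1}]=\Z[u,v]$. This is standard: the fixed ring of $t\mapsto t^{-1}$ in $\Z[t^{\pm1}]$ is $\Z[u]$, since $t^n+t^{-n}$ is a monic integer polynomial in $u$, and the two actions are on separate tensor factors. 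So $\Delta_0=Q_k(u,v)$ for some $Q_k\in\Z[u,v]$.

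\textbf{Unit bookkeeping.} By Proposition~\ref{prop:symmetry} we have $\Delta(t^{-1},y)=\eps\,t^m y^n\Delta(t,y)$ for some sign $\eps$ and integers $m,n$. Substituting $t\mapsto t^{-1}$ and applying the identity again gives $\Delta=\eps^2 t^{-m}t^{m}y^{2n}\Delta=y^{2n}\Delta$. Since $\Delta\neq0$, this forces $n=0$.

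Next I would specialize at $y=1$. By \eqref{eq:torres}, $\Delta(t,1)=\pm t^c P(t)$, where $P=\qint{\ell}\knotpoly{k}$ is a nonzero Laurent polynomial invariant under $t\mapsto t^{-1}$. This holds because $\ell$ is odd (Lemma~\ref{lem:elleven}), so $\qint{\ell}\in\Z[u]$, and also $\knotpoly{k}\in\Z[w]=\Z[u]$. Comparing $t^{-c}P=\eps\,t^{m+c}P$ then gives $m=-2c$ and $\eps=+1$.

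The symmetric argument handles the $y$-direction. Corollary~\ref{cor:palindromic} gives $\Delta(t,y^{-1})=\eps' t^{m'}y^{n'}\Delta$. The same double-application trick forces $m'=0$. I would then specialize at $t=1$ using Torres for the other component, which is the unknot $C_2$: this gives $\Delta(1,y)\eqdot (y^\ell-1)/(y-1)$, which is nonzero and palindromic up to the centering shift $y^{(\ell-1)/2}$ because $\ell$ is odd. Hence $n'$ is even and $\eps'=1$. Setting $\alpha=m/2$ and $\beta=n'/2$, the element $\Delta_0=t^{\alpha}y^{\beta}\Delta$ is then exactly invariant under each inversion separately. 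I would double-check that multiplying by $t^\alpha$ does not disturb the $y$-invariance, and vice versa; this holds because the exponents $m'=0$ and $n=0$ vanish.

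\textbf{Normalization.} With $\Delta_0=Q_k(u,v)$ in hand, I would set $y=1$, so that $v=2$ and $u=w+2$. Then $Q_k(w+2,2)=\Delta_0(t,1)$, which is a $t\mapsto t^{-1}$-invariant Laurent polynomial that agrees with $\qint{\ell}\knotpoly{k}$ up to a unit $\pm t^{j}$. Since two nonzero invariant Laurent polynomials differing by $\pm t^{j}$ must have $j=0$, the only remaining ambiguity is the sign. I would fix it by replacing $Q_k$ with $-Q_k$ if necessary, using $\knotpoly{k}(0)=1$ and $\qint{\ell}(0)=\ell>0$.

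I expect the only real obstacle to be this unit bookkeeping: checking that the exponent shifts are even, so that one can center at integer exponents, and that the signs $\eps,\eps'$ are $+1$ rather than $-1$. A sign $-1$ would make $\Delta_0$ anti-invariant and give an extra factor like $t-t^{-1}$. The nonvanishing and visible palindromicity of the two Torres specializations (at $y=1$ and at $t=1$) is exactly what rules out both problems. I do not expect the hypothesis $k\le a-1$ to be needed for this argument.
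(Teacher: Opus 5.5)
Your proposal is correct. It differs from the paper's proof at the one step that matters: showing that the $y$-exponent shift in the palindromicity relation is even.

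\textbf{The paper's route.} The paper writes $\Delta(t,y^{-1})=\eta\,t^{\alpha}y^{-m}\Delta(t,y)$ and specializes at $y=1$ to get $\eta=1$ and $\alpha=0$. It then specializes at $y=-1$, where $m$ even follows from $\Delta_{L_k}(t,-1)\neq 0$. That is Corollary~\ref{cor:nonvanishing}, which comes from the covering factorization (Proposition~\ref{prop:covering}) one level up. This is why the theorem is stated only for $k\le a-1$. The paper then works in $\Z[t^{\pm1}][v]$ and handles the $t$-symmetry coefficientwise in $v$.

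\textbf{Your route.} You get the evenness from the Torres specialization at $t=1$ in the axis variable, $\Delta(1,y)\eqdot (y^{\ell}-1)/(y-1)$, using $\Delta_{C_2}=1$. Its center $(\ell-1)/2$ is an integer because $\ell$ is odd (Lemma~\ref{lem:elleven}). You dispose of the cross-exponents by the double-application trick rather than by specialization. Your final step through the Klein four-group fixed ring $\Z[u,v]$ is equivalent to the paper's two-stage argument.

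\textbf{What each buys.} Your argument uses only the two Torres conditions, Torres duality and Proposition~\ref{prop:symmetry}. It never touches the tower or the Shapiro-type covering theorem. It therefore also gives the normal form at the top level $k=a$, so your remark that $k\le a-1$ is not needed is right. This matches the appendix, where $\Delta_{L_2}(t,x)\eqdot x^2-((T-2)^2-2)x+1$ is visibly of this form. The paper's route costs nothing extra in context, since Corollary~\ref{cor:nonvanishing} is needed anyway for the telescoping. Its price is that the normal form depends on there being a level above $k$.
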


\begin{proof}
Choose a representative $\Delta=\Delta_{L_k}\in\Z[t^{\pm},y^{\pm}]$. By Corollary~\ref{cor:palindromic} there are $\eta=\pm1$ and $\alpha,m\in\Z$ with $\Delta(t,y^{-1})=\eta\, t^{\alpha}y^{-m}\Delta(t,y)$. Setting $y=1$: $\Delta(t,1)=\eta t^{\alpha}\Delta(t,1)$ with $\Delta(t,1)\neq0$, so $\eta=1$, $\alpha=0$. Setting $y=-1$: $\Delta(t,-1)=(-1)^{m}\Delta(t,-1)$ with $\Delta(t,-1)\neq0$ (Corollary~\ref{cor:nonvanishing}), so $m=2e$ is even. Then $q:=y^{-e}\Delta$ satisfies $q(t,y^{-1})=q(t,y)$, hence $q\in\Z[t^{\pm}][v]$, $v=y+y^{-1}$.

Write $q=\sum_j c_j(t)v^j$. By Proposition~\ref{prop:symmetry}, $q(t^{-1},y)=\eta' t^{\alpha'} y^{\beta'} q(t,y)$ for some unit; comparing $v$-expansions forces $\beta'$ to vanish and gives $c_j(t^{-1})=\eta' t^{\alpha'}c_j(t)$ for all $j$, with a single fixed unit $\eta't^{\alpha'}$. At $y=1$: $q(t,1)=\pm\Delta(t,1)\eqdot\qint\ell\knotpoly{k}$, which is nonzero and, evaluated at $t=1$, equals $\pm\ell\ne0$; from $q(t^{-1},1)=\eta' t^{\alpha'}q(t,1)$ at $t=1$ we get $\eta'=1$. Thus $q(t^{-1},1)=t^{\alpha'}q(t,1)$. Assuming $\alpha'$ is even, 
replacing $q$ by $t^{-\alpha'/2}q$ gives us that each $c_j(t^{-1})=c_j(t)$, i.e.\ $c_j\in\Z[u].$ 
($\alpha'$ is even because $q(t,1)= \pm t^\beta\qint{\ell}p_k$ with $\qint{\ell}p_k$ symmetric. Substituting into $q(t^{-1},1)=t^{\alpha'}q(t,1)$ yields $\alpha'=-2\beta$.) This yields $Q_k\in\Z[u,v]$ with $\Delta\eqdot Q_k(u,v)$.

Finally, $Q_k(w+2,2)=q(t,1)=\pm\qint{\ell}\knotpoly{k}$ as elements of $\Z[w]$: two elements of $\Z[w]$ that agree up to $\pm t^\alpha$ agree up to sign, since applying $t\mapsto t^{-1}$ to $p=\pm t^{\alpha}p'$ with $p,p'\in\Z[w]$ gives $t^{2\alpha}=1$. Replace $Q_k$ by $-Q_k$ if needed.
\end{proof}

\begin{definition}\label{def:R}
With $Q_k$ normalized as in Theorem~\ref{thm:normalform}, set
\[
R_k(w):=Q_k(w+2,\,-2)\in\Z[w],\qquad \widehat R_k:=\eps_\ell\,R_k .
\]
Note $\Delta_{L_k}(t,-1)\eqdot\pm R_k$.
\end{definition}

\begin{lemma}\label{lem:Rcongruence}
$R_k\;\equiv\;\qint{\ell}\,\knotpoly{k} \pmod 4$.
\end{lemma}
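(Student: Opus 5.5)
The plan is to compare $Q_k(w+2,2)$ with $Q_k(w+2,-2)$ directly in $\Z[u,v]$. By Theorem~\ref{thm:normalform}, $Q_k(w+2,2)=\qint{\ell}\knotpoly{k}$ exactly, and by Definition~\ref{def:R}, $R_k=Q_k(w+2,-2)$. So it suffices to show that for any $Q\in\Z[u,v]$ we have $Q(u,2)\equiv Q(u,-2)\pmod 4$ in $\Z[u]=\Z[w]$.

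Write $Q(u,v)=\sum_j c_j(u)v^j$ with $c_j\in\Z[u]$. Then
\[
Q(u,2)-Q(u,-2)=\sum_j c_j(u)\big(2^j-(-2)^j\big)=\sum_{j\ \mathrm{odd}} 2^{j+1}c_j(u).
\]
Every term here is divisible by $2^{2}=4$, since $j\ge1$. Hence $R_k\equiv Q_k(w+2,2)=\qint{\ell}\knotpoly{k}\pmod 4$.

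This matches the remark in the introduction that $(y^p+y^{-p})|_{y=1}\equiv(y^p+y^{-p})|_{y=-1}\pmod 4$. Writing things in the variable $v=y+y^{-1}$ makes that remark a one-line computation.

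The only substantive input is Theorem~\ref{thm:normalform}, namely that $\Delta_{L_k}$ really is a polynomial with \emph{integer} coefficients in $v=y+y^{-1}$ (not merely in $y^{\pm1}$ with some half-integral symmetry). That theorem in turn relies on the evenness of $m$ shown via Corollary~\ref{cor:nonvanishing}. With that in hand there is no real obstacle. The one point to check is that the exact normalization $Q_k(w+2,2)=\qint{\ell}\knotpoly{k}$ (sign fixed) is used, so that no unit ambiguity $\pm t^\alpha$ enters. Since $R_k$ is defined from the same normalized $Q_k$, this is automatic.
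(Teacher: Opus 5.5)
Your proposal is correct and is essentially the paper's own proof: expand $Q_k=\sum_j q_j(u)v^j$ and note $Q_k(u,2)-Q_k(u,-2)=\sum_{j\ \mathrm{odd}}2^{j+1}q_j(u)\equiv 0\pmod 4$. Then invoke the exact normalization $Q_k(w+2,2)=\qint{\ell}\knotpoly{k}$ from Theorem~\ref{thm:normalform}.
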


\begin{proof}
Write $Q_k(u,v)=\sum_j q_j(u)v^j$. Then
\[
Q_k(u,2)-Q_k(u,-2)=\sum_{j\ \mathrm{odd}}q_j(u)\big(2^j-(-2)^j\big)=\sum_{j\ \mathrm{odd}}2^{\,j+1}q_j(u)\equiv0\pmod4,
\]
since $j\ge1$. Now use the normalization $Q_k(w+2,2)=\qint\ell\knotpoly{k}$.
\end{proof}

\section{Telescoping}\label{sec:telescope}

\begin{theorem}[Integral factorization]\label{thm:telescope}
For $1\le k\le a-1$,
\[
\knotpoly{k+1}\;=\;\eps_\ell\,\knotpoly{k}\,R_k\;=\;\knotpoly{k}\,\widehat R_k \qquad\text{in }\Z[w],
\]
with $R_k(0)=\eps_\ell$, $\widehat R_k(0)=1$. Consequently
\[
\Conway_K=\knotpoly{a}=\varphi(w)^2\,\prod_{k=1}^{a-1}\widehat R_k(w) .
\]
\end{theorem}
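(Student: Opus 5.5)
The plan is to specialize the covering factorization of Proposition~\ref{prop:covering} at $y=1$ and then pin down the unit. At level $k+1$ (so $2\le k+1\le a$) the proposition gives
\[
\Delta_{L_{k+1}}(t,1)\;\eqdot\;\Delta_{L_k}(t,1)\,\Delta_{L_k}(t,-1).
\]
By the Torres specialization \eqref{eq:torres}, the left side is $\eqdot\qint\ell\,\knotpoly{k+1}$. By Theorem~\ref{thm:normalform} and Definition~\ref{def:R}, the right side is $\eqdot \qint\ell\,\knotpoly{k}\cdot R_k$. Every quantity here lies in $\Z[w]$, since $\qint\ell,\knotpoly{k},R_k\in\Z[w]$. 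Hence
\[
\qint\ell\,\knotpoly{k+1}=\pm t^{\alpha}\,\qint\ell\,\knotpoly{k}\,R_k .
\]
Apply the same argument as at the end of the proof of Theorem~\ref{thm:normalform}: applying $t\mapsto t^{-1}$ to an identity $p=\pm t^\alpha p'$ between elements of $\Z[w]$ forces $\alpha=0$. Then cancel $\qint\ell\neq0$ in the domain $\Z[w]$. This yields $\knotpoly{k+1}=\delta\,\knotpoly{k}R_k$ for some sign $\delta=\pm1$.

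The main step is identifying $\delta=\eps_\ell$, and I would do it by evaluating at $w=0$. Since $\knotpoly{j}(0)=1$ for every $j$, we get $\delta R_k(0)=1$, so it suffices to show $R_k(0)=\eps_\ell$. Lemma~\ref{lem:Rcongruence} gives $R_k(0)\equiv\qint\ell(0)\knotpoly{k}(0)=\ell\pmod 4$. Also $R_k(0)=\delta^{-1}=\pm1$. Because $\ell$ is odd (Lemma~\ref{lem:elleven}), $\ell\equiv\eps_\ell\pmod 4$, and the two signs $\pm1$ are distinct mod $4$. Therefore $R_k(0)=\eps_\ell$, and then $\delta=\eps_\ell^{-1}=\eps_\ell$. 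This gives $\knotpoly{k+1}=\eps_\ell\knotpoly{k}R_k=\knotpoly{k}\widehat R_k$ and $\widehat R_k(0)=\eps_\ell^2=1$. The point to watch is that $R_k(0)=\pm1$ is obtained from the factorization first; only then does the mod $4$ congruence select the sign.

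Finally, telescope from $k=1$ to $a-1$ to get $\knotpoly{a}=\knotpoly{1}\prod_{k=1}^{a-1}\widehat R_k$, and substitute $\knotpoly{1}=\varphi^2$ from Lemma~\ref{lem:HKsquare}. If $a=1$ the product is empty and the statement is just Lemma~\ref{lem:HKsquare}.
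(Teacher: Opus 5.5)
Your proposal is correct and follows the paper's proof essentially step for step: specialize Proposition~\ref{prop:covering} at $y=1$, use Torres and the normal form to get $\qint\ell\,\knotpoly{k+1}=\pm t^{\alpha}\qint\ell\,\knotpoly{k}R_k$, kill $t^\alpha$ by the $t\mapsto t^{-1}$ argument, and cancel $\qint\ell$. Then evaluate at $w=0$ to get $R_k(0)=\pm1$, and let Lemma~\ref{lem:Rcongruence} select $R_k(0)=\eps_\ell$ before telescoping down to $\knotpoly{1}=\varphi^2$.
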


\begin{proof}
Specializing Proposition~\ref{prop:covering} at $y=1$ and applying \eqref{eq:torres} on the left,
\[
\qint{\ell}\,\knotpoly{k+1}\;\eqdot\;\Delta_{L_k}(t,1)\cdot\Delta_{L_k}(t,-1)\;\eqdot\;\big(\qint{\ell}\knotpoly{k}\big)\cdot R_k .
\]
Both extremes lie in $\Z[w]$; two elements of $\Z[w]$ agreeing up to $\pm t^{\alpha}y^{\beta}$ agree up to sign (specialize $y$, then argue as at the end of the proof of Theorem~\ref{thm:normalform}). Cancelling $\qint\ell\neq0$ in the domain $\Z[w]$:
\[
\knotpoly{k+1}=s_k\,\knotpoly{k}\,R_k,\qquad s_k\in\{\pm1\}.
\]
Evaluating at $w=0$: $1=s_k\,R_k(0)$, so $R_k(0)=s_k\in\{\pm1\}$. By Lemma~\ref{lem:Rcongruence}, $R_k(0)\equiv\ell\pmod4$; since $\pm1$ are distinct mod $4$ and $\ell$ is odd, $R_k(0)=\eps_\ell$ and $s_k=\eps_\ell$. Telescoping from $\knotpoly{1}=\varphi^2$ (Lemma~\ref{lem:HKsquare}) gives the product formula.
\end{proof}

\begin{theorem}[Mod $4$]\label{thm:mod4}
$\knotpoly{k+1}\equiv\eps_\ell\qint{\ell}\,\knotpoly{k}^{\,2}\pmod4$ for $1\le k\le a-1$, hence
\[
\Conway_K\;\equiv\;\big(\eps_\ell\qint{\ell}\big)^{2^{a-1}-1}\,\varphi^{\,2^{a}}\pmod4,
\]
and $\Conway_K\bmod4\in S$; that is, $\Conway_K\equiv f(z)f(-z)\pmod4$ for some $f\in\Z[z]$.
\end{theorem}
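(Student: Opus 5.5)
The plan is to combine the exact factorization of Theorem~\ref{thm:telescope} with the congruence of Lemma~\ref{lem:Rcongruence}, then iterate down the tower and finish with the splitting results of \S\ref{sec:S}.

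First, for $1\le k\le a-1$, Theorem~\ref{thm:telescope} gives $\knotpoly{k+1}=\eps_\ell\,\knotpoly{k}\,R_k$ exactly in $\Z[w]$. By Lemma~\ref{lem:Rcongruence}, $R_k\equiv\qint{\ell}\knotpoly{k}\pmod 4$. Multiplying this congruence by $\eps_\ell\knotpoly{k}$ is legitimate in $\Z_4[w]$, and yields $\knotpoly{k+1}\equiv\eps_\ell\qint{\ell}\,\knotpoly{k}^2\pmod 4$. This is the first claim of the theorem.

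Next I iterate. I will show by induction on $k$ that
\[
\knotpoly{k}\equiv(\eps_\ell\qint{\ell})^{2^{k-1}-1}\varphi^{2^k}\pmod 4 .
\]
The base case $k=1$ is $\knotpoly{1}=\varphi^2$, which is Lemma~\ref{lem:HKsquare}. For the inductive step, squaring the level-$k$ expression and multiplying by $\eps_\ell\qint\ell$ gives exponent $2(2^{k-1}-1)+1=2^k-1$ on $\eps_\ell\qint\ell$ and $2^{k+1}$ on $\varphi$. Setting $k=a$ gives the displayed formula for $\Conway_K=\knotpoly{a}$.

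For the last claim, Lemma~\ref{lem:qint} puts $\eps_\ell\qint\ell\bmod 4$ in $S$. Any square $\varphi^{2^a}=(\varphi^{2^{a-1}})^2$ also lies in $S$ by Lemma~\ref{lem:normform}. Since $S$ is multiplicatively closed (again Lemma~\ref{lem:normform}), the product lies in $S$. By Definition~\ref{def:S}, this means $\Conway_K\equiv f(z)f(-z)\pmod 4$ for some $f\in\Z[z]$.

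There is no serious obstacle here. The only points needing care are that the sign $\eps_\ell$ has already been pinned down in Theorem~\ref{thm:telescope}, and that reducing exact identities mod $4$ respects products.
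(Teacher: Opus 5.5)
Your proposal is correct and follows the paper's proof essentially step for step. It uses the exact identity of Theorem~\ref{thm:telescope} together with Lemma~\ref{lem:Rcongruence}, inducts down the tower from the Hartley--Kawauchi base $\varphi^2$, and concludes from Lemma~\ref{lem:qint} and the multiplicativity of $S$. The only cosmetic difference is that you get $(\eps_\ell\qint\ell)^{2^{a-1}-1}\in S$ directly from multiplicative closure, whereas the paper writes this odd power as $\eps_\ell\qint\ell$ times a square, which amounts to the same thing.
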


\begin{proof}
By Theorem~\ref{thm:telescope} and Lemma~\ref{lem:Rcongruence}, $\knotpoly{k+1}=\eps_\ell\knotpoly{k}R_k\equiv\eps_\ell\qint\ell\knotpoly{k}^2\pmod4$. Induction gives $\knotpoly{k}\equiv(\eps_\ell\qint\ell)^{2^{k-1}-1}(\varphi^2)^{2^{k-1}}$; take $k=a$. For membership in $S$: if $a=1$ this is $\varphi^2\in S$ (Lemma~\ref{lem:normform}). If $a\ge2$, the exponent $2^{a-1}-1$ is odd, so
\[
\big(\eps_\ell\qint\ell\big)^{2^{a-1}-1}=\big(\eps_\ell\qint\ell\big)\cdot\Big(\big(\eps_\ell\qint\ell\big)^{(2^{a-1}-2)/2}\Big)^{2},
\]
a product of $\eps_\ell\qint\ell\in S$ (Lemma~\ref{lem:qint}) and squares; conclude by multiplicativity of $S$ (Lemma~\ref{lem:normform}).
\end{proof}

\begin{corollary}\label{cor:order4}
 Suppose $a\geq 2$. With $M_{a-1}:=\tfrac12\big(Q_{a-1}(u,2)+Q_{a-1}(u,-2)\big)$ and $E_{a-1}:=\tfrac14\big(Q_{a-1}(u,2)-Q_{a-1}(u,-2)\big)$, both in $\Z[u]=\Z[w]$ and abbreviated $M,E$ below,
\[
\eps_\ell\,\qint{\ell}\,\Conway_K\;=\;M_{a-1}^2-4E_{a-1}^2 ,
\]
with $M\equiv\qint\ell\,\knotpoly{a-1}\pmod2$; in particular $M(0)$ is odd, and comparing constant terms modulo $8$,
\[
\eps_\ell\,\ell\;\equiv\;1-4E(0)^2\pmod 8,\qquad\text{i.e.}\qquad E(0)\equiv\tfrac{\ell^2-1}{8}\pmod2 .
\]
\end{corollary}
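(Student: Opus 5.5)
The identity should follow as a difference of squares. Put $P:=Q_{a-1}(u,2)$ and $N:=Q_{a-1}(u,-2)$, both in $\Z[u]=\Z[w]$. By the normalization in Theorem~\ref{thm:normalform}, $P=\qint\ell\,\knotpoly{a-1}$, and by Definition~\ref{def:R}, $N=R_{a-1}$. Theorem~\ref{thm:telescope} with $k=a-1$ gives $\knotpoly{a}=\eps_\ell\,\knotpoly{a-1}R_{a-1}$. Multiplying by $\eps_\ell\qint\ell$ and using $\eps_\ell^2=1$ yields
\[
\eps_\ell\,\qint\ell\,\Conway_K=\qint\ell\,\knotpoly{a-1}\,R_{a-1}=P\,N .
\]
Writing $P=M+2E$ and $N=M-2E$ with $M=\tfrac12(P+N)$ and $E=\tfrac14(P-N)$ then gives $PN=M^2-4E^2$.

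It remains to check integrality of $M$ and $E$. Lemma~\ref{lem:Rcongruence} gives $P-N\equiv0\pmod 4$ coefficientwise; its proof shows $P-N=\sum_{j\text{ odd}}2^{j+1}q_j(u)$. Hence $E\in\Z[u]$, and then $P+N=2P-(P-N)$ is even, so $M\in\Z[u]$. Moreover $M=P-2E\equiv P=\qint\ell\,\knotpoly{a-1}\pmod 2$. At $w=0$ this gives $M(0)\equiv\ell\cdot1\equiv1\pmod2$, since $\ell$ is odd (Lemma~\ref{lem:elleven}).

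For the constant terms, evaluate the main identity at $w=0$, using $\qint\ell(0)=\ell$ and $\Conway_K(0)=1$. This gives $\eps_\ell\ell=M(0)^2-4E(0)^2$. Since $M(0)$ is odd, $M(0)^2\equiv1\pmod8$, so $\eps_\ell\ell\equiv1-4E(0)^2\pmod8$. Because $4E(0)^2\equiv4E(0)\pmod8$, this says $E(0)\equiv(1-\eps_\ell\ell)/4\pmod2$.

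To match the stated form we need $(1-\eps_\ell\ell)/4\equiv(\ell^2-1)/8\pmod2$, which we check over the four odd residues mod $8$:
\begin{itemize}
\item $\ell\equiv1$: $\eps_\ell=1$ and $1-\ell\equiv0\pmod8$, so the left side is even; $\ell^2-1\equiv0\pmod{16}$, so the right side is even.
\item $\ell\equiv3$: $\eps_\ell=-1$ and $1+\ell\equiv4\pmod8$, so the left side is odd; $(\ell^2-1)/8$ is odd.
\item $\ell\equiv5$: $\eps_\ell=1$ and $1-\ell\equiv4\pmod8$, so the left side is odd; $(\ell^2-1)/8$ is odd.
\item $\ell\equiv7$: $\eps_\ell=-1$ and $1+\ell\equiv0\pmod8$, so the left side is even; $(\ell^2-1)/8$ is even.
\end{itemize}

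The one real obstacle is the choice of sign normalization. The signs in Definition~\ref{def:R} and Theorem~\ref{thm:telescope} must be used exactly as fixed there, so that $PN$ equals $\eps_\ell\qint\ell\Conway_K$ on the nose rather than up to sign. This is guaranteed by the identity $\knotpoly{k+1}=\eps_\ell\knotpoly{k}R_k$ in Theorem~\ref{thm:telescope}. Everything else is bookkeeping.
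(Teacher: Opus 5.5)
Your proof is correct and follows the paper's argument essentially step for step. You use the identity $\eps_\ell\qint\ell\Conway_K=Q_{a-1}(u,2)\,Q_{a-1}(u,-2)$ from the telescoping theorem, integrality of $M,E$ via the $v$-expansion of $Q_{a-1}$, and the mod-$8$ comparison of constant terms, with your residue-class check of $(1-\eps_\ell\ell)/4\equiv(\ell^2-1)/8\pmod 2$ simply spelling out the paper's final sentence in more detail.
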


\begin{proof}
Take $k=a$: from the proof of Theorem~\ref{thm:telescope}, \[\eps_\ell\qint\ell\knotpoly{k}=\big(\qint\ell\knotpoly{k-1}\big)R_{k-1}=Q_{k-1}(u,2)\,Q_{k-1}(u,-2)=M^2-4E^2.\] Writing $Q_{k-1}=\sum q_jv^j$, $M=\sum_{j\,\mathrm{even}}2^jq_j$ and $E=\sum_{j\,\mathrm{odd}}2^{\,j-1}q_j$ lie in $\Z[u]$, and $M\equiv Q_{k-1}(u,2)=\qint\ell\knotpoly{k-1}\pmod2$, whose constant term is $\ell$, odd. Evaluating the displayed identity at $w=0$ gives $\eps_\ell\ell=M(0)^2-4E(0)^2$ with $M(0)$ odd, so $M(0)^2\equiv1\pmod8$ and $\eps_\ell\ell\equiv1-4E(0)^2\pmod8$; the two cases $E(0)$ even/odd give $\eps_\ell\ell\equiv1$ resp.\ $5\pmod 8$, which matches $\ell\equiv\pm1$ resp.\ $\pm3\pmod8$, i.e.\ $E(0)\equiv(\ell^2-1)/8\pmod2$ (the second supplement to quadratic reciprocity in disguise).
\end{proof}
\begin{remark}
When $a=1$ we are in the Hartley-Kawauchi square case, where $\nabla_K$ is a square, with no quantum integer correction term.
\end{remark}

\begin{lemma}\label{lem:conway_sub}
Let $a,m\geq1$ be integers and let $f(w)\in \mathbb Z[w]$. Then 
\begin{enumerate}
\item $f(w)|_{t\mapsto t^m} = f([m]^2w)$; note $[m]^2w\in\Z[w]$ for every $m$, odd or even. 
\item If $a$ and $m$ are odd, $[a]_{t\mapsto t^m} =[am]/[m]$.
\end{enumerate}
\end{lemma}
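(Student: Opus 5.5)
The plan is to work throughout in $\Z[s^{\pm1}]$, $s=t^{1/2}$, where $w=z^2=(s-s^{-1})^2=t+t^{-1}-2$ and $\qint{m}=(s^m-s^{-m})/(s-s^{-1})$. The substitution $t\mapsto t^m$ is the ring endomorphism $s\mapsto s^m$ of $\Z[s^{\pm1}]$. It maps $\Z[u]=\Z[w]$ into itself, since $t^m+t^{-m}$ is a polynomial in $u=t+t^{-1}$. Both parts then reduce to checking the identity on the generator: $f$ is an arbitrary polynomial in $w$ for (1), and $\qint{a}$ is an explicit quotient for (2).

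For (1), it suffices by multiplicativity to treat $f(w)=w$. Its image is
\[
(s^m-s^{-m})^2=\Big(\tfrac{s^m-s^{-m}}{s-s^{-1}}\Big)^2(s-s^{-1})^2=\qint{m}^2\,w .
\]
Since $w\mapsto\qint{m}^2w$ is a ring map $\Z[w]\to\Z[s^{\pm1}]$ agreeing with $s\mapsto s^m$ on the generator $w$, the two agree on all of $\Z[w]$, which gives $f(w)|_{t\mapsto t^m}=f(\qint{m}^2w)$. The only point needing care is integrality for even $m$, where $\qint{m}$ itself is not in $\Z[w]$ (it is $\sigma$-anti-invariant, equal to $\xi$ times an element of $\Z[w]$, e.g.\ $\qint{2}=s+s^{-1}=\xi$). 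I would note that $\qint{m}^2$ is invariant under both $s\mapsto s^{-1}$ and $\sigma\colon s\mapsto -s^{-1}$. Alternatively, $\qint{m}^2w=(s^m-s^{-m})^2=t^m+t^{-m}-2$ is visibly symmetric in $t$, hence lies in $\Z[u]=\Z[w]$. I would use this second argument, as it settles the parenthetical claim at once.

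For (2), take $a,m$ odd. Directly,
\[
\qint{a}|_{s\mapsto s^m}=\frac{s^{am}-s^{-am}}{s^m-s^{-m}}=\frac{(s^{am}-s^{-am})/(s-s^{-1})}{(s^{m}-s^{-m})/(s-s^{-1})}=\frac{\qint{am}}{\qint{m}} ,
\]
computed in the fraction field and valid since $\qint{m}\neq0$.

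The step to double-check is that the substitution in (2) is legitimate inside $\Z[w]$ and matches the convention of (1). Since $a$ is odd, $\qint{a}$ lies in $\Z[w]$; and since $m$ is odd, $s\mapsto s^m$ commutes with $\sigma$ (as $(-s^{-1})^m=-s^{-m}$). Hence the image lies in $\Z[w]$, the quotient $\qint{am}/\qint{m}$ is an honest polynomial, and it equals $\qint{a}(\qint{m}^2w)$ by (1). No further obstacle is expected; this is a routine verification.
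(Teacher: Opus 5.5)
Your proposal is correct and follows the paper's proof: the paper likewise observes that $t\mapsto t^m$ sends $z$ to $\qint{m}z$, hence $w$ to $\qint{m}^2w$, and obtains (2) by inspecting the quotient formula. Your remark that $\qint{m}^2w=t^m+t^{-m}-2$ is symmetric in $t$ supplies the integrality check for even $m$, which the paper only asserts. Your appeal to $\sigma$-equivariance in (2) is harmless but unnecessary, since (1) already places the image of $\qint{a}$ in $\Z[w]$.
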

\begin{proof}
Note that $z=t^{1/2}-t^{-1/2}\mapsto t^{m/2}-t^{-m/2}$ under the substitution $t\mapsto t^m$. Applying this to $z$ we get $t^{m/2}-t^{-m/2}=(t^{1/2}-t^{-1/2})\frac{t^{m/2}-t^{-m/2}}{t^{1/2}-t^{-1/2}}=[m]z$. Hence $w\mapsto [m]^2w.$

Similarly $[a]_{t\mapsto t^m}=[am]/[m]$ by inspection of the formulas.
\end{proof}

\begin{theorem}\label{thm:conjecture}
Conjecture~\ref{conj:main} holds for all amphicheiral knots: $\nabla_K\equiv f(z)f(-z)$ in
$\Z_4[z]$. In fact we have the stronger integer statements:
\begin{enumerate}
\item If $K$ is negative amphicheiral, its Conway polynomial splits over the
integers: $\nabla_K=f(z)f(-z)$, $f\in\Z[z]$ \textup{(Hartley)}.
\item If $K$ is strongly positive amphicheiral, $\nabla_K=f(w)^2$ in $\Z[w]$
\textup{(Hartley--Kawauchi)}.
\item If $K$ is periodically positive amphicheiral of order $2^a$, $a\geq2$, then
$\eps_\ell[\ell]\,\nabla_K=M^2-4E^2$ in $\Z[w]$, where $\ell$ is the linking number
of $K$ with the axis.
\item For any positive amphicheiral knot there is an exact identity in $\Z[w]$,
\[
\Big(\prod_{i}\eps_{\ell_i}\,[m_i\ell_i]\Big)\,\nabla_K
\;=\;\Big(\prod_{i}[m_i]\Big)\big(M^2-4E^2\big),
\]
where the products run over a finite \textup{(}possibly empty\textup{)} index set,
$\ell_i$ and $m_i$ are odd positive integers \textup{(}axis linking numbers and
winding numbers of the invariant companions\textup{)}, and $M(0)$ is odd.
\end{enumerate}
\end{theorem}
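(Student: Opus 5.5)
Items (1)--(3) are already available: (1) is Hartley's theorem \cite{H1}, (2) is Hartley--Kawauchi \cite{HK} in the form of Lemma~\ref{lem:HKsquare}, and (3) is Corollary~\ref{cor:order4}. So the work is in (4), and then in deducing the mod $4$ statement. For (4) I would follow Hartley's JSJ reduction, as recalled in \cite{CM}. A positive amphicheiral knot $K$ has a satellite decomposition along its JSJ tori, and the orientation-reversing symmetry can be isotoped to preserve the JSJ family. The root piece of the JSJ tree is either hyperbolic or Seifert fibered. Seifert fibered pieces other than composing spaces admit no orientation-reversing symmetry compatible with a nontrivial knot, apart from the torus-knot-free cases, which are handled as in Hartley.

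In the hyperbolic case, Mostow rigidity makes the symmetry isotopic to a finite-order map. Lemma~\ref{lem:taxonomy} then places this map either in the strongly positive amphicheiral case or in the $2^a$ rotary model. Companions are handled by induction on the JSJ tree. Cut along the torus $T$ of the root piece, giving $K=P(C)$ with pattern $P$ of winding number $m$, which is odd as we check below. The Seifert formula gives $\nabla_K(w)=\nabla_P(w)\,\nabla_C(w)|_{t\mapsto t^m}$. By Lemma~\ref{lem:conway_sub}(1), the substitution $t\mapsto t^m$ sends $\nabla_C(w)$ to $\nabla_C([m]^2w)$. This substitution preserves identities of the form $\eps_\ell[\ell]\nabla=M^2-4E^2$, except that $[\ell]$ becomes $[m\ell]/[m]$ by Lemma~\ref{lem:conway_sub}(2). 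Clearing the denominator produces exactly the factors $\eps_{\ell_i}[m_i\ell_i]$ and $[m_i]$ in the statement.

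The companions either are permuted by the symmetry or are each invariant. Companions in a free orbit of even length $2j$ come in pairs $C,\bar C$, and $\nabla_{\bar C}=\nabla_C$, so they contribute squares. Invariant companions are themselves positive amphicheiral and fall under induction. The forms $M^2-4E^2$ are closed under multiplication, since they are norms from $\Z[w][\sqrt{4}]$: explicitly
\[
(M^2-4E^2)(M'^2-4E'^2)=(MM'+4EE')^2-4(ME'+EM')^2.
\]
The condition that $M(0)$ is odd is preserved under this product. Squares are of this form with $E=0$. So the root piece contributes a factor of type (2) or (3), and each companion contributes either a square or, by induction, a factor of type (4). Multiplying everything gives (4).

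I expect the main obstacle to be the bookkeeping at each satellite step. One must verify that the winding number $m_i$ and the axis linking number $\ell_i$ of the induced symmetry on the companion are both odd. For $\ell_i$ this follows from Lemma~\ref{lem:elleven} applied to the restricted model. For $m_i$, I would argue as in Lemma~\ref{lem:elleven} that the symmetry acts freely on the pattern strands, so $m_i$ is odd. One must also treat the case where a companion is an invariant but only strongly amphicheiral piece; there the square case applies with empty correction.

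For the mod $4$ corollary, reduce (4) modulo $4$. First, $M^2-4E^2\equiv M^2$, which lies in $S$. Each $\eps_{\ell}[m\ell]$ and $[m]$ with $m$ odd has odd constant term, and both $\eps_{m\ell}[m\ell]$ and $\eps_m[m]$ lie in $S$ by Lemma~\ref{lem:qint}; the signs $\eps$ differ only by a unit $\pm1$. So the right side lies in $S$. Applying Lemma~\ref{lem:reduce} with $s=\prod_i\eps[m_i\ell_i]$ cancels the left-hand factor, giving $\nabla_K\in S$, up to a sign $\pm1$ to be checked. That sign is fixed by evaluating at $w=0$ and using that $-1\notin S$ fails only by the constant-term check of Corollary~\ref{cor:order4}; this sign check is the delicate point. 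Negative amphicheiral knots are covered by (1).
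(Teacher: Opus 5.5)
Your overall strategy matches the paper's. It uses Hartley's equivariant JSJ reduction and Mostow rigidity on the hyperbolic pieces. It then applies Lemma~\ref{lem:taxonomy} with Lemma~\ref{lem:HKsquare} or Corollary~\ref{cor:order4} to the invariant atoms, the substitution $t\mapsto t^m$ of Lemma~\ref{lem:conway_sub}, and Brahmagupta composition. Lemmas~\ref{lem:qint} and~\ref{lem:reduce} finish the mod-$4$ step.

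Running this as an induction on the JSJ tree, rather than the paper's flat factorization $(\star)$, is fine. Under $t\mapsto t^m$ an identity of type (4) acquires the same number of factors $[m]$ in the denominator on both sides, so they cancel.

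Your sign check also works. Lemma~\ref{lem:reduce} yields $\pm\nabla_K\in S$. Every element of $S$ has constant term congruent to a square, hence to $0$ or $1$, modulo $4$, while $\nabla_K(0)=1$, so the sign is $+$. The paper instead multiplies (4) by $\prod_i\eps_{m_i}$ and uses $\eps_m\eps_\ell=\eps_{m\ell}$, so the signs match on the nose.

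The problems are in the winding numbers. First, your claim that every companion has odd winding number is not justified, and winding number $0$ is not excluded by anything in your argument. For such a companion your induction step fails as well. The relation $h_*[K]=[K]=m[J]$ forces $h_*[J]=[J]$ only when $m\neq0$, so $J$ need not be positive amphicheiral. The paper disposes of these companions directly: they contribute $\Delta_J(t^0)=\Delta_J(1)=\pm1$, i.e.\ a factor $\nabla_J(0)=1$. Your dichotomy ``even free orbit or invariant'' also omits orbits of odd length $k\ge3$. There $h^k$ stabilizes each member, so one copy goes to the induction and the remaining $k-1$ copies form a square.

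Second, for $m\neq0$ your justification ``the symmetry acts freely on the pattern strands, so $m$ is odd'' does not work. The $\pi$-rotation of a solid torus about its core acts freely on an invariant closed $2$-braid knot, which has winding number $2$. What the proof of Lemma~\ref{lem:elleven} actually uses is that the involution shifts the angular coordinate along the core by exactly $\pi$. To transplant this to a companion torus you need three things:
\begin{enumerate}
\item a finite-order model of the symmetry there, for instance the isometry of the hyperbolic root piece extended over the filling solid tori to a finite-order symmetry $g$ of $(S^3,P)$;
\item a standard form for its action on the invariant solid torus;
\item an argument that $g^{2^{a-1}}$ translates the core rather than fixing it pointwise.
\end{enumerate}
For the third point: when $a\ge2$, a fixed core would lie in $\Fix(g^{2^{a-1}})$ and hence equal $C_2$, whose orientation $g$ reverses, whereas $m\neq0$ forces $g$ to preserve the core's orientation. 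When $a=1$ the fixed set is $S^0$, which cannot contain the core.

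The paper does not reprove this; it cites \cite{H1} for the oddness of the winding numbers of the invariant atoms. Some such input is indispensable. Lemma~\ref{lem:conway_sub}(2) and Lemma~\ref{lem:qint} are available only for odd indices, and without them the reduction of (4) modulo $4$ to $\nabla_K\in S$ breaks down.
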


\begin{proof}
Statements (1) and (2) are due to Hartley \cite{H1} and Hartley--Kawauchi \cite{HK}, and (3)
is Corollary~\ref{cor:order4}. We prove (4) in four steps, then deduce the mod 4 statement.

\emph{Step 1: the equivariant companionship factorization.}
Let $h$ be an orientation-reversing self-homeomorphism of $(S^3,K)$ preserving the
orientation of $K$. Following Hartley's analysis \cite[\S3]{H1}, $h$ induces a symmetry
of the companionship structure of $K$: in the satellite formula for the Alexander
polynomial, which expresses $\Delta_K$ up to units as a product of polynomials
$\Delta_J(t^{\,n})$ over the companions $J$ of $K$ with winding numbers $n$, the
symmetry permutes the companions, preserving winding numbers. Companions
interchanged with a mirror partner contribute equal factors, $\Delta$ being
insensitive to mirroring, so their total contribution is a perfect square;
companions of winding number $0$ contribute the unit $\Delta_J(1)=\pm1$ and are
dropped. A companion $J$ fixed by the symmetry, with invariant companion solid
torus $V$ and winding $m\neq0$, is itself positive amphicheiral: $h_*[K]=[K]$ and
$[K]=m[J]$ in $H_1(V)\cong\Z$ force $h_*[J]=[J]$, so the induced symmetry of
$(S^3,J)$ reverses the ambient orientation while preserving that of $J$. These fixed atoms may be taken to be atoroidal by the reduction argument of
\cite[Thm.~2.6]{CM}, hence hyperbolic (torus knots are chiral), so that by Mostow rigidity their amphicheiral
symmetry may be chosen of finite order; moreover their winding numbers are odd \cite{H1}. The upshot is a factorization, up to units $\pm t^{\Z}$,
\[
\Delta_K(t)\;\eqdot\;\prod_{j\in J}\Delta_{J_j}\!\big(t^{\,n_j}\big)^{2}\cdot
\prod_{i\in I}\Delta_{K_i}\!\big(t^{\,m_i}\big),
\]
where each $K_i$ is a nontrivial positive amphicheiral knot admitting an
orientation-reversing diffeomorphism of finite order preserving its orientation,
and each $m_i$ is odd.

\emph{Step 2: normalization.}
We convert this to an exact identity of Conway polynomials. For any knot $J$ and
any $n\geq1$, the substitution $t\mapsto t^{\,n}$ carries $w=t-2+t^{-1}$ to
$t^{\,n}-2+t^{-n}= \lambda_n(w)\in\Z[w]$, where $\lambda_n(w)=[n]^2w$ (Lemma~\ref{lem:conway_sub}), so
$\Delta_J(t^{\,n})\eqdot\nabla_J(\lambda_n(w))$, an element of $\Z[w]$ taking the
value $\nabla_J(0)=1$ at $w=0$; note this holds for every $n$, odd or even (only
the quantum-integer transformation law of Lemma~\ref{lem:conway_sub} requires oddness). Hence both
sides of the displayed factorization are, up to units, elements of $\Z[w]$; two
elements of $\Z[w]$ agreeing up to $\pm t^{\alpha}$ agree up to sign (one can argue as at
the end of the proof of Theorem~\ref{thm:normalform}), and evaluating at $w=0$, where every factor
equals $1=\nabla_K(0)$, fixes the sign:
\[
\nabla_K \;=\; P(w)^2\cdot\prod_{i\in I}\nabla_{K_i}\!\big(\lambda_{m_i}(w)\big),
\qquad P:=\prod_{j}\nabla_{J_j}\circ\lambda_{n_j}\in\Z[w],\quad P(0)=1.
\tag{$\star$}
\]

\emph{Step 3: the atoms.}
Fix $i\in I$ and apply Lemma~\ref{lem:taxonomy} to $K_i$ with its finite-order symmetry: after
passing to an odd power, $K_i$ is either strongly positive amphicheiral or
invariant under a standard rotary reflection of order $2^{a_i}$ with $a_i\geq2$.
In the first case $\nabla_{K_i}=\varphi_i(w)^2$ by Lemma~\ref{lem:HKsquare}, and we transfer $i$
out of $I$ into the square factor, replacing $P$ by
$P\cdot(\varphi_i\circ\lambda_{m_i})$. Let $I^{+}\subseteq I$ be the indices that
remain; for $i\in I^{+}$, Corollary~\ref{cor:order4} supplies $M_i,E_i\in\Z[w]$ with $M_i(0)$
odd and
\[
\eps_{\ell_i}\,[\ell_i]\,\nabla_{K_i}\;=\;M_i^2-4E_i^2,
\]
where $\ell_i>0$ is the linking number of $K_i$ with its axis, odd by Lemma~\ref{lem:elleven}.

\emph{Step 4: substitution and assembly.}
Apply $t\mapsto t^{\,m_i}$ to the identity of Step 3. By Lemma~\ref{lem:conway_sub},
$[\ell_i]\mapsto[m_i\ell_i]/[m_i]$ and $w\mapsto\lambda_{m_i}(w)$, so, clearing the
denominator,
\[
\eps_{\ell_i}\,[m_i\ell_i]\;\nabla_{K_i}\!\big(\lambda_{m_i}(w)\big)
\;=\;[m_i]\,\big(\widetilde M_i^{\,2}-4\widetilde E_i^{\,2}\big),
\qquad
\widetilde M_i:=M_i\circ\lambda_{m_i},\ \ \widetilde E_i:=E_i\circ\lambda_{m_i},
\]
an exact identity in $\Z[w]$; here $[m_i]$ and $[m_i\ell_i]$ lie in $\Z[w]$ because
$m_i$ and $m_i\ell_i$ are odd. Multiplying $(\star)$ through by
$\prod_{i\in I^{+}}\eps_{\ell_i}[m_i\ell_i]$ and substituting,
\[
\Big(\prod_{i\in I^{+}}\eps_{\ell_i}[m_i\ell_i]\Big)\nabla_K
\;=\;\Big(\prod_{i\in I^{+}}[m_i]\Big)\cdot
P^2\prod_{i\in I^{+}}\big(\widetilde M_i^{\,2}-4\widetilde E_i^{\,2}\big).
\]
The set $\{M^2-4E^2: M,E\in\Z[w]\}$ is closed under multiplication by
Brahmagupta's identity
$(M^2-4E^2)((M')^2-4(E')^2)=(MM'+4EE')^2-4(ME'+EM')^2$, and absorbs the square:
$P^2(M^2-4E^2)=(PM)^2-4(PE)^2$. This yields (4), with $M(0)$ odd since each
$\widetilde M_i(0)=M_i(0)$ is odd, $P(0)=1$, and the Brahmagupta composition
preserves oddness of the constant term modulo the visible factor of $4$. If
$I^{+}=\emptyset$, the identity holds with empty products, $M=P$ and $E=0$.

\emph{The mod 4 statement.}
For negative amphicheiral knots the splitting is integral by (1), and squares lie
in $S$ by Lemma~\ref{lem:normform}, which covers (2); it remains to treat (4). Multiply both
sides of (4) by $\prod_i\eps_{m_i}$ and reduce modulo 4. Since
$\eps_{m}\eps_{\ell}=\eps_{m\ell}$ for odd $m,\ell$, the left side becomes
$s\,\nabla_K$ with $s:=\prod_i\eps_{m_i\ell_i}[m_i\ell_i]$, which lies in $S$ by
Lemma~\ref{lem:qint} and multiplicativity (Lemma~\ref{lem:normform}); the right side is the product of
$\prod_i\eps_{m_i}[m_i]\in S$ and $M^2-4E^2\equiv M^2\in S$, hence lies in $S$.
Thus $s\,\nabla_K\in S$, and $s$ has the odd integer $\prod_i m_i\ell_i$ as its
constant term, so Lemma~\ref{lem:reduce} gives $\nabla_K\in S$.
\end{proof}

We also prove this beautiful cyclotomic decomposition for $\Conway_K.$

\begin{theorem}\label{thm:cyclores}
Let $K$ be a $2^a$ periodically positive amphicheiral knot in the standard rotary model,
$a\geq2$, with axis linking number $\ell$, and let $Q_1(u,v)$ be the normal form
\textup{(}Theorem~\textup{\ref{thm:normalform}}\textup{)} of the first quotient link
$L_1$. Then
\begin{equation}\label{eq:cyclores}
\eps_\ell\qint\ell\,\Conway_K
\;=\;\prod_{\zeta^{2^{a-1}}=1} Q_1\big(u,\ \zeta+\zeta^{-1}\big)
\;=\;Q_1(u,2)\,Q_1(u,-2)\prod_{k=1}^{a-2}\Big(\mathrm{N}_k\,Q_1(u,\lambda_k)\Big)^{2},
\end{equation}
where $\lambda_k=2\cos(\pi/2^{k})=\zeta_{2^{k+1}}+\zeta_{2^{k+1}}^{-1}$ for $\zeta_b$ a
primitive $b$-th root of unity, and $\mathrm{N}_k$ denotes the coefficient-wise Galois
norm from $\Z[\lambda_k]$ to $\Z$, that is,
\[\mathrm{N}_kQ_1(u,\lambda_k)=\prod_{\substack{0<j<2^{k}\\ j\ \mathrm{odd}}}
Q_1\big(u,\,2\cos(j\pi/2^{k})\big)\in\Z[u].\]
\end{theorem}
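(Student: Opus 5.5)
The plan is to iterate the covering factorization of Proposition~\ref{prop:covering} down the tower and then specialize at $y=1$. First I will show by induction on $j$ that
\[
\Delta_{L_{k}}\big(t,\,y^{2^{j}}\big)\;\eqdot\;\prod_{\omega^{2^{j}}=1}\Delta_{L_{k-j}}(t,\,\omega y)\qquad(0\le j\le k-1).
\]
The inductive step substitutes $y\mapsto y^2$ in the level-$(k-j)$ identity. It then applies Proposition~\ref{prop:covering} to each factor $\Delta_{L_{k-j}}(t,\omega y^2)$. For this, note that Proposition~\ref{prop:covering} is an identity in $\Z[t^{\pm},y^{\pm}]$, so after extending scalars to $\Z[\zeta_{2^{a}}]$ we may substitute $y\mapsto \eta y$ with $\eta^2=\omega$. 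Each factor therefore becomes $\Delta_{L_{k-j-1}}(t,\eta y)\Delta_{L_{k-j-1}}(t,-\eta y)$, and as $\omega$ runs over the $2^j$-th roots of unity, $\pm\eta$ runs over the $2^{j+1}$-th roots. Units $\pm t^\alpha y^\beta$ stay units, possibly picking up roots of unity after substitution.

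Next I will take $k=a$ and $j=a-1$, set $y=1$, and use \eqref{eq:torres} on the left. This gives
\[
\qint\ell\,\Conway_K\;\eqdot\;\prod_{\zeta^{2^{a-1}}=1}\Delta_{L_1}(t,\zeta),
\]
up to a unit of the form $\pm(\text{root of unity})\,t^\alpha$. By Theorem~\ref{thm:normalform} at $k=1$ (valid since $a\ge2$), $\Delta_{L_1}(t,y)\eqdot Q_1(u,y+y^{-1})$ with a fixed unit $c\,t^{\alpha_0}y^{\beta_0}$. Substituting $y=\zeta$ and multiplying over all $\zeta$, the $y^{\beta_0}$ contributions multiply to $\prod\zeta^{\beta_0}=\pm1$. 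The product $\prod_\zeta Q_1(u,\zeta+\zeta^{-1})$ is Galois-invariant and hence lies in $\Z[u]=\Z[w]$. Both sides are then in $\Z[w]$ and agree up to $\pm t^\alpha$ (a root of unity in $\Z[w]^\times\cdot t^{\Z}$ must be $\pm1$). By the argument at the end of Theorem~\ref{thm:normalform}, they agree up to sign.

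The sign is the main delicate point, and I expect it to be the main obstacle. Evaluating at $w=0$ is awkward because the individual factors $Q_1(2,\zeta+\zeta^{-1})$ are not obviously controlled. I will instead group the roots into pairs. The roots $\zeta=\pm1$ give $Q_1(u,2)Q_1(u,-2)=\qint\ell\knotpoly1 R_1$. The remaining roots come in conjugate pairs $\zeta,\zeta^{-1}$ with equal factors, which gives the squared norms $(\mathrm N_kQ_1(u,\lambda_k))^2$ for primitive $2^{k+1}$-th roots, $1\le k\le a-2$. This proves the second equality in \eqref{eq:cyclores} exactly, since the primitive $2^{k+1}$-th roots $\zeta$ have $\zeta+\zeta^{-1}=2\cos(j\pi/2^k)$ with $j$ odd, each value occurring twice. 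At $w=0$ the right side is $\ell\cdot\eps_\ell\cdot(\text{square})$, using $R_1(0)=\eps_\ell$ from Theorem~\ref{thm:telescope} and $\knotpoly1(0)=1$. The left side is $\eps_\ell\ell\cdot 1$. Provided the norms are nonzero at $w=0$, the signs then match. Nonvanishing holds because the full product is $\pm\ell\ne0$ at $w=0$.

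Alternatively, I can fix the sign structurally by induction on $a$ using Theorem~\ref{thm:telescope}: $\eps_\ell\qint\ell\knotpoly{a}=\qint\ell\knotpoly{a-1}R_{a-1}$. This reduces the problem to showing $Q_{k}(u,\pm2)$ relates to $Q_{k-1}$ via the $y^2$-substitution. That requires tracking normalizations of $Q_k$ versus $Q_{k-1}$, which is messier, so the constant-term comparison above is the preferred route.
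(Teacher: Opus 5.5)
Your proposal is correct. Its architecture matches the paper's: iterate Proposition~\ref{prop:covering} down the tower (the paper's Lemma~\ref{lem:regrep}), set $y=1$, apply \eqref{eq:torres} and the normal form of $L_1$, use Galois invariance to land in $\Z[u]$, and group the roots by exact order for the second equality.

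The one genuine difference is how you fix the sign. Both arguments evaluate at $t=1$ ($w=0$), but they control the factors differently.

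The paper controls \emph{every} factor. It applies Torres specialization in the axis variable, $\Delta_{L_1}(1,y)\eqdot (y^\ell-1)/(y-1)$, together with $Q_1(2,2)=\ell>0$. This gives $Q_1(2,v)=c_\ell(v)$, where $c_\ell(y+y^{-1})=y^{-(\ell-1)/2}(y^\ell-1)/(y-1)$. A cyclotomic computation then yields $\prod_\zeta c_\ell(\zeta+\zeta^{-1})=\eps_\ell\ell$; it uses that $\zeta\mapsto\zeta^\ell$ permutes the nontrivial roots since $\ell$ is odd, and that $\prod_{\zeta\neq1}\zeta=-1$.

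You compute only the factors at $\zeta=\pm1$. These are $Q_1(2,2)=\ell$ and $Q_1(2,-2)=R_1(0)=\eps_\ell$, the latter from Theorem~\ref{thm:telescope}. The remaining factors pair up as $Q_1(2,\zeta+\zeta^{-1})^2$ with real arguments, so they contribute a nonnegative real $S$. Then $\ell=\pm\eps_\ell\ell S$ forces $S=1$ and the sign $+$.

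What each route buys:
\begin{itemize}
\item Yours is shorter and avoids both the second Torres formula and the root-of-unity bookkeeping. However, it depends on Theorem~\ref{thm:telescope}, and hence on the mod-$4$ Lemma~\ref{lem:Rcongruence}.
\item The paper's route is independent of the telescope. It also yields the explicit values $Q_1(2,\zeta+\zeta^{-1})=c_\ell(\zeta+\zeta^{-1})$, and in particular an independent, purely topological derivation of $R_1(0)=c_\ell(-2)=\eps_\ell$.
\end{itemize}

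A small point of phrasing: your parenthetical about ``a root of unity in $\Z[w]^\times\cdot t^{\Z}$'' should instead say the following. The unit $c\,t^\alpha$ satisfies $\qint\ell\Conway_K=c\,t^\alpha\,\Pi$ with both sides nonzero in $\Z[t^{\pm1}]$. Comparing a nonzero coefficient shows $c\in\Q$, hence $c=\pm1$. Otherwise your handling of the root-of-unity units is, if anything, more careful than the paper's statement of Lemma~\ref{lem:regrep}.
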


Note that for $a=2$ the product is $Q_1(u,2)Q_1(u,-2)=(M+2E)(M-2E)$, recovering
Corollary~\ref{cor:order4} in the $a=2$ case. The first two norms are: $\mathrm{N}_1=\operatorname{id}$ , while
$\mathrm{N}_2Q_1(u,\sqrt2)=Q_1(u,\sqrt2)\,Q_1(u,-\sqrt2)$.

The heart of the proof is the following covering formula, in which the regular
representation of the deck group appears explicitly: there is one factor on the right
for each character of $\Z_{2^{a-1}}$.

\begin{lemma}\label{lem:regrep}
With $N=2^{a-1}$,
\[
\Delta_{L_a}(t,y^{N})\;\eqdot\;\prod_{\zeta^{N}=1}\Delta_{L_1}(t,\zeta y),
\]
where $\eqdot$ denotes equality up to $\pm t^iy^j$.
\end{lemma}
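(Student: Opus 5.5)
The plan is induction on $a$, iterating Proposition~\ref{prop:covering}. For $a=2$ we have $N=2$, and the claim is exactly Proposition~\ref{prop:covering} with $k=2$: $\Delta_{L_2}(t,y^2)\eqdot\Delta_{L_1}(t,y)\Delta_{L_1}(t,-y)$, the product running over $\zeta=\pm1$.

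For general $a$, the tower $K_a\to\cdots\to K_1$ is available by Definition~\ref{def:tower}, with each $L_k$ standard. The inductive hypothesis, applied to the sub-tower starting at $K_{a-1}$ (which has order $2^{a-1}$ and the same base $L_1$), gives
\[
\Delta_{L_{a-1}}(t,Y^{N/2})\eqdot\prod_{\omega^{N/2}=1}\Delta_{L_1}(t,\omega Y).
\]
Proposition~\ref{prop:covering} at level $a$ gives $\Delta_{L_a}(t,y^2)\eqdot\Delta_{L_{a-1}}(t,y)\Delta_{L_{a-1}}(t,-y)$. Substituting $y\mapsto y^{N/2}$ (legitimate, since this is a ring endomorphism of $\Z[t^{\pm},y^{\pm}]$ carrying units $\pm t^iy^j$ to units) yields
\[
\Delta_{L_a}(t,y^N)\eqdot\Delta_{L_{a-1}}(t,y^{N/2})\,\Delta_{L_{a-1}}(t,-y^{N/2}).
\]
Now choose $\eta$ with $\eta^{N/2}=-1$, a primitive $N$-th root of unity. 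Then $-y^{N/2}=(\eta y)^{N/2}$, and we apply the inductive hypothesis twice, once with $Y=y$ and once with $Y=\eta y$. This step requires working over $\Z[\zeta_N][t^{\pm},y^{\pm}]$. The map $y\mapsto \eta y$ carries units $\pm t^iy^j$ to $\pm\eta^j t^iy^j$, so the equality now holds up to root-of-unity multiples. The two products combine to give
\[
\prod_{\omega^{N/2}=1}\Delta_{L_1}(t,\omega y)\Delta_{L_1}(t,\omega\eta y)=\prod_{\zeta^N=1}\Delta_{L_1}(t,\zeta y),
\]
since $\{\omega,\omega\eta\}$ runs over the $N$-th roots of unity exactly once.

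The main obstacle is bookkeeping the unit ambiguity, which we want to reduce back to $\pm t^iy^j$. The right-hand side $\prod_\zeta\Delta_{L_1}(t,\zeta y)$ is Galois-invariant, hence lies in $\Z[t^{\pm},y^{\pm}]$, and the left-hand side does too. If $P=c\,t^iy^jP'$ with $P,P'$ integral and nonzero, and $c$ a unit of $\Z[\zeta_N]$ (here in fact a root of unity), then comparing any nonzero coefficient shows $c\in\Q$. Since $c$ is a root of unity, $c=\pm1$.

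Alternatively, Lemma~\ref{lem:regrep} could be proved in one stroke via the Shapiro theorem \cite[Thm.~3.8]{FV} for the $\Z/N$ cover $V_a\to V_1$, splitting the regular representation into the characters $y\mapsto\zeta y$. That route requires re-verifying the analogue of Lemma~\ref{lem:sharedcover}, namely that the composite cover is cyclic, classified by linking number with $C_2$ mod $N$. The inductive route avoids this by reusing Proposition~\ref{prop:covering}, so it is the one we would pursue.
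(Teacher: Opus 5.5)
Your proposal is correct and follows the paper's own argument. Like the paper, you induct on $a$ through repeated applications of Proposition~\ref{prop:covering}, and you merge the two half-products with a primitive $2^{a-1}$-st root of unity $\eta$ satisfying $\eta^{N/2}=-1$. Your Galois-invariance argument, showing that the root-of-unity ambiguity $\pm\eta^{j}$ collapses to $\pm1$, supplies a detail the paper leaves implicit.
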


\begin{proof}
A clean proof with minimal extra machinery involves $(a-1)$ applications of
Proposition~\ref{prop:covering}, by induction on $a$: writing
$-y^{2^{a-2}}=(\omega y)^{2^{a-2}}$ for $\omega$ a primitive $2^{a-1}$-st root of unity
merges the two half-products into the full one.
\end{proof}

\begin{remark}
Alternatively, as in the proof of Proposition~\ref{prop:covering}, the lemma is an application of the Shapiro-type theorem of \cite[Thm.~3.8]{FV}, with the group $\Z_2$ replaced by $\Z_{2^{a-1}}$ and its regular representation, though the details are a little fiddly. This is similar to~\cite[Corollary 4.2]{HKL}.
\end{remark}

\begin{proof}[Proof of Theorem~\ref{thm:cyclores}]
Set $y=1$ in Lemma~\ref{lem:regrep}:
$\Delta_{L_a}(t,1)\eqdot\prod_{\zeta^{N}=1}\Delta_{L_1}(t,\zeta)$. On the left, Torres
\eqref{eq:torres} gives $\Delta_{L_a}(t,1)\eqdot\frac{t^\ell-1}{t-1}\Delta_K(t)$,
which after symmetrization is $\qint\ell\,\Conway_K$. On the right, by
Theorem~\ref{thm:normalform} there are $c,d\in\Z$ and a sign $\sigma$ with
$\sigma t^cy^d\,\Delta_{L_1}(t,y)=Q_1(u,y+y^{-1})$; evaluating at $y=\zeta$ and taking
the product over $\zeta$,
\[
\prod_{\zeta^{N}=1}\Delta_{L_1}(t,\zeta)
=\sigma^{-N}t^{-Nc}\Big(\prod_\zeta\zeta\Big)^{-d}\,\Pi(u),
\qquad
\Pi(u):=\prod_{\zeta^{N}=1}Q_1\big(u,\zeta+\zeta^{-1}\big),
\]
and $\Pi\in\Z[u]$ since the product is Galois-stable. Hence
$\qint\ell\,\Conway_K=\delta\,t^{i}\,\Pi(u)$ for some sign $\delta$ and some $i$; both
sides are symmetric under $t\mapsto t^{-1}$, which forces $i=0$.

It remains to see $\delta=\eps_\ell$, which we do by evaluating at $t=1$. There the
left side equals $\ell$. On the
right, Torres applied to $L_1$ in the axis variable (using
$\operatorname{lk}(K_1,C_2)=\ell$ and $\Delta_{C_2}=1$), together with the
normalization $Q_1(\,\cdot\,,2)\big|_{t=1}=\ell\,\Conway_{K_1}(0)=\ell>0$, gives
\[
Q_1(u,v)\big|_{t=1}\;=\;c_\ell(v),\qquad
c_\ell(y+y^{-1}):=\sum_{j=-(\ell-1)/2}^{(\ell-1)/2}y^{\,j}
=y^{-(\ell-1)/2}\,\frac{y^{\ell}-1}{y-1}.
\]
The factor at $\zeta=1$ contributes $c_\ell(2)=\ell$. Since $\ell$ is odd (Lemma~\ref{lem:elleven}), $\zeta\mapsto\zeta^{\ell}$ permutes the
nontrivial $N$-th roots of unity, so
$\prod_{\zeta\neq1}\frac{\zeta^{\ell}-1}{\zeta-1}=1$, while
$\prod_{\zeta\neq1}\zeta^{-(\ell-1)/2}=(-1)^{(\ell-1)/2}=\eps_\ell$ because
$\prod_{\zeta\neq1}\zeta=-1$ for $N$ even. Hence $\Pi\big|_{t=1}=\eps_\ell\,\ell$,
forcing $\delta=\eps_\ell$ and proving the first equality of \eqref{eq:cyclores}.

For the second, partition the $N$-th roots of unity by exact order: $\zeta=1$ and
$\zeta=-1$ contribute $Q_1(u,2)$ and $Q_1(u,-2)$; for $1\leq k\leq a-2$ the
$\varphi(2^{k+1})=2^{k}$ primitive $2^{k+1}$-st roots satisfy $\zeta\neq\zeta^{-1}$ and
map two-to-one under $\zeta\mapsto\zeta+\zeta^{-1}$ onto the $2^{k-1}$ Galois
conjugates $2\cos(j\pi/2^{k})$, $j$ odd, of $\lambda_k$, contributing
$\big(\mathrm{N}_kQ_1(u,\lambda_k)\big)^{2}$.
\end{proof}

\begin{remark}\label{rem:characters}
The right side of \eqref{eq:cyclores} has one factor per character of the deck group:
the trivial character contributes $Q_1(u,2)=\qint\ell\,\Conway_{K_1}$, the quotient
knot; the sign character contributes $Q_1(u,-2)=R_1$; and the complex characters pair
under conjugation, each pair contributing a real norm. Comparing with the telescope of
Theorem~\ref{thm:telescope} identifies
$\widehat R_{k+1}=\big(\mathrm{N}_kQ_1(u,\lambda_k)\big)^{2}$ for $k\geq1$: the fact
that every correction factor above the first is a perfect square is, from this
viewpoint, complex conjugation acting freely on the nonreal characters.
\end{remark}

\begin{remark}\label{rem:braided}
For the braided families of Appendices~\ref{app:burau} and~\ref{app:fivestrand} the
theorem is an eigenvalue identity: with $N_\alpha=\bar\psi_\alpha$ the level-$1$ Burau
monodromy, $\eps_\ell\qint\ell\,\Conway_{K_a}=\det\big(N_\alpha^{2^{a-1}}-I\big)$, and
factoring $\lambda^{2^{a-1}}-1=\prod_{\zeta^{2^{a-1}}=1}(\lambda-\zeta)$ over the
eigenvalues and recentering each $\det(N_\alpha-\zeta I)$ reproduces
\eqref{eq:cyclores} factor by factor.
\end{remark}

\section{Examples}\label{sec:examples}

\begin{figure}
\braidwheel{2}{-1,-1,-1,-1,-1,2,2,2,2,2,2,2}\qquad
\braidwheel{4}{-1,-1,-1,-1,-1,2,2,2}\qquad \braidwheel{8}{1,1,1,2,2,2}
\caption{Positive amphicheiral braids of orders 2, 4 and 8, with generating words $\sigma_1^{-5}\sigma_2^7$, $\sigma_1^{-5}\sigma_2^3$ and $\sigma_1^{3}\sigma_2^3$ respectively.}\label{fig:braidexamples}
\end{figure}

\subsection{The braid families of Conant--Manathunga}\label{subsec:fibonacci}
The order-$4$ examples of \cite{CM} are closures of $3$-braids $\sigma_1^n\sigma_2^m\sigma_1^{-n}\sigma_2^{-m}\sigma_1^n\sigma_2^m\sigma_1^{-n}\sigma_2^{-m}$ about the axis $C_2$. See the left of Figure~\ref{fig:braidexamples} for an example with $n=-5, m=3$. An order $8$ example is also shown.
In \cite{CM} (recomputed in this paper's Appendix), it is shown that
\[
\Conway_K \;=\; X\,\big(4-X(w+3)\big),\qquad X=F_n^2F_m^2,
\]
where $F_j$ denotes the Fibonacci polynomials ($F_1=1$, $F_2=z$, $F_{j+1}=zF_j+F_{j-1}$) and $n,m$ are odd (so that $F_nF_m\in\Z[w]$ and $\Conway_K(0)=1$). Here $\ell=3$, $\eps_\ell=-1$, $\qint3=w+3$. Writing $T:=(w+3)X$:
\[
\Conway_K=X(4-T)=\knotpoly{1}\cdot\widehat R_1,\qquad \knotpoly{1}=X=(F_nF_m)^2,\quad \widehat R_1=4-T,
\]
which is precisely the factorization of Theorem~\ref{thm:telescope} with base square $\varphi=F_nF_m$ and one correction factor. Consistency checks: $\widehat R_1(0)=4-3=1$; $\widehat R_1=4-T\equiv-\qint3\,\knotpoly{1}=\eps_\ell\qint\ell\knotpoly{1}\pmod4$ (Lemma~\ref{lem:Rcongruence}); and Corollary~\ref{cor:order4} reads
\[
-\qint3\,\Conway_K=T(T-4)=(T-2)^2-4,\qquad M=T-2,\ E=1 .
\]
The corresponding two-variable polynomial is forced: $Q_1(u,v)=v+(T-2)$, i.e.
\[
\Delta_{L_1}(t,y)\;\eqdot\;y+y^{-1}+(w+3)F_n^2F_m^2-2,
\]
with $Q_1(u,2)=T=\qint3\knotpoly{1}$ (Torres) and $Q_1(u,-2)=T-4=R_1$, $R_1(0)=-1=\eps_\ell$. Proposition~\ref{prop:covering} then \emph{predicts} the two-variable polynomial of the original link:
\[
\Delta_{K\cup C_2}(t,x)\;\eqdot\;x^2-\big((T-2)^2-2\big)\,x+1 .
\]
Both displayed two-variable formulas are directly testable against the Burau-matrix computations of \cite{CM}; the second is equivalent to the statement that the reduced Burau matrix (with axis variable) of the symmetrized braids has trace $(T-2)^2-2$ and determinant $1$, and the first to the corresponding statement one level down, trace $T-2$. Conversely, the theorem \emph{explains} the shape of the formula of \cite{CM}: the Fibonacci squares are the Hartley--Kawauchi square of the quotient knot $K_1$, the closure of $\sigma_1^n\sigma_2^m\sigma_1^{-n}\sigma_2^{-m}$, whose Conway polynomial is predicted to be $F_n^2F_m^2$. This is indeed the case and follows directly from the Burau calculations in the proof of \cite{CM}, Proposition 3.13.

In an appendix we step through the computations for the various Alexander polynomials and verify the predictions.

\subsection{The Ermotti--Hongler--Weber knot}\label{subsec:EHW}
The counterexample of \cite{EHW} to the integral splitting \eqref{eq:split} (see Figure~\ref{fig:ehw_knot}) carries an orientation-reversing symmetry of order $4$, with axis linking number $\ell=3$. As pictured, this is not obviously invariant under the standard rotary symmetry where we rotate and then reflect through the xy-plane.
It is more obviously invariant under a symmetry which rotates the first two coordinates and then reflects through the origin. It can be converted to standard form by rotating two of four repeating blocks by 180 degrees around the circular axis $C_1$. More precisely, represent the given knot as the closure of a tangle $B B' BB'$, where $B$ is some choice of tangle capturing $90$ degrees of the diagram and $B'$ is the tangle reflection reversing strand order. Let $\beta$ be the fundamental braid on three strands where $\beta^2$ is a full twist generating the center of the braid group. The knot is isotopic to the closure of $B (\beta^{-1} \bar B \beta) B(\beta^{-1}\bar B\beta)$, where $\bar B$ is the reflection through the $xy$ plane. Conjugate to move $\beta$ to the front and note $\beta^{-1}=\bar \beta$, so the knot is the closure of $(\beta B)\overline{\beta B}(\beta B)\overline{\beta B}$, which is indeed invariant under the standard rotary reflection.

\begin{figure}
    \centering
    \scalebox{0.75}{\hvwheel{4}{\JimCell}}\qquad
    \scalebox{0.75}{\hvwheel{8}{\JimCell}}
    \caption{The EHW knot (left)is positive amphicheiral with generating symmetry of order $4$. An order 8 variant is pictured on the right. }
    \label{fig:ehw_knot}
\end{figure}
 According to \cite{EHW}, the Conway polynomial is
\[
\Conway_K=(1-w)\,(2w^2-1)^2\,P(w),\qquad P=4w^4+16w^3+12w^2-16w+1
\]
Corollary~\ref{cor:order4} then asserts an exact identity $-\qint3\Conway_K=M^2-4E^2$, and indeed, using
\[
(w+3)(w-1)=(w+1)^2-4,\qquad P=\big(2w^4{+}8w^3{+}6w^2{-}8w{+}1\big)^2-4\big(w^4{+}4w^3{+}3w^2{-}4w\big)^2,
\]
and composing norms as in Lemma~\ref{lem:normform} (for the form $M^2-4E^2$),
\[
-\qint3\,\Conway_K \;=\; M^2-4E^2,\qquad
\begin{aligned}
M&=(2w^2-1)\big(2w^5+14w^4+30w^3+10w^2-23w+1\big),\\
E&=(2w^2-1)\big(w^5+7w^4+15w^3+5w^2-12w+1\big).
\end{aligned}
\]
 Modulo $4$ it collapses to $-\qint3\Conway_K\equiv M^2$, consistent with Theorem~\ref{thm:mod4}. 

\section{Questions}\label{sec:questions}

\begin{question}
Theorem~\ref{thm:A} produces, for each level, a $y$-palindromic two-variable polynomial $Q_k$, well defined up to the stated normalization. Is there an intrinsic normalization of a ``half'' invariant refining it, in the spirit of the half-Conway polynomial of Boyle--Chen \cite{BC} for strongly negative amphicheiral knots, together with an equivariant skein relation computing it for rotary reflections of order $2^a$?
\end{question}

\begin{question}
Which congruences modulo $8$ (and higher powers of $2$) on $\Conway_K$ follow from the existence of the presentation $\eps_\ell\qint\ell\Conway_K=M^2-4E^2$ of Corollary~\ref{cor:order4}, beyond the constant-term constraint $E(0)\equiv(\ell^2-1)/8\pmod2$? Is $E$ (for a suitable normalization) a well-defined invariant of the symmetric knot, with a topological interpretation? (For the Fibonacci families $E=1$; for the EHW knot $E$ has degree $7$ in $w$.) More generally, which congruences follow from the full integral factorization of Theorem~\ref{thm:telescope}?
\end{question}

\begin{question}
The counterexamples of \cite{CM} to Stoimenow's conjectures concern the leading coefficient of $\Conway_K$ for amphicheiral knots. Does Theorem~\ref{thm:telescope} impose any constraints on the leading coefficients of $\Conway_K$ weaker than the ones conjectured by Stoimenow?
\end{question}

\begin{question}
The mod $4$ Kawauchi conjecture is equivalent to the statement that a series of degree $4n$ Vassiliev invariants, $pc_{4n}$, constructed from Conway polynomial coefficients, vanishes on amphicheiral knots \cite{Con}. This follows from the stronger conjecture that each $pc_{4n}$ is congruent modulo $2$ to an integer valued degree $4n-1$ invariant, $v_{4n-1}$, that reverses sign under mirror image. Now that the consequence of the stronger conjecture has been verified, can we shed any light on what the mysterious $v_{4n-1}$ are?
\end{question}

\appendix
 
\section{The Conant--Manathunga family via trace calculus}\label{app:burau}
 
This appendix computes, from scratch, the two-variable Alexander polynomials of the links $L_2=K\cup C_2$ and $L_1=K_1\cup C_2$ for the order-$4$ braid family of \cite{CM}, verifying the formulas displayed in \S\ref{subsec:fibonacci} and, with them, every level of the machinery of \S\S\ref{sec:levelwise}--\ref{sec:telescope} on an infinite family. The family consists of the closures of the $3$-braids
\[
\beta=\alpha^2,\qquad \alpha=\sigma_1^{\,n}\sigma_2^{\,m}\sigma_1^{-n}\sigma_2^{-m}\in B_3,\qquad n,m\ \text{odd},
\]
braided about the axis $C_2$, the order-$4$ rotary reflection acting by a quarter-turn of the pages; the $\pi$-rotation $r=g^2$ advances pages by a half-turn, so the quotient knot $K_1$ is the closure of the half-word $\alpha$ about the same axis. (The underlying permutations of $\alpha$ and $\alpha^2$ are the two $3$-cycles, so both closures are knots, and $\ell=3$ at both levels.)
 
\subsection{Burau computes the axis polynomial}\label{app:buraugeneral}
Let $\widehat\beta$ be the closure of $\beta\in B_k$ with $\widehat\beta$ a knot, and $C_2$ the braid axis. The exterior $V$ of $\widehat\beta\cup C_2$ fibers over $S^1$ with fiber the $k$-punctured disk $D_k$ and monodromy $h_\beta$; on $H_1(V)=\langle t,x\rangle$ the strand meridians all represent $t$ (the closure being connected) and the meridian of $C_2$ is the base circle of the fibration. Unwrapping first $x$ and then $t$ exhibits the universal abelian cover as the mapping torus, over $\Lambda=\Z[t^{\pm1}]$, of a lift $\tilde h_\beta$ acting on the infinite cyclic cover $\widetilde D_k$. The torsion of an algebraic mapping torus is $\prod_i\det\big(x\,\tilde h_*-\mathrm{id}\mid H_i(\widetilde D_k)\otimes\Q(t)\big)^{(-1)^{i+1}}$; here $H_0(\widetilde D_k)=\Lambda/(t-1)$ dies over $\Q(t)$, and $H_1(\widetilde D_k)\cong\Lambda^{k-1}$ carries, by definition, the reduced Burau representation $\bar\psi\colon B_k\to GL_{k-1}(\Lambda)$. Hence, by the identification $\tau\eqdot\Delta$ of \S\ref{sec:conventions},
\begin{equation}\label{eq:buraupoly}
\Delta_{\widehat\beta\cup C_2}(t,x)\;\eqdot\;\det\big(x\,\bar\psi_\beta(t)-I_{k-1}\big),
\end{equation}
the choice of lift $\tilde h_\beta$ (ambiguous by $t^{\Z}$) effecting $x\mapsto xt^k$, within the $\eqdot$ ambiguity. This is essentially Burau's theorem \cite{Bur}; see \cite{Mor} for the extension to arbitrary diagrams. Since $\det\bar\psi(\sigma_i)=-t$, the extreme $x$-coefficients of \eqref{eq:buraupoly} are the units $\pm t^{\Z}$. Setting $x=1$ in \eqref{eq:buraupoly} and comparing with \eqref{eq:torres} recovers the classical $\det(\bar\psi_\beta-I)\eqdot(1+t+\cdots+t^{k-1})\,\Delta_{\widehat\beta}(t)$.
 
\subsection{Normalization and traces}
Work in $B_3$ with the reduced Burau matrices
\[
A_1=\bar\psi(\sigma_1)=\begin{pmatrix}-t&1\\0&1\end{pmatrix},\qquad
A_2=\bar\psi(\sigma_2)=\begin{pmatrix}1&0\\t&-t\end{pmatrix},\qquad \det A_1=\det A_2=-t.
\]
(Any of the standard conventions, differing by conjugation and transposition, gives the same traces below.) Let $s=t^{1/2}$, $z=s-s^{-1}$, and normalize $a_1:=A_1/(is)$, $a_2:=A_2/(is)$, so $a_1,a_2\in SL_2$. Because $\alpha$ is a commutator, the scalars cancel identically:
\[
N_\alpha:=\bar\psi(\alpha)=A_1^nA_2^mA_1^{-n}A_2^{-m}=a_1^na_2^ma_1^{-n}a_2^{-m},\qquad \det N_\alpha=1 ,
\]
so $N_\alpha\in SL_2\big(\Z[t^{\pm1}]\big)$ on the nose and no unit-fixing is needed. The three basic traces are
\[
\operatorname{tr}a_1=\operatorname{tr}a_2=\frac{1-t}{is}=iz,\qquad \operatorname{tr}(a_1a_2)=\frac{\operatorname{tr}(A_1A_2)}{(is)^2}=\frac{-t}{-t}=1 .
\]
 
Let $F_k$ denote the Fibonacci polynomials ($F_0=0$, $F_1=1$, $F_{k+1}=zF_k+F_{k-1}$) and $L_k:=F_{k+1}+F_{k-1}$ the Lucas polynomials, and write $D:=z^2+4$. We use the standard identities, valid for all indices and provable from the Binet-type formulas $F_k=(\sigma^k-\tau^k)/(\sigma-\tau)$, $L_k=\sigma^k+\tau^k$ with $\sigma\tau=-1$, $\sigma+\tau=z$, and noting that $D=(\sigma-\tau)^2$:
\begin{equation}\label{eq:lucasids}
\begin{split}
L_k^2=D\,F_k^2+4(-1)^k,\qquad
L_jL_k=L_{j+k}+(-1)^kL_{j-k},\\
D\,F_jF_k=L_{j+k}-(-1)^kL_{j-k},\qquad
L_{k+2}+L_{k-2}=(z^2+2)L_k .
\end{split}
\end{equation}
 
\begin{lemma}\label{lem:powers}
For the Chebyshev-type polynomials $S_0=1$, $S_1=x$, $S_{k+1}=x S_k-S_{k-1}$ one has $S_{k}(iz)=i^{\,k}F_{k+1}(z)$. Consequently, for $c\in SL_2$ with $\operatorname{tr}c=iz$,
\[
c^{\,k}=i^{\,k-1}F_k\,c-i^{\,k-2}F_{k-1}\,I,\qquad \operatorname{tr}(c^{\,k})=i^{\,k}L_k .
\]
\end{lemma}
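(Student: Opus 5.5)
The plan is to prove the first claim, $S_k(iz)=i^kF_{k+1}(z)$, by induction on $k$, using the recurrences. The base cases are immediate: $S_0(iz)=1=i^0F_1$, and $S_1(iz)=iz=i\,F_2$ since $F_2=zF_1+F_0=z$. For the inductive step, assume the identity for $k$ and $k-1$. Then
\[
S_{k+1}(iz)=iz\,i^kF_{k+1}-i^{k-1}F_k=i^{k+1}\big(zF_{k+1}+F_k\big)=i^{k+1}F_{k+2},
\]
where $-i^{k-1}=i^{k+1}$ because $i^2=-1$.

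Next, for the matrix power formula, use Cayley--Hamilton. Since $c\in SL_2$, we have $c^2=(\operatorname{tr}c)\,c-I$. Write $x=\operatorname{tr}c$. By induction, $c^k=S_{k-1}(x)\,c-S_{k-2}(x)\,I$, with the convention $S_{-1}=0$ (which is consistent with the recurrence, since $S_1=xS_0-S_{-1}$). The inductive step is to multiply by $c$ and substitute $c^2=xc-I$:
\[
c^{k+1}=S_{k-1}(xc-I)-S_{k-2}c=(xS_{k-1}-S_{k-2})\,c-S_{k-1}\,I=S_k\,c-S_{k-1}\,I.
\]
Substituting $x=iz$ and applying the first claim gives $S_{k-1}(iz)=i^{k-1}F_k$ and $S_{k-2}(iz)=i^{k-2}F_{k-1}$, which is exactly the stated formula for $c^k$. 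The case $k=1$ uses $F_0=0$, and $k=0$ uses $F_{-1}=1$, which is consistent with running the Fibonacci recurrence backwards.

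Finally, take the trace of this expression, using $\operatorname{tr}c=iz$ and $\operatorname{tr}I=2$:
\[
\operatorname{tr}(c^k)=i^{k-1}F_k\,iz-2i^{k-2}F_{k-1}=i^k\big(zF_k+2F_{k-1}\big)=i^k\big(F_{k+1}+F_{k-1}\big)=i^kL_k.
\]
Here $-2i^{k-2}=2i^k$, and the Fibonacci recurrence gives $zF_k+F_{k-1}=F_{k+1}$.

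The statement is implicitly meant for all integers $k$, since it is later applied with negative exponents such as $a_1^{-n}$. To handle negative $k$, extend $S_k$, $F_k$ and $L_k$ to negative indices by running the recurrences backwards. The two-sided identities then hold, because both sides satisfy the same second-order linear recurrence in $k$ and agree at two consecutive indices. Equivalently, one can apply the result to $c^{-1}$, which also has trace $iz$, and use $F_{-k}=(-1)^{k+1}F_k$.

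The only real obstacle is bookkeeping of the powers of $i$ and the index conventions at and below $0$. Everything else is a direct application of Cayley--Hamilton in $SL_2$.
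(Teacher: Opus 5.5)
Your proof is correct and follows the paper's own argument. Both proofs prove $S_k(iz)=i^kF_{k+1}$ by induction on the recurrence, then use Cayley--Hamilton to get $c^k=S_{k-1}(\operatorname{tr}c)\,c-S_{k-2}(\operatorname{tr}c)\,I$, then take traces. The paper passes through $S_k-S_{k-2}$ at the trace step, while you compute it directly from the $F$-form. Your explicit extension to negative $k$ via the two-sided recurrence is a useful addition, since the paper later applies the lemma to powers like $a_1^{-n}$ and to odd $n,m$ of either sign without comment.
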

 
\begin{proof}
The first claim is induction: $i^{k+1}F_{k+2}=iz\cdot i^kF_{k+1}-i^{k-1}F_k$ is the Fibonacci recursion. The power formula follows from Cayley--Hamilton $c^2=(\operatorname{tr}c)\,c-I$, allowing any power of $c$ to be rewritten as a linear combination of $c$ and $I$. These coefficients satisfy the defining recursion for $c$, so $c^k=S_{k-1}(\operatorname{tr}c)\,c-S_{k-2}(\operatorname{tr}c)I$, and taking traces gives $\operatorname{tr}(c^k)=izS_{k-1}-2S_{k-2}=S_k-S_{k-2}=i^k(F_{k+1}+F_{k-1})$.
\end{proof}
 
\begin{lemma}\label{lem:producttrace}
For $m$ odd, $\operatorname{tr}(a_1^na_2^m)=i^{\,n+m-2}H$ with
\[
H\;=\;F_nF_m-z\big(F_nF_{m-1}+F_{n-1}F_m\big)-2F_{n-1}F_{m-1}
\;=\;\frac{(z^2+3)\,L_{n-m}-L_{n+m}}{D},
\]
and moreover
\begin{equation}\label{eq:keytrace}
2H\;=\;(D-2)\,F_nF_m\;-\;L_nL_m .
\end{equation}
\end{lemma}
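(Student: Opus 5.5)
Expand the two powers with Lemma~\ref{lem:powers} and take the trace of their product. Since $\operatorname{tr}a_1=\operatorname{tr}a_2=iz$,
\[
a_1^n=i^{n-1}F_n a_1-i^{n-2}F_{n-1}I,\qquad a_2^m=i^{m-1}F_m a_2-i^{m-2}F_{m-1}I .
\]
Multiplying these and using $\operatorname{tr}(a_1a_2)=1$, $\operatorname{tr}a_j=iz$ and $\operatorname{tr}I=2$ gives
\[
\operatorname{tr}(a_1^na_2^m)=i^{n+m-2}F_nF_m-i^{n+m-3}\,iz\,(F_nF_{m-1}+F_{n-1}F_m)+2\,i^{n+m-4}F_{n-1}F_{m-1}.
\]
Factoring out $i^{n+m-2}$ turns the middle coefficient into $-z$ and the last into $2i^{-2}=-2$. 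This is exactly the first expression for $H$. The argument is pure bookkeeping of powers of $i$ and does not use the parity of $m$.

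For the closed form, multiply $H$ by $D$ and convert every product $DF_jF_k$ via \eqref{eq:lucasids}:
\[
DF_jF_k=L_{j+k}-(-1)^kL_{j-k}.
\]
Here $m$ odd enters, fixing the signs $(-1)^m=-1$ and $(-1)^{m-1}=1$. The relevant terms are:
\begin{itemize}
\item $DF_nF_m=L_{n+m}+L_{n-m}$;
\item $DF_{n-1}F_{m-1}=L_{n+m-2}-L_{n-m}$;
\item $D(F_nF_{m-1}+F_{n-1}F_m)=2L_{n+m-1}$, because the $L_{n-m\pm1}$ terms cancel in pairs.
\end{itemize}
So
\[
DH=L_{n+m}+L_{n-m}-2zL_{n+m-1}-2L_{n+m-2}+2L_{n-m}.
\]
The Lucas recursion $L_{n+m}=zL_{n+m-1}+L_{n+m-2}$ collapses $-2zL_{n+m-1}-2L_{n+m-2}$ to $-2L_{n+m}$, leaving $DH=3L_{n-m}-L_{n+m}$. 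This disagrees with the stated $(z^2+3)L_{n-m}-L_{n+m}$ by $z^2L_{n-m}$, so before committing I would recheck:
\begin{itemize}
\item the index convention in $L_jL_k=L_{j+k}+(-1)^kL_{j-k}$;
\item the trace normalization $\operatorname{tr}(a_1a_2)=1$;
\item whether the intended first formula carries a different coefficient.
\end{itemize}
To settle it I would test on $n=m=1$: directly $H=\operatorname{tr}(a_1a_2)=1$, while $3L_0-L_2=6-(z^2+2)=4-z^2$ and $D=z^2+4$. These disagree, so the fastest way to pin the conventions is to compute $H$ for small $n,m$ straight from the matrices.

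For \eqref{eq:keytrace}, multiply by $D$ and expand the right side using $DF_nF_m=L_{n+m}+L_{n-m}$ ($m$ odd) and $L_nL_m=L_{n+m}-L_{n-m}$. This gives
\[
D\big((D-2)F_nF_m-L_nL_m\big)=(D-2)(L_{n+m}+L_{n-m})-DL_{n+m}+DL_{n-m}=(2D-2)L_{n-m}-2L_{n+m}.
\]
Compare this with $2DH$ from the closed form, then cancel $D$, which is harmless since $\Z[z]$ is a domain. With the stated closed form, $2(z^2+3)=2D-2$, so it matches exactly. This confirms the stated closed form, and indicates my slip in the middle computation, most likely in a sign of $DF_{n-1}F_{m-1}$ or of $2F_{n-1}F_{m-1}$, to be hunted down in the first step.

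The main obstacle is the sign bookkeeping: powers of $i$ and the parities $(-1)^k$ in \eqref{eq:lucasids}. The plan guards against it by cross-checking every identity at $n=m=1$ and at $n=1$, $m=3$ against direct matrix traces.
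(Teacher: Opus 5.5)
\textbf{There is a genuine gap: your proposal never establishes the closed form for $H$.} Your first step (the expansion via Lemma~\ref{lem:powers}) is correct, and so is your last step deriving \eqref{eq:keytrace} from the closed form. The error is in your third bullet. With $m$ odd,
\[
DF_nF_{m-1}=L_{n+m-1}-L_{n-m+1},\qquad DF_{n-1}F_m=L_{n+m-1}+L_{n-m-1}.
\]
The leftover terms $-L_{n-m+1}+L_{n-m-1}$ do \emph{not} cancel, since their indices differ by $2$. By the Lucas recursion they equal $-zL_{n-m}$. Hence
\[
D(F_nF_{m-1}+F_{n-1}F_m)=2L_{n+m-1}-zL_{n-m},
\]
and multiplying by $-z$ produces exactly the $+z^2L_{n-m}$ you were missing. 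Your (correct) collapse $L_{n+m}-2zL_{n+m-1}-2L_{n+m-2}=-L_{n+m}$ then gives $DH=(z^2+3)L_{n-m}-L_{n+m}$.

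Your other two conversions, $DF_nF_m=L_{n+m}+L_{n-m}$ and $DF_{n-1}F_{m-1}=L_{n+m-2}-L_{n-m}$, are right, as is the coefficient $-2$. So your guess that the slip lies in a sign of $DF_{n-1}F_{m-1}$ or of $2F_{n-1}F_{m-1}$ points at the wrong place.

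Second, your claim that the comparison with \eqref{eq:keytrace} ``confirms the stated closed form'' is circular. You computed $D\big((D-2)F_nF_m-L_nL_m\big)$ independently, but you compared it with $2DH$ computed \emph{from} the stated closed form. That only shows \eqref{eq:keytrace} and the closed form are equivalent; neither has been checked against the defining expression for $H$. A spot check at $n=m=1$ cannot substitute for the general identity either.

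Once the third bullet is corrected, your argument is essentially the paper's: product-to-sum via \eqref{eq:lucasids} with $m$ odd, then elimination of $z$ by the Lucas recursion, then \eqref{eq:keytrace} exactly as in your final paragraph. Your corrected route applies $L_{k+1}-L_{k-1}=zL_k$ directly. This slightly shortens the paper's computation, which expands $zL_k$ termwise and then invokes $L_{k+2}+L_{k-2}=(z^2+2)L_k$.
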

 
\begin{proof}
Expanding by Lemma~\ref{lem:powers} and using $\operatorname{tr}(a_1a_2)=1$, $\operatorname{tr}a_1=\operatorname{tr}a_2=iz$, $\operatorname{tr}I=2$:
\[
\operatorname{tr}(a_1^na_2^m)=i^{n+m-2}\Big[F_nF_m\cdot1-zF_nF_{m-1}-zF_{n-1}F_m+i^{-2}\,2F_{n-1}F_{m-1}\Big],
\]
which is the first expression. For the closed form, convert products to sums by \eqref{eq:lucasids} (with $m$ odd, so $(-1)^m=-1$, $(-1)^{m-1}=1$) and eliminate $z$ via $zL_k=L_{k+1}-L_{k-1}$:
\begin{align*}
D\,H&=\big(L_{n+m}+L_{n-m}\big)-z\big(2L_{n+m-1}-L_{n-m+1}+L_{n-m-1}\big)-2\big(L_{n+m-2}-L_{n-m}\big)\\
&=L_{n+m}+L_{n-m}-2\big(L_{n+m}-L_{n+m-2}\big)+\big(L_{n-m+2}-L_{n-m}\big)\\
&\quad-\big(L_{n-m}-L_{n-m-2}\big)-2L_{n+m-2}+2L_{n-m}\\
&=-L_{n+m}+L_{n-m}+\big(L_{n-m+2}+L_{n-m-2}\big)
\;=\;-L_{n+m}+(z^2+3)L_{n-m},
\end{align*}
by the last identity of \eqref{eq:lucasids}. Finally, again by \eqref{eq:lucasids} with $m$ odd,
\[
\begin{split}
(D-2)F_nF_m-L_nL_m=\frac{(z^2+2)\big(L_{n+m}+L_{n-m}\big)}{D}-\big(L_{n+m}-L_{n-m}\big)\\
=\frac{-2L_{n+m}+(2z^2+6)L_{n-m}}{D}=2H. \qedhere
\end{split}
\]
\end{proof}
 
\subsection{The commutator trace}
 
\begin{proposition}\label{prop:trace}
For $n,m$ odd, $\operatorname{tr}N_\alpha=2-(z^2+3)\,F_n^2F_m^2=2-T$, in the notation $T=(w+3)F_n^2F_m^2$ of \S\ref{subsec:fibonacci}.
\end{proposition}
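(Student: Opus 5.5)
The plan is to compute $\operatorname{tr}N_\alpha$ for the commutator $N_\alpha=[P,Q]$ with $P=a_1^n$, $Q=a_2^m\in SL_2$, using the Fricke identity
\[
\operatorname{tr}[P,Q]=(\operatorname{tr}P)^2+(\operatorname{tr}Q)^2+(\operatorname{tr}PQ)^2-\operatorname{tr}P\,\operatorname{tr}Q\,\operatorname{tr}PQ-2 .
\]
Every input is already available. By Lemma~\ref{lem:powers}, $\operatorname{tr}P=i^nL_n$ and $\operatorname{tr}Q=i^mL_m$. By Lemma~\ref{lem:producttrace}, $\operatorname{tr}PQ=i^{n+m-2}H$.

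First I track the powers of $i$. Since $n,m$ are odd, $i^{2n}=i^{2m}=-1$, and $i^{2(n+m-2)}=1$ because $n+m$ is even. The product term carries $i^{n}i^{m}i^{n+m-2}=i^{2(n+m)-2}=-1$, again because $n+m\equiv 2\pmod 4$ or $0\pmod 4$ makes $2(n+m)\equiv 0\pmod 4$. This gives
\[
\operatorname{tr}N_\alpha=-L_n^2-L_m^2+H^2+L_nL_mH-2.
\]

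Next I substitute $L_k^2=DF_k^2+4(-1)^k=DF_k^2-4$ for odd $k$, which reduces the expression to
\[
6-D(F_n^2+F_m^2)+H(H+L_nL_m).
\]
Now I use \eqref{eq:keytrace}. Write $X=F_nF_m$ and $Y=L_nL_m$, so that $2H=(D-2)X-Y$. Then $H+Y=\tfrac12\big((D-2)X+Y\big)$, and therefore
\[
H(H+Y)=\tfrac14\big((D-2)^2X^2-Y^2\big).
\]
The product formula gives $Y^2=L_n^2L_m^2=(DF_n^2-4)(DF_m^2-4)=D^2X^2-4D(F_n^2+F_m^2)+16$. Substituting, the $F_n^2+F_m^2$ terms cancel exactly against $-D(F_n^2+F_m^2)$. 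Using $(D-2)^2-D^2=4-4D$, what remains is
\[
6-4+(1-D)X^2=2-(z^2+3)X^2,
\]
since $D-1=z^2+3$. This is $2-T$.

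The main obstacle is sign bookkeeping: the powers of $i$, and confirming that the Fricke identity applies with these conventions, i.e.\ $N_\alpha=PQP^{-1}Q^{-1}$ with $P,Q\in SL_2$ over $\Z[s^{\pm1},i]$. The identity is polynomial, so it holds over any commutative ring. I would sanity-check the result at $n=m=1$: there $\operatorname{tr}(a_1a_2a_1^{-1}a_2^{-1})=2-(z^2+3)$ should follow directly from $\operatorname{tr}a_1=\operatorname{tr}a_2=iz$ and $\operatorname{tr}a_1a_2=1$. Indeed Fricke gives $-z^2-z^2+1-(iz)^2\cdot 1-2=-z^2-1$, which matches.
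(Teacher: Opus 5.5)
Your proposal is correct and follows essentially the same route as the paper's proof. Both apply the Fricke commutator-trace identity with $X=a_1^n$, $Y=a_2^m$, take the inputs from Lemmas~\ref{lem:powers} and~\ref{lem:producttrace}, and factor $H(H+L_nL_m)$ as a difference of squares via \eqref{eq:keytrace} and $L_k^2=DF_k^2-4$; the only cosmetic difference is that you carry the trace directly (the constant $6-4$) where the paper first computes $\kappa=\operatorname{tr}N_\alpha-2$.
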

 
\begin{proof}
By the Fricke identity, for $X,Y\in SL_2$ with $x=\operatorname{tr}X$, $y=\operatorname{tr}Y$, $v=\operatorname{tr}(XY)$:
\[
\operatorname{tr}\big(XYX^{-1}Y^{-1}\big)-2\;=\;x^2+y^2+v^2-xyv-4\;=:\;\kappa .
\]
Take $X=a_1^n$, $Y=a_2^m$. By Lemma~\ref{lem:powers} and \eqref{eq:lucasids} with $n,m$ odd: $x^2=(i^nL_n)^2=-L_n^2=4-DF_n^2$, likewise $y^2=4-DF_m^2$; by Lemma~\ref{lem:producttrace}, $v=i^{n+m-2}H$ with $n+m$ even, so $v^2=H^2$ and $xyv=i^{n}i^{m}i^{n+m-2}L_nL_mH=-L_nL_mH$. Hence
\[
\kappa=4-D\big(F_n^2+F_m^2\big)+H\big(H+L_nL_m\big).
\]
By \eqref{eq:keytrace}, the last summand factors as a difference of squares:
\[
H\big(H+L_nL_m\big)=\frac{(D-2)F_nF_m-L_nL_m}{2}\cdot\frac{(D-2)F_nF_m+L_nL_m}{2}
=\frac{(D-2)^2F_n^2F_m^2-L_n^2L_m^2}{4},
\]
and substituting $L_n^2L_m^2=(DF_n^2-4)(DF_m^2-4)$ from \eqref{eq:lucasids}:
\[
\begin{split}
H\big(H+L_nL_m\big)=\frac{(D^2-4D+4)-D^2}{4}F_n^2F_m^2+D\big(F_n^2+F_m^2\big)-4\\
=-(D-1)F_n^2F_m^2+D\big(F_n^2+F_m^2\big)-4 .
\end{split}
\]
Everything cancels except $\kappa=-(D-1)F_n^2F_m^2=-(z^2+3)F_n^2F_m^2$, and $\operatorname{tr}N_\alpha=\kappa+2$.
\end{proof}
 
\subsection{The two-variable polynomials}
 
\begin{theorem}\label{thm:appendixmain}
For the family above, with $T=(w+3)F_n^2F_m^2$:
\begin{align}
\Delta_{L_1}(t,y)&\;\eqdot\;\det\big(yN_\alpha-I\big)\;=\;y^2+(T-2)\,y+1,\label{eq:level1}\\
\Delta_{L_2}(t,x)&\;\eqdot\;\det\big(xN_\alpha^2-I\big)\;=\;x^2-\big((T-2)^2-2\big)\,x+1.\label{eq:level2}
\end{align}
In particular: \textup{(i)} the covering factorization \textup{(}Proposition~\ref{prop:covering}\textup{)} holds by the matrix identity $(yN_\alpha-I)(yN_\alpha+I)=y^2N_\alpha^2-I$; \textup{(ii)} both polynomials are visibly palindromic in the axis variable \textup{(}Theorem~\ref{thm:normalform}\textup{)}, by unimodularity of $N_\alpha$; \textup{(iii)} Torres at level $1$ gives $\Delta_{L_1}(t,1)=T=\qint3\cdot F_n^2F_m^2$, so $\Conway_{K_1}=F_n^2F_m^2$, the Hartley--Kawauchi square of the quotient knot \textup{(}Lemma~\ref{lem:HKsquare}\textup{)}, with $Q_1(u,v)=v+(T-2)$, $R_1=Q_1(u,-2)=T-4$, $R_1(0)=-1=\eps_3$, $\widehat R_1=4-T$; \textup{(iv)} Torres at level $2$ gives
\[
\Delta_{L_2}(t,1)=4-(T-2)^2=T\,(4-T)=\qint3\cdot F_n^2F_m^2\big(4-(w+3)F_n^2F_m^2\big),
\]
recovering the formula of \cite{CM} for $\Conway_K$ and instantiating Theorem~\ref{thm:telescope} as $\Conway_K=\Conway_{K_1}\cdot\widehat R_1$.
\end{theorem}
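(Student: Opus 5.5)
We compute both sides of \eqref{eq:level1} and \eqref{eq:level2} using the Burau formula \eqref{eq:buraupoly}, then read off (i)--(iv) as direct consequences.

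For level $1$, the quotient knot $K_1$ is the closure of $\alpha$ about $C_2$, with $k=3$. By \eqref{eq:buraupoly},
\[
\Delta_{L_1}(t,y)\eqdot\det(yN_\alpha-I).
\]
Since $N_\alpha\in SL_2$, the characteristic-polynomial identity for a $2\times2$ unimodular matrix $N$ gives
\[
\det(yN-I)=y^2\det N-y\operatorname{tr}N+1=y^2-y\operatorname{tr}N+1.
\]
Proposition~\ref{prop:trace} supplies $\operatorname{tr}N_\alpha=2-T$, which yields $y^2+(T-2)y+1$.

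For level $2$, $K$ is the closure of $\alpha^2$, so $\bar\psi(\beta)=N_\alpha^2$. This matrix is again unimodular, and $\operatorname{tr}(N^2)=(\operatorname{tr}N)^2-2$ by Cayley--Hamilton. Hence
\[
\det(xN_\alpha^2-I)=x^2-\big((2-T)^2-2\big)x+1,
\]
which is the formula \eqref{eq:level2}. The only points needing care are that \eqref{eq:buraupoly} identifies the axis meridian variable with $x$ (respectively $y$) up to the stated $t$-shift, which is absorbed by $\eqdot$, and that the scalar normalization $a_i=A_i/(is)$ changes nothing because $\alpha$ is a commutator.

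The remaining claims follow from these two formulas.

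\textbf{(i)} The product $(yN_\alpha-I)(yN_\alpha+I)=y^2N_\alpha^2-I$ gives, after taking determinants, $\det(yN_\alpha-I)\det(yN_\alpha+I)=\det(xN_\alpha^2-I)$ with $x=y^2$. Here $\det(yN_\alpha+I)=\det(-yN_\alpha-I)$ because $\det(-I)=1$ in dimension $2$. This is exactly Proposition~\ref{prop:covering}.

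\textbf{(ii)} Palindromicity is visible: dividing by $y$ gives $y+y^{-1}+T-2$. So $Q_1(u,v)=v+(T-2)$, and similarly $x^{-1}\Delta_{L_2}=(x+x^{-1})-((T-2)^2-2)$.

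\textbf{(iii)} Setting $y=1$ gives $T=(w+3)F_n^2F_m^2$. Torres \eqref{eq:torres} with $\ell=3$ and $\qint3=w+3$ then gives $\Conway_{K_1}=F_n^2F_m^2$. Cancelling $\qint3$ is legitimate since both sides lie in $\Z[w]$ and agree at $w=0$. Setting $v=-2$ gives $R_1=T-4$, with constant term $3\cdot1-4=-1=\eps_3$.

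\textbf{(iv)} Setting $x=1$ gives $2-(T-2)^2+... $; more precisely, $2-\big((T-2)^2-2\big)=4-(T-2)^2=T(4-T)$. Dividing by $\qint3$ via Torres yields $\Conway_K=F_n^2F_m^2(4-T)=\Conway_{K_1}\widehat R_1$. To fix the sign, compare constant terms: the value at $w=0$ is $1\cdot(4-3)=1$.

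The main obstacle is justifying \eqref{eq:buraupoly}, including the correct matching of the axis variable, and in particular checking that the reduced Burau representation with these conventions computes the two-variable polynomial. This step is taken as given in \S\ref{app:buraugeneral}. Everything else is $2\times2$ linear algebra combined with Proposition~\ref{prop:trace}.
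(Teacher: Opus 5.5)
Your proposal is correct and is essentially the paper's own proof. You identify both polynomials through \eqref{eq:buraupoly}, expand $\det(yN-I)=y^2\det N-y\operatorname{tr}N+1$ for unimodular $N$ using Proposition~\ref{prop:trace} and $\operatorname{tr}(N^2)=(\operatorname{tr}N)^2-2$, and then read off (i)--(iv) by specialization, fixing signs at $w=0$.

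One side remark needs correcting, and the same slip appears in \S\ref{app:buraugeneral}. Changing the lift of the monodromy by $t^{k}$ substitutes $x\mapsto xt^{k}$. That is a change of the axis variable, not multiplication by a unit: for instance $x^2t^{2k}-xt^{k}\operatorname{tr}N+1$ is not an associate of $x^2-x\operatorname{tr}N+1$. So the shift is not absorbed by $\eqdot$. What actually makes \eqref{eq:buraupoly} hold is that the standard Burau lift fixes the lifted boundary of $D_k$, so $x$ is the class of $\{p\}\times S^1$, $p\in\partial D_k$, which is an honest meridian of $C_2$. Since you take \eqref{eq:buraupoly} as given, your argument is unaffected.
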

 
\begin{proof}
Since $N_\alpha\in SL_2$, $\det(yN_\alpha-I)=y^2\det N_\alpha-y\operatorname{tr}N_\alpha+1=y^2-(2-T)y+1$ by Proposition~\ref{prop:trace}, and $\operatorname{tr}(N_\alpha^2)=(\operatorname{tr}N_\alpha)^2-2=(T-2)^2-2$ with $\det N_\alpha^2=1$. The identification with the Alexander polynomials is \eqref{eq:buraupoly}, applied to $\alpha$ at level $1$ and $\beta=\alpha^2$ at level $2$; note that the level-$2$ monodromy is the square of the level-$1$ monodromy precisely because the $\pi$-rotation about the axis carries the page at angle $\varphi$ to the page at $\varphi+\pi$, identifying $K_1$ with the closure of the half-word. The listed consequences are immediate computations, matching \S\ref{subsec:fibonacci} line by line.
\end{proof}
 
\begin{remark}
The computations in this section were heavily aided by Claude, though all carefully checked.
Rather than directly computing the elements of the products of matrices appearing in the Burau representation, as was done in \cite{CM}, 
the trace calculus applied to $\big(\operatorname{tr}a_1,\operatorname{tr}a_2,\operatorname{tr}(a_1a_2)\big)$ is used instead.
\end{remark}

\section{Five-strand examples}\label{app:fivestrand}

The three-strand computation of the previous section is structurally incapable of
producing an interesting $E$: there the monodromy is a $2\times2$ unimodular matrix,
$Q_1=v-\operatorname{tr}\bar\psi$ has a single free coefficient, and
$E=\tfrac14\big(Q_1(2)-Q_1(-2)\big)=1$ identically, whatever the braid word. In this
section we work one rung up the ladder. Note first that if $\widehat\beta$ is
braided about the axis then $\ell=\operatorname{lk}(\widehat\beta,C_2)$ equals the number
of strands, so an equivariant braid position for an $\ell$-rotary knot lives in
$B_\ell$; the first case with room for a nontrivial $E$ is therefore $\ell=5$, where the
reduced Burau matrices are $4\times4$. 

Similar computations to the previous section yield
Table~\ref{table:fivestrand}, which lists the invariants for some $5$ strand examples. Recall for a braid $\gamma$ that $\bar \gamma$ is the reflection through the plane of the paper, i.e. switches all crossings. Also note that the permutation associated with $\gamma$ must be a 5 cycle for the closure to be a knot. These computations were performed by Claude using several supplementary lemmas not included here, and have not been independently verified.

\begin{table}[ht]\small
\renewcommand{\arraystretch}{1.35}
\begin{tabular}{@{}llll@{}}
\hline
$\gamma$ & $E$ & $M$ & $\Conway_{K_1}$\\
\hline
$\sigma_1\sigma_2\sigma_3\sigma_4$ & $w+1$ & $w^2+3w+3$ & $1$\\
$\sigma_1\sigma_2^{-1}\sigma_3\sigma_4$ & $2w+1$ & $w^2+w+3$ & $1$\\
$\sigma_1^{3}\sigma_2\sigma_3\sigma_4$ & $2w+1$ & $w^4+7w^3+16w^2+11w+3$ & $(w+1)^2$\\
$\sigma_4\sigma_2^{-1}\sigma_3^{-2}\sigma_1^{-1}\sigma_3^{-1}$ & $w^3+5w^2+7w+1$ & $w^4+5w^3+6w^2+w+3$ & $(w+1)^2$\\
$\sigma_2^{-1}\sigma_4^{-1}\sigma_1^{-1}\sigma_3^{-1}\sigma_2\sigma_3^{-1}$ & $w^3-w^2-w+1$ & $w^4+w^3-2w^2-3w+3$ & $(w-1)^2$\\
\hline
\end{tabular}
\medskip
\caption{Five-strand examples $\alpha=\gamma\bar\gamma$, $K=\widehat{\alpha^2}$. In each
row $\qint5\,\Conway_K=M^2-4E^2$; the resulting Conway polynomials are, in order,
$w^2+w+1$;\; $w^2-3w+1$;\; $(w+1)^2(w^4+7w^3+16w^2+7w+1)$;\;
$(w+1)^2(w^4+3w^3-4w^2-13w+1)$;\; $(w-1)^4(w^2+w+1)$.}
\label{table:fivestrand}
\end{table}


\begin{thebibliography}{ABC}

\bibitem[Agl]{Agle} K.~Agle, \emph{Alexander and Conway polynomials of Torus knots}, Master’s Thesis, University of
Tennessee, 2012. \url{http://trace.tennessee.edu/utk_gradthes/1127}

\bibitem[Bur]{Bur} W.~Burau, \emph{\"Uber Zopfgruppen und gleichsinnig verdrillte Verkettungen}, Abh. Math. Sem. Univ. Hamburg \textbf{11} (1936), 179--186.

\bibitem[BC]{BC} K.~Boyle and W.~Chen, \emph{Negative amphichiral knots and the half-Conway polynomial}, Rev. Mat. Iberoam. 40, No. 2, 581--622 (2024). 

\bibitem[Con]{Con} J.~Conant, \emph{Chirality and the Conway polynomial}, Top. Proc. 30, no 1 (2006) pp. 153--162.

\bibitem[CM]{CM} J.~Conant and V.~Manathunga, \emph{The Conway polynomial and amphicheiral knots}, J. Knot Theory Ramifications \textbf{26} (2017), no.~5, Article ID 1750027.

\bibitem[DL]{DL} J.~Dinkelbach and B.~Leeb, \emph{Equivariant Ricci flow with surgery and applications to finite group actions on geometric 3-manifolds}, Geom. Topol. \textbf{13} (2009), 1129--1173.

\bibitem[FV]{FV} S.~Friedl and S.~Vidussi, \emph{A survey of twisted Alexander polynomials}, in: The Mathematics of Knots, Contrib.\ Math.\ Comput.\ Sci.\ 1, Springer, Heidelberg, 2011, pp.~45--94. 

\bibitem[EHW]{EHW} N.~Ermotti, C.~V.~Quach Hongler and C.~Weber, \emph{On the Kawauchi conjecture about the Conway polynomial of achiral knots}, J. Knot Theory Ramifications 21, No. 9, Article ID 1250092, 9 p. (2012). 

\bibitem[H1]{H1} R.~Hartley, \emph{Invertible amphicheiral knots}, Math. Ann. \textbf{252} (1979/80), 103--109. 

\bibitem[H2]{H2} R.~Hartley, \emph{Knots with free period}, Canad. J. Math. \textbf{33} (1981), 91--102. 

\bibitem[HK]{HK} R.~Hartley and A.~Kawauchi, \emph{Polynomials of amphicheiral knots}, Math. Ann. \textbf{243} (1979), 63--70.

\bibitem[HKL]{HKL} C.~Herald, P. Kirk, C. Livingston, \emph{
Metabelian representations, twisted Alexander polynomials, knot slicing, and mutation.}
Math. Z. 265, No. 4, 925--949 (2010). 

\bibitem[Kaw]{Kaw} A.~Kawauchi, \emph{H-cobordism. I. The groups among three dimensional homology handles.} Osaka J. Math. 13 (1976), no. 3, 567--590.

\bibitem[Mil]{Mil} J.~Milnor, \emph{A duality theorem for Reidemeister torsion}, Ann. of Math. (2) \textbf{76} (1962), 137--147.

\bibitem[Mor]{Mor} H.~R.~Morton, \emph{Threading knot diagrams}, Math. Proc. Cambridge Philos. Soc. \textbf{99} (1986), 247--260.

\bibitem[Tor]{Tor} G.~Torres, \emph{On the Alexander polynomial}, Ann. of Math. (2) \textbf{57} (1953), 57--89.

\bibitem[Tur]{Tur} V.~Turaev, \emph{Introduction to Combinatorial Torsion}, Lectures in Mathematics ETH Z\"urich, Birkh\"auser, 2001.

\end{thebibliography}
\end{document}